\newbox\mybox
\numberwithin{equation}{section}
\newtheorem{theorem}{Theorem}[section]
\newtheorem{lemma}[theorem]{Lemma}
\newtheorem{proposition}[theorem]{Proposition}
\newtheorem{corollary}[theorem]{Corollary}
\theoremstyle{remark}
\newtheorem{remark}[theorem]{Remark}
\theoremstyle{definition}
\newtheorem*{assumption*}{\assumptionnumber}
\providecommand{\assumptionnumber}{}
\renewcommand\@biblabel[1]{\textbf{#1.}} 
\DeclareMathOperator{\sgn}{sgn}
\DeclareMathOperator{\tr}{tr}
\DeclareMathOperator{\dist}{dist}
\newcommand{\ang}[1]{\langle #1 \rangle}
\newcommand{\interior}[1]{%
  {\kern0pt#1}^{\mathrm{o}}%
}
\DeclareRobustCommand\widecheck[1]{{\mathpalette\@widecheck{#1}}}
\def\@widecheck#1#2{%
    \setbox\z@\hbox{\m@th$#1#2$}%
    \setbox\tw@\hbox{\m@th$#1%
       \widehat{%
          \vrule\@width\z@\@height\ht\z@
          \vrule\@height\z@\@width\wd\z@}$}%
    \dp\tw@-\ht\z@
    \@tempdima\ht\z@ \advance\@tempdima2\ht\tw@ \divide\@tempdima\thr@@
    \setbox\tw@\hbox{%
       \raise\@tempdima\hbox{\scalebox{1}[-1]{\lower\@tempdima\box
\tw@}}}%
    {\ooalign{\box\tw@ \cr \box\z@}}}
\begin{document}

\title{\textbf{A Szeg\H{o} limit theorem\\ for translation-invariant operators on polygons}} 

\author{\textsc{Bernhard Pfirsch}} 

\subjclass[2010]{Primary 47B35; Secondary 45M05, 47B10, 58J50}
\keywords{Szeg\H{o}-type trace asymptotics, Wiener--Hopf operators, polygons, heat trace anomaly}

\date{\today} 

\newcommand{\Addresses}{{
  \bigskip

\textsc{Department of Mathematics,
University College London,
Gower Street,
London
WC1E 6BT, UK.}\par\nopagebreak
  \textit{e-mail:} \texttt{bernhard.pfirsch.15@ucl.ac.uk}
}}
\begin{abstract}
We prove Szeg\H{o}-type trace asymptotics for translation-invariant operators on polygons. More precisely, consider a Fourier multiplier $A=\mathcal{F}^\ast \sigma \mathcal{F}$ on $\mathsf{L}^2(\mathbb{R}^2)$ with a sufficiently decaying, smooth symbol $\sigma:\mathbb{C}\to\mathbb{C}$. Let $P\subset \mathbb{R}^2$ be the interior of a polygon and, for $L\geq 1$, define its scaled version $P_L:=L\cdot P$. Then we study the spectral asymptotics for the operator $A_{P_L}=\chi_{P_L}A\chi_{P_L}$, the spatial restriction of $A$ onto $P_L$: for entire functions $h$ with $h(0)=0$ we provide a complete asymptotic expansion of $\tr h(A_{P_L})$ as $L\to\infty$. These trace asymptotics consist of three terms that reflect the geometry of the polygon. If $P$ is replaced by a domain with smooth boundary, a complete asymptotic expansion of the trace has been known for more than 30 years. However, for polygons the formula for the constant order term in the asymptotics is new. In particular, we show that each corner of the polygon produces an extra contribution; as a consequence, the constant order term exhibits an anomaly similar to the heat trace asymptotics for the Dirichlet Laplacian.
\end{abstract}

\maketitle 


\section{Introduction}

Let $A$ be a bounded and translation-invariant operator on $\mathsf{L}^2(\mathbb{R}^d)$. In other words, consider a Fourier multiplier
\begin{align*}
A=A(\sigma)=\mathcal{F}^\ast \sigma \mathcal{F}
\end{align*}
with a bounded, complex-valued \textit{symbol} $\sigma\in\mathsf{L}^\infty(\mathbb{R}^d)$. Here, the Fourier transform $\mathcal{F}$ is chosen to be unitary on $\mathsf{L}^2(\mathbb{R}^d)$. For any measurable set $\Omega\subseteq\mathbb{R}^d$, introduce the spatial restriction of the operator $A$ onto $\Omega$,
\begin{align*}
A_{\Omega}:=A_{\Omega}(\sigma):=\chi_{\Omega}\mathcal{F}^\ast \sigma \mathcal{F}\chi_{\Omega},
\end{align*}
where $\chi_\Omega$ denotes the characteristic function for the set $\Omega$ and both $\chi_\Omega$ and $\sigma$ are interpreted as multiplication operators on $\mathsf{L}^2(\mathbb{R}^d)$. In analogy to the one-dimensional case, we refer to such an operator $A_\Omega$ as \textit{(multidimensional) truncated Wiener-Hopf operator}.
Throughout this paper, the variable $L\geq 1$ is used as a scaling parameter and
\begin{align*}
\Omega_L:=L\cdot\Omega
\end{align*}
denotes the scaled version of the set $\Omega$. For the sake of discussion, assume that the set $\Omega\subset\mathbb{R}^d$ is bounded and the symbol $\sigma$ of $A$ belongs to the class of smooth and rapidly decreasing functions, i.e.~$\sigma\in\mathcal{S}(\mathbb{R}^d)$. Moreover, let $h:\mathbb{C}\to\mathbb{C}$ be an entire function with $h(0)=0$. Under these assumptions, it is well-known that the operator $h(A_{\Omega})$ is trace class and the function $h$ is also called \textit{test function}. If in addition $\partial \Omega$ is smooth, then \cite{Widom1985} provides a complete asymptotic expansion of 
\begin{align}\label{eq:trhAOmegaL}
\tr h(A_{\Omega_L})
\end{align}
as $L\to\infty$. More precisely, for any $K\geq -d$ there exist constants $\mathcal{B}_j=\mathcal{B}_j(\Omega,h,\sigma)$ such that
\begin{align}\label{eq:trasymptsmoothboundary}
\tr h(A_{\Omega_L})=\sum\limits_{j=-K}^d L^j\mathcal{B}_j + o(L^{-K}),
\end{align}
as $L\to\infty$. These trace asymptotics for truncated Wiener-Hopf operators can be seen as a continuous multi-dimensional analogue of Szeg\H{o}'s famous limit theorem for Toeplitz matrices, see \cite{Szego1952}. While 
\begin{align}\label{eq:Bd}
\mathcal{B}_d=\frac{|\Omega|}{(2\pi)^d}\int\limits_{\mathbb{R}^d}d\xi\,(h\circ \sigma)(\xi)
\end{align}
only depends on $\Omega$ through its volume $|\Omega|$, the coefficients $\mathcal{B}_j$ for $j\leq d-1$ contain geometric information on the boundary $\partial \Omega$: $\mathcal{B}_{d-1}$ arises from a hyperplane approximation at each point of $\partial\Omega$ and $\mathcal{B}_{d-2}$ contains the curvature and the second fundamental form of $\partial\Omega$, see also \cite{Roccaforte1984}. As a general principle, the coefficient $\mathcal{B}_{d-k}$ depends on $C^k$-attributes of $\partial\Omega$; more precise formulae in terms of the geometric content are collected in  \cite{Roccaforte2013}.

The asymptotics of \eqref{eq:trhAOmegaL} have also been studied intensively  for non-smooth symbols $\sigma$, even though, for $d\geq 2$, only two terms are known in this situation: for a symbol with a jump discontinuity, the leading order term in \eqref{eq:trasymptsmoothboundary} remains unaffected, whereas the sub-leading term gets enhanced to order $\log(L)L^{d-1}$. The one-dimensional case is covered in \cite{LandauWidom, Widom1981} and the works \cite{Sobolev2010, Sobolev2013, Sobolev2016, Sobolev2017} provide the extension to any dimension. An interdisciplinary interest in \eqref{eq:trhAOmegaL} originates in its relation to the bipartite entanglement entropy for a non-interacting Fermi gas, see \cite{GioevKlich2006, Helling2009, LeschkeSobolevSpitzer2014, LeschkeSobolevSpitzer2016}. In this context, recent literature contains asymptotic formulae for a generalised version of the trace \eqref{eq:trhAOmegaL}: the operator $A$ is replaced by $a(H)$ where $H=-\Delta +V$ is a Schrödinger operator with a real-valued potential $V$ and $a:\mathbb{R}\to\mathbb{R}$ is a bounded function, for instance a step function. Here, the focus lies on (random) ergodic potentials in \cite{PasturSlavin2014, KirschPastur2014, ElgartPastur2016, Dietlein2018} and periodic potentials in \cite{PfirschSobolev2018}.
\vspace{0.3cm}

We are interested in the asymptotic behaviour of \eqref{eq:trhAOmegaL} for smooth symbols $\sigma$ but for a set $\Omega$ with non-smooth boundary. As before, assume that $\sigma\in\mathcal{S}(\mathbb{R}^d)$ and that $h$ is an entire function with $h(0)=0$. In \cite{Widom1960} the author dealt with polytopes $\Omega$ and proved a two-term asymptotic expansion of the trace \eqref{eq:trhAOmegaL}. Recently, this result was extended to a larger class of domains, see \cite{Sobolev2018}. Namely, let $\Omega$ be a bounded Lipschitz region with piecewise $C^1$-boundary. Then \cite{Sobolev2018} contains the asymptotics
\begin{align}\label{eq:trhAOmegaLLipschitz}
\tr h(A_{\Omega_L})=L^d \mathcal{B}_d + L^{d-1}\mathcal{B}_{d-1} + o(L^{d-1}),
\end{align} 
as $L\to\infty$, where the coefficients $\mathcal{B}_j=\mathcal{B}_j(\Omega,h,\sigma)$, $j=d,d-1$, are given via the same formulas as in the smooth boundary case. The coefficient $\mathcal{B}_{d}$ agrees with \eqref{eq:Bd} and a formula for $\mathcal{B}_{d-1}$ can be found, for instance, in \cite[Thm. 1.1]{Roccaforte1984}. In particular, one observes that the edges (or if $d=2$ the corners) of $\Omega$ do not enter the trace asymptotics up to order $L^{d-1}$. In the special case of cubes $\Omega$, \cite[Thm. 2.2]{Dietlein2018} actually implies complete asymptotics for \eqref{eq:trhAOmegaL}, consisting of $d+1$ terms. However, the latter result is established in the more general framework of $\mathbb{Z}^d$-ergodic operators. This entails an exclusively abstract formulation of the asymptotic coefficients, which makes it difficult to relate them to the smooth boundary case. In addition, \cite[Thm. 2.2]{Dietlein2018} makes for the Wiener-Hopf case unnecessary symmetry assumptions; for instance, it is applicable to radially symmetric symbols $\sigma$. 

Similar results have been obtained in the discrete setting, where $A_{\Omega_L}$ is replaced by the doubly-infinite $d$-dimensional Toeplitz matrix $T$ restricted to a scaled lattice subset $\Lambda_L\subset \mathbb{Z}^d$. For polytopes $\Lambda$, the work \cite{Doktorskii1984} provides a two-term asymptotic formula for $\tr h(T_{\Lambda_L})$, analogous to the result in \cite{Widom1960}. When $\Lambda$ is a cuboid, the authors of \cite{Seghier1986} and \cite{Thorsen1996} proved a $(d+1)$-term asymptotic formula for $\tr h(T_{\Lambda_L})$, under the additional assumption that the symbol of the Toeplitz matrix allows a specific factorisation. In \cite{Kateb2000} these results were recovered and further insights were given on the inverses of Toeplitz matrices on convex polytopes. Moreover, the recent work \cite{Rinkel2014} treats triangles $\Lambda\subset \mathbb{Z}^2$ and provides a two-term asymptotic formula for $\tr T_\Lambda^{-1}$ with a new formula for the sub-leading coefficient.

In this paper, our objective is to investigate further the term of order $L^{d-2}$ in \eqref{eq:trhAOmegaLLipschitz}. We restrict ourselves to dimension two and deal with the case that $\Omega=P\subset\mathbb{R}^2$ is the interior of a polygon. By the latter we mean that $P$ is bounded and $\partial P$ is the finite disjoint union of piecewise linear, closed  curves; we do not require that $P$ be (simply) connected or convex. In particular, and in contrast to all previous works on the complete asymptotics of \eqref{eq:trhAOmegaL}, we shall deal with corners of any angle. With $h$ as above and slightly relaxed assumptions on $\sigma$ we obtain complete asymptotics for $\tr h(A_{P_L})$, consisting of three terms, see Theorem \ref{thm:abstractasympt}. More precisely, we provide constants $c_j=c_j(P,h,\sigma)$ such that
\begin{align}\label{eq:asympttrpolygonintro}
\tr h(A_{P_L})=L^2c_2+Lc_1+c_0+\mathcal{O}(L^{-\infty}),
\end{align}
as $L\to\infty$. As it can be inferred from formula \eqref{eq:trhAOmegaLLipschitz}, the coefficient $c_2$ incorporates the area of the polygon $P$ and $c_1$ depends on the lengths of its edges and their directions. However, our main focus lies on the constant order coefficient $c_0$, which contains contributions from each corner of the polygon. In Theorem~\ref{thm:abstractasympt} we provide a formula for $c_0$ given in terms of abstract traces, similarly to \cite[Thm. 2.2]{Dietlein2018}. Yet, in the polygon case $c_0$ includes additional terms due to the presence of non-parallel edges. Furthermore, we compute $c_0$ explicitly as a function of the polygon's interior angles for radially symmetric symbols $\sigma$ and quadratic test functions $h$, see Theorem \ref{thm:rotsymmasymptsquare}. As a consequence, one can compare $c_0$ with the corresponding coefficient in the smooth boundary case and we obtain the following result: for a two-dimensional domain $\Omega$, one can determine from the constant order term of the trace asymptotics \eqref{eq:trhAOmegaLLipschitz} whether $\Omega$ has a smooth boundary or it is a polygon, see Corollary \ref{corollary}. In addition, the coefficient $c_0$ for the polygon $P$ can not be obtained from \eqref{eq:trasymptsmoothboundary} via approximation of $P$ by domains with smooth boundary. This anomaly resembles the analogous result for the constant order term in the heat trace asymptotics for the Dirichlet Laplacian on a two-dimensional domain with corners, see~\cite{MazzeoRowlett2015}.

A few remarks on the structure of the paper are in order. We start by formulating our main results: Theorems \ref{thm:abstractasympt} and \ref{thm:coefficients} state the asymptotics \eqref{eq:asympttrpolygonintro} with various formulae for the coefficients $c_j$ and Theorem \ref{thm:rotsymmasymptsquare} deals with the radially symmetric case. The trace norm estimates that enter the proofs of Theorems~\ref{thm:abstractasympt} and \ref{thm:coefficients} are collected in Section~\ref{sec:trnormestimates}. In Section~\ref{sec:redasymptcorners} we apply these trace norm bounds to extract the leading order term of the asymptotics \eqref{eq:asympttrpolygonintro}. Moreover, we reduce the remaining part to individual corner contributions, which only depend on the corner angle and the lengths and directions of the enclosing edges. The trace asymptotics corresponding to a single corner of the polygon are provided in Section \ref{sec:sectopslocaltrasympt}, which completes the proof of Theorem~\ref{thm:abstractasympt}. The proofs of Theorems~\ref{thm:coefficients} and \ref{thm:rotsymmasymptsquare} can be found in Sections \ref{sec:evalcoeff} and \ref{sec:rotsymmsymbols}.   

To conclude the introduction, we fix some general notation that will be applied throughout the paper. 
If $f,g$ are non-negative functions, we write $f\lesssim g$ or $g\gtrsim f$ if $f\leq Cg$ for some constant $C>0$. This constant will always be independent of the scaling parameter $L$, but it might depend on the test function $h$, the symbol $\sigma$, and the geometry of the polygon $P$. We will comment on its explicit dependence whenever necessary. For $x\in\mathbb{R}^d$, we use the notation $\ang{x}:=(1+|x|^2)^{1/2}$, where $|\cdot|$ is the standard Euclidean norm. Moreover, $Q_x$ denotes the (closed) unit cube centred at $x$ and $B_r(x)$ is the (closed) unit ball of radius $r>0$ around $x$ (with respect to $|\cdot|$).

\textit{Acknowledgement.} The author is very grateful to Adrian Dietlein and Alexander V. Sobolev for illuminating discussions and valuable comments on the manuscript.

\section{Results}\label{sec:results}

Let $h:\mathbb{C}\to\mathbb{C}$ be an entire function with $h(0)=0$ and consider a symbol $\sigma\in\mathsf{W}^{\infty,1}(\mathbb{R}^2)$, see \eqref{eq:defWinfty1} for the definition. These assumptions will be sufficient to obtain the asymptotic trace formula \eqref{eq:asympttrpolygonintro} with well-defined coefficients $c_j=c_j(P,h,\sigma)$. In order to write out the formulas for the coefficients we need to fix some notation for the polygon $P$. 

\subsection{Notation for the polygon $\boldsymbol{P}$ and coefficients in the asymptotics}
\label{subsec:notationPandcoeff}
Let $\Xi(P)\subset \mathbb{R}^2$ denote the set of vertices of $P$ and  $\mathcal{E}(P)$ the set of edges of $P$. In the following we specify the contribution of each edge $E\in\mathcal{E}(P)$ and each corner at $X\in\Xi(P)$ to the asymptotics \eqref{eq:asympttrpolygonintro}.

First, fix an edge $E\in\mathcal{E}(P)$. Let $\nu_E$ be its inward pointing unit normal vector and let $\tau_E$ be the unit tangent vector such that the frame $(\tau_E,\nu_E)$ has the standard orientation in $\mathbb{R}^2$. This induces an orientation on $\partial P$. Introduce the half-space
\begin{align}\label{eq:defHE}
H_E:=\lbrace y\in\mathbb{R}^2: y\cdot \nu_E\geq 0\rbrace,
\end{align}
and the semi-infinite strip of unit width,
\begin{align}\label{eq:defSE}
S_E:=\lbrace a\tau_E + b\nu_E: (a,b)\in [0,1]\times [0,\infty)\rbrace\subset H_E.
\end{align}
We also label the interior angles between $E$ and its adjacent edges by $\gamma_E^{(1)}$ and $\gamma_E^{(2)}$. For definiteness the enumeration is chosen in accordance with the orientation of $\partial P$. However, the latter is not of much relevance as we will mainly be interested in a symmetric function of the angles, $F:\mathcal{E}(P)\to\mathbb{R}$,
\begin{align}\label{eq:defF}
F(E):=-\cot(\gamma_E^{(1)})-\cot(\gamma_E^{(2)}).
\end{align}
Note that $F(E)=0$ if and only if $\gamma_E^{(1)}+\gamma_E^{(2)}\in \lbrace \pi, 2\pi, 3\pi\rbrace$, i.e.~if and only if the edges adjacent to $E$ are parallel.
Defining also the function 
\begin{align}\label{eq:defh1}
h_1(z):=h(z)-zh'(0),
\end{align}
we introduce the following coefficients corresponding to the edge $E$, which are finite under our assumptions on $h$ and $\sigma$, see also Theorem \ref{thm:abstractasympt}. We set
\begin{align}\label{eq:defa1nuE}
a_1(\nu_E):=\tr\big(\chi_{S_E}\big[h_1(A_{H_E})-h_1(A)\big]\big),
\end{align}
with $S_E$ and $H_E$ as in \eqref{eq:defHE}, \eqref{eq:defSE}. Note that the strip $S_E$ on the right-hand side of \eqref{eq:defa1nuE} may actually be shifted along the edge $E$, leaving the value of $a_1(\nu_E)$ unchanged since the operator $h_1(A_{H_E})-h_1(A)$ is translation-invariant in the direction $\tau_E$. Similarly, we define the coefficient
\begin{align}\label{eq:defa0nuE}
a_0(\nu_E):=\tr\big(\chi_{S_E}M(x\cdot\nu_E)\big[h_1(A_{H_E})-h_1(A)\big]\big),
\end{align} 
where $M(x\cdot\nu_E)$ is the multiplication operator
\begin{align}
[M(x\cdot\nu_E)f](x):=(x\cdot\nu_E) f(x),
\end{align}
for any function $f:\mathbb{R}\to\mathbb{C}$. Clearly, also the operator $M(x\cdot\nu_E)\big[h_1(A_{H_E})-h_1(A)\big]$ is invariant with respect to translations along the edge $E$.

Fix now a vertex $X\in\Xi(P)$. Its adjacent edges are named $E^{(1)}(X)$ and $E^{(2)}(X)$, where the enumeration is again chosen according to the orientation of $\partial P$. Corresponding to the vertex $X$ we have the two half-spaces
\begin{align}\label{eq:defHjX}
H^{(j)}(X):=H_{E^{(j)}(X)}, \ j=1,2,
\end{align}
compare with \eqref{eq:defHE}. Moreover, let $\gamma_X\in(0,\pi)\cup(\pi,2 \pi)$ denote the interior angle at $X$. In the following, we distinguish convex and concave corners of the polygon, employing the notation
\begin{align*}
\Xi_{\lessgtr}(P):=\lbrace X\in \Xi(P): \gamma_X\lessgtr \pi\rbrace.
\end{align*}
Define the semi-infinite sector modelling the corner at $X\in\Xi(P)$ by 
\begin{align}\label{eq:defC(X)}
C(X):=\begin{cases}
H^{(1)}(X)\cap H^{(2)}(X),&\ X\in\Xi_<(P),\\
H^{(1)}(X)\cup H^{(2)}(X),&\ X\in\Xi_>(P).
\end{cases}
\end{align}
If $X\in\Xi_<(P)$, the corner at $X\in\Xi(P)$ or equivalently the sector $C(X)$ is \textit{convex}, otherwise we call it \textit{concave}. We are now ready to introduce coefficients corresponding to vertices $X\in\Xi(P)$. \\[2ex]
If $X\in\Xi_<(P)$, we define
\begin{align}
\label{eq:b0Xconvex}
b_0(X)&:=\tr\big(\chi_{C(X)} \big[h_1(A_{C(X)})-h_1(A_{H^{(1)}(X)})-h_1(A_{H^{(2)}(X)})+h_1(A)\big]\big),
\end{align}
with $C(X)$ and $H^{(j)}(X)$, $j=1,2$, defined in \eqref{eq:defC(X)} and \eqref{eq:defHjX}, respectively.\\[2ex]
If $X\in\Xi_>(P)$, we set
\begin{align}\label{eq:b0Xconcave}
b_0(X)&:=\tr\big(\chi_{H^{(1)}(X)\cap H^{(2)}(X)}\big[h_1(A_{C(X)})-h_1(A)\big]\big)+\tr\big(\chi_{C(X)\setminus H^{(1)}(X)}\big[h_1(A_{C(X)})-h_1(A_{H^{(2)}(X)}\big]\big)\nonumber\\[2ex]
& \hspace{2cm} +\tr\big(\chi_{C(X)\setminus H^{(2)}(X)}\big[h_1(A_{C(X)})-h_1(A_{H^{(1)}(X)}\big]\big).
\end{align}
\subsection{Main result} Our first and main theorem provides a complete asymptotic expansion of $\tr h(A_{P_L})$ and contains formulas for all the coefficients in \eqref{eq:asympttrpolygonintro}.
\begin{theorem}
\label{thm:abstractasympt}
Assume that $\sigma\in \mathsf{W}^{\infty,1}(\mathbb{R}^2)$, see \eqref{eq:defWinfty1}, and let $h:\mathbb{C}\to\mathbb{C}$ be an entire function with $h(0)=0$. Then we have the asymptotic formula
\begin{align}\label{eq:abstractthmasymptformula}
\tr h(A_{P_L})=L^2 c_2 +L c_1 + c_0+\mathcal{O}(L^{-\infty}), 
\end{align}
as $L\to\infty$, with coefficients
\begin{align*}
c_2&=\frac{|P|}{4\pi^2}\int\limits_{\mathbb{R}^2} d\xi\, (h\circ \sigma) (\xi)\\
c_1&=\sum\limits_{E\in \mathcal{E}(P)}|E|\,a_1(\nu_E), \\
c_0&=\sum\limits_{E\in\mathcal{E}(P)}F(E)\,a_0(\nu_E) + \sum\limits_{X\in\Xi(P)}b_0(X).
\end{align*}
In particular, for all $E\in\mathcal{E}(P)$ and $X\in\Xi(P)$, the coefficients $a_1(\nu_E)$, $a_0(\nu_E)$, and $b_0(X)$  are well-defined, see Subsection \ref{subsec:notationPandcoeff} for their definition.
\end{theorem}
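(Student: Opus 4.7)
The plan is to reduce $\tr h(A_{P_L})$ to a bulk contribution producing $L^2 c_2$ and a boundary remainder which, after localization, splits into individual edge and vertex contributions giving $Lc_1$ and $c_0$. I would first peel off the linear part of $h$: using $h(z)=zh'(0)+h_1(z)$ with $h_1$ as in \eqref{eq:defh1}, the linear term contributes $h'(0)\tr A_{P_L}=h'(0)\,L^2|P|(2\pi)^{-2}\!\int\sigma\,d\xi$. For $h_1$ I would compare $h_1(A_{P_L})$ with $\chi_{P_L}h_1(A)\chi_{P_L}$, whose trace is $L^2|P|(2\pi)^{-2}\!\int h_1(\sigma)\,d\xi$; together with the linear piece this gives exactly $L^2c_2$. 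The remaining work concerns
\[ \tr R(P_L),\qquad R(P_L):=h_1(A_{P_L})-\chi_{P_L}h_1(A)\chi_{P_L}, \]
and the crucial point is that $h_1$ has a double zero at $0$, which is what makes $\tr R(P_L)$ a boundary quantity controlled by the trace-norm machinery of Section~\ref{sec:trnormestimates}.

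Next I would use those estimates to establish two localization facts: (i) contributions to $\tr R(P_L)$ from points at distance $\gg 1$ from $\partial P_L$ are $\mathcal{O}(L^{-\infty})$, and (ii) if two measurable sets agree in a neighborhood of $\Lambda$, the corresponding traces $\tr\chi_\Lambda R\chi_\Lambda$ agree up to $\mathcal{O}(L^{-\infty})$. With these in hand I would fix, independently of $L$, a partition $\{U_E\}_{E\in\mathcal{E}(P)}\cup\{V_X\}_{X\in\Xi(P)}$ of a tubular neighborhood of $\partial P$, with each $U_E$ a rectangle along $E$ bounded away from the vertices and each $V_X$ a small fixed neighborhood of $X$. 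Scaling this partition by $L$ and applying (i)--(ii) reduces $\tr R(P_L)$ to a sum of edge and corner contributions modulo a negligible error.

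Inside each scaled edge box $L\cdot U_E$ the polygon agrees with a translate of $H_E$, so translation invariance in the direction $\tau_E$ converts the local contribution to (tangential length)$\,\times\,a_1(\nu_E)$. The tangential length is $L|E|$ minus the two small amounts cut off near the endpoints: a unit-width strip aligned with $E$ near a vertex of interior angle $\gamma$ is a trapezoid whose leftover (signed) triangle has area governed by $-\cot\gamma$, and the normal first moment of $h_1(A_{H_E})-h_1(A)$ against this triangle is precisely $a_0(\nu_E)$. Summing over the two endpoints of each edge produces the factor $F(E)=-\cot\gamma_E^{(1)}-\cot\gamma_E^{(2)}$ from \eqref{eq:defF} and hence the terms $F(E)a_0(\nu_E)$ in $c_0$; the main tangential piece sums to $Lc_1$. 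Inside each scaled corner box $L\cdot V_X$ the polygon coincides with the sector $C(X)$, and the local contribution, once the bulk piece (already counted in $L^2c_2$) and the two edge pieces (already counted in $Lc_1$ and $F(E)a_0(\nu_E)$) are removed, is exactly $b_0(X)$: for convex $X$ one uses the straightforward inclusion-exclusion \eqref{eq:b0Xconvex} corresponding to $\chi_{C(X)}=\chi_{H^{(1)}}\chi_{H^{(2)}}$, while for concave $X$ the identity $\chi_{C(X)}=\chi_{H^{(1)}}+\chi_{H^{(2)}}-\chi_{H^{(1)}}\chi_{H^{(2)}}$ forces the three-term decomposition \eqref{eq:b0Xconcave}.

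The principal obstacle will be precisely the analysis of the sector operators defining $b_0(X)$. Near the tip of the sector translation invariance is lost in both directions, so all decay must come from the double vanishing of $h_1$ at $0$ together with the trace-norm bounds of Section~\ref{sec:trnormestimates}; proving that $\chi_{C(X)}\bigl[h_1(A_{C(X)})-h_1(A_{H^{(1)}})-h_1(A_{H^{(2)}})+h_1(A)\bigr]$ (and its concave analogue) is trace class, uniformly across $\gamma_X\in(0,\pi)\cup(\pi,2\pi)$, is the delicate step. The same machinery should also deliver the normal first-moment bound needed to make $a_0(\nu_E)$ finite, which will come out of the regularity $\sigma\in\mathsf{W}^{\infty,1}(\mathbb{R}^2)$ through repeated integration by parts converting derivatives of $\sigma$ into polynomial decay of the integral kernel of $A$.
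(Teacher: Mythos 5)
Your proposal is correct and follows the same overall architecture as the paper's proof: bulk approximation producing $L^2c_2$, reduction of $h$ to $h_1$, localization of the remainder to a tubular neighbourhood of $\partial P_L$ via the trace-norm estimates of Section~\ref{sec:trnormestimates}, half-space comparison giving $Lc_1$, the $\cot\gamma$-governed geometry near each vertex giving the $F(E)a_0(\nu_E)$ terms, and the sector-operator regularisation (your $\chi_{C(X)}\bigl[h_1(A_{C(X)})-h_1(A_{H^{(1)}})-h_1(A_{H^{(2)}})+h_1(A)\bigr]$ and its concave analogue) giving $b_0(X)$. You also correctly single out the trace-class proof for the regularised sector operator as the delicate step, and the convex/concave inclusion-exclusion matches the paper's \eqref{eq:b0Xconvex}--\eqref{eq:b0Xconcave}.

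The one organisational difference: you partition the boundary tube into edge boxes $U_E$ plus vertex boxes $V_X$, whereas the paper partitions it entirely into corner neighbourhoods $\mathcal{N}(X)$, with each $\mathcal{N}(X)$ containing a rectangular tube of length $|E^{(j)}(X)|/2$ along each of its two adjacent edges (Figure~\ref{fig:2}). The paper's choice has the advantage that all bookkeeping is local to a single vertex: after replacing $P_L$ by the sector $C(X)$ in $\mathcal{N}_L(X)$, the $L$-term comes from the rectangular tubes (Lemma~\ref{lem:Lterm}), and the $\cot\gamma$-factor arises \emph{within} the same corner neighbourhood from the thin wedges $\Gamma^{(j)}$ bridging the rectangular tubes and the sector (Lemma~\ref{lem:notrectangularpart}, Propositions~\ref{prop:convcorners}--\ref{prop:conccorners}). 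In your decomposition the edge boxes $L\cdot U_E$ supply only $L\ell_E\,a_1(\nu_E)$ with $\ell_E<|E|$, so the missing tangential contribution and the $a_0$-terms must both be recovered from the vertex boxes; this is workable but requires matching the edge-box and vertex-box pieces explicitly, and your statement that the leftover-triangle contribution comes from the edge box while the corner contribution has ``the two edge pieces (already counted in $Lc_1$ and $F(E)a_0(\nu_E)$) removed'' is not quite consistent — in a rigorous version, both the residual $L$-term and the $\cot\gamma$-term live in the vertex box, exactly as in the paper. This is a bookkeeping wrinkle, not a gap in the method.
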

\begin{remark}
\begin{enumerate}
\item As we know from the formula \eqref{eq:trhAOmegaLLipschitz}, the corners of the polygon do not affect the two leading coefficients in the asymptotics compared to the smooth boundary case. However, the above formula for $c_0$ shows that the corners do enter the trace asymptotics at the constant order.
\item An edge $E\in\mathcal{E}(P)$ does not contribute to the coefficient $c_0$ if $F(E)=0$, i.e.~if the edges adjacent to $E$ are parallel, see also \eqref{eq:defF}. In particular, all contributions from the edges to $c_0$ vanish if, for instance, $P$ is a parallelogram. As it becomes clear from the proof of the theorem, the edge contributions to $c_0$ are in fact aggregated local contributions from corners of $P$.
\item We emphasise that the coefficients $b_0(X)$ are defined by the two distinct formulas \eqref{eq:b0Xconvex} and \eqref{eq:b0Xconcave}, depending on the type of the corner at $X\in\Xi(P)$. 
\end{enumerate}
\end{remark}
The coefficients $a_1(\nu_E)$ and $a_0(\nu_E)$, which only depend on the half-space operators $h(A_{H_E})$ and the full-space operator $h(A)$, may be rewritten in terms of one-dimensional Wiener-Hopf operators. This is the content of the next theorem. Here, we use the popular notation
\begin{align*}
W(\sigma):=A_{[0,\infty)}(\sigma),
\end{align*}
for $\sigma\in\mathsf{L}^\infty(\mathbb{R})$. As anticipated, the formula \eqref{eq:c1asWienerHopf} for $c_1$ reduces the corresponding formula from the smooth boundary case, see \cite[Thm.]{WIDOM1980}. 
\begin{theorem}\label{thm:coefficients}
Let $\sigma\in\mathsf{W}^{\infty,1}(\mathbb{R}^2)$ and let $h:\mathbb{C}\to\mathbb{C}$ be an entire function with $h(0)=0$. Define, for $E\in \mathcal{E}(P)$ and $t\in \mathbb{R}$, the family of one-dimensional symbols
\begin{align}\label{eq:defsigmaEt}
\mathbb{R}\ni\xi \mapsto\sigma_{E,t}(\xi):=\sigma(t\tau_E+\xi\nu_E).
\end{align}
Then, for all $E\in\mathcal{E}(P)$, the coefficients $a_1(\nu_E)$ and $a_0(\nu_E)$ in Theorem \ref{thm:abstractasympt} may be rewritten as
\begin{align}\label{eq:a1nuEasWienerHopf}
a_1(\nu_E)&=\frac{1}{2\pi}\int\limits_\mathbb{R}dt\, \tr\big[h\big\lbrace W(\sigma_{E,t})\big\rbrace-W(h\circ\sigma_{E,t})\big],\\
\label{eq:a0nuEasWienerHopf}
a_0(\nu_E)&=\frac{1}{2\pi}\int\limits_{\mathbb{R}}dt\, \tr\big(M(x)\big[h\lbrace W(\sigma_{E,t})\rbrace-W(h\circ\sigma_{E,t})\big]\big),
\end{align}
where $M(x)$ denotes multiplication by $x$ on $L^2(\mathbb{R})$. In particular, we have that
\begin{align}\label{eq:c1asWienerHopf}
c_1=\sum\limits_{E\in\mathcal{E}(P)}\frac{|E|}{2\pi}\int\limits_\mathbb{R}dt\, \tr\big[h\big\lbrace W(\sigma_{E,t})\big\rbrace-W(h\circ\sigma_{E,t})\big].
\end{align}
\end{theorem}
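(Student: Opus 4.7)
The plan is to reduce the two-dimensional traces defining $a_1(\nu_E)$ and $a_0(\nu_E)$ to one-dimensional Wiener-Hopf traces by means of a partial Fourier transform in the direction of translation-invariance $\tau_E$.

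Fix an edge $E \in \mathcal{E}(P)$. Using the invariance of the defining traces under rigid motions of $\mathbb{R}^2$, I first rotate coordinates so that $\tau_E = e_1$ and $\nu_E = e_2$; in these coordinates $H_E = \mathbb{R}\times[0,\infty)$, $S_E = [0,1]\times[0,\infty)$ and $\sigma_{E,t}(\xi) = \sigma(t,\xi)$. Since both $A$ and $A_{H_E}$ commute with translations in $x_1$, applying the partial Fourier transform $\mathcal{F}_1$ in the first variable yields direct integral decompositions
$$\mathcal{F}_1 A\, \mathcal{F}_1^{\ast} = \int_\mathbb{R}^{\oplus} A(\sigma_{E,t})\, dt, \qquad \mathcal{F}_1 A_{H_E}\, \mathcal{F}_1^{\ast} = \int_\mathbb{R}^{\oplus} P_+ A(\sigma_{E,t}) P_+\, dt,$$
where $P_+ := \chi_{[0,\infty)}$ and $A(\sigma_{E,t})$ denotes the one-dimensional Fourier multiplier with symbol $\sigma_{E,t}$ on $L^2(\mathbb{R}_{x_2})$. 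Since entire functional calculus distributes over direct integrals, the operator $T := h_1(A_{H_E}) - h_1(A)$ is the direct integral of fibres $T(t) := h_1(P_+ A(\sigma_{E,t}) P_+) - h_1(A(\sigma_{E,t}))$.

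Translation invariance of $T$ in $x_1$ forces its Schwartz kernel to take the form $K(x_1 - y_1; x_2, y_2)$, and the inverse Fourier transform relates it to the fibre kernels through $K(0; x_2, x_2) = (2\pi)^{-1} \int_\mathbb{R} T(t)(x_2, x_2)\, dt$. Integrating over $S_E$ and invoking Fubini gives
$$a_1(\nu_E) = \int_0^\infty K(0; x_2, x_2)\, dx_2 = \frac{1}{2\pi} \int_\mathbb{R} dt\, \tr_{L^2(\mathbb{R}_+)}\!\bigl[h_1(W(\sigma_{E,t})) - W(h_1 \circ \sigma_{E,t})\bigr],$$
where I have used $P_+ A(\sigma_{E,t}) P_+ = W(\sigma_{E,t})$ and $P_+ h_1(A(\sigma_{E,t})) P_+ = P_+ A(h_1 \circ \sigma_{E,t}) P_+ = W(h_1 \circ \sigma_{E,t})$ to identify the fibrewise trace. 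Because $h_1(z) - h(z) = -z h'(0)$ is linear, the linear parts cancel in the difference and $h_1(W(\sigma)) - W(h_1 \circ \sigma) = h(W(\sigma)) - W(h \circ \sigma)$, which yields \eqref{eq:a1nuEasWienerHopf}. The derivation of \eqref{eq:a0nuEasWienerHopf} is identical once the additional multiplication operator $M(x \cdot \nu_E) = M(x_2)$ is inserted; it simply pulls an $x_2$-factor past the integration in $x_2$. Finally, \eqref{eq:c1asWienerHopf} follows from Theorem \ref{thm:abstractasympt} by multiplying by $|E|$ and summing over $E \in \mathcal{E}(P)$.

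The principal technical obstacle is to justify the Fubini-type interchange underpinning the trace computation: it requires $T(t)$ (and $M(x_2) T(t)$ in the $a_0$-case), suitably restricted to $L^2(\mathbb{R}_+)$, to be trace class for almost every $t$ and for the fibrewise trace norms to be integrable in $t$. The classical one-dimensional Wiener-Hopf trace estimates of Widom type, recalled in Section \ref{sec:trnormestimates}, supply the trace-class property for each fixed $t$, while the hypothesis $\sigma \in \mathsf{W}^{\infty,1}(\mathbb{R}^2)$ provides, through tangential differentiation, the decay in $t$ needed to secure integrability.
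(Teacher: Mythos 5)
Your proposal follows essentially the same route as the paper: rotate so that $\tau_E=e_1$, partially Fourier-transform in $x_1$ to write $h_1(A_{H_E})-h_1(A)$ as a direct integral of the one-dimensional fibre operators $h_1(W(\sigma_{E,t}))-W(h_1\circ\sigma_{E,t})$, and then express the trace over the strip $S_E$ as $(2\pi)^{-1}\int_\mathbb{R} dt$ of the fibrewise traces, with integrability of $t\mapsto\|B_\alpha(t)\|_1$ coming from the $\mathsf{W}^{\infty,1}$ hypothesis via the Section~\ref{sec:trnormestimates} trace-norm bounds. The only real divergence is in the final trace identification: you go through the diagonal kernel $K(0;x_2,x_2)$ and a pointwise Fourier-inversion at $s=0$, whereas the paper evaluates the quadratic form on product states $\phi\otimes\psi$ and then sums over an orthonormal basis, arriving at $\tr(\chi_S\tilde B_\alpha\chi_S)=(\tr B_\alpha)\widecheck{\;}(0)$ without ever invoking kernels. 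Your kernel route is legitimate but tacitly requires the restricted operator to have a continuous kernel so that ``trace $=$ integral of diagonal'' is valid (a Brislawn-type fact); the paper has that input available (Lemma~\ref{lem:kernelestimates}, and it is actually used in the proof of Lemma~\ref{lem:notrectangularpart}), but you should make it explicit rather than leaving it as an unstated step, since the direct-integral operator $T$ itself is not trace class and only $\chi_{S_E}T$ is.
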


\begin{remark}
The advantage of formulas \eqref{eq:a1nuEasWienerHopf} and \eqref{eq:a0nuEasWienerHopf} lies in the fact that explicit formulas for the traces of one-dimensional Wiener-Hopf operators are known. Assuming for simplicity that $\sigma\in\mathcal{S}(\mathbb{R}^2)$, \cite[Prop. 5.4]{Widom1985} implies that
\begin{align*}
a_1(\nu_E)=\frac{1}{8\pi^3}\int\limits_\mathbb{R}dt\int\limits_{\mathbb{R}} d\xi_1\int\limits_{\mathbb{R}} d\xi_2\,\frac{h(\sigma_{E,t}(\xi_1))-h(\sigma_{E,t}(\xi_2))}{\sigma_{E,t}(\xi_1)-\sigma_{E,t}(\xi_2)}\frac{\sigma_{E,t}'(\xi_2)}{\xi_2-\xi_1},
\end{align*}        
where the integral over $\xi_2$ is interpreted as a Cauchy principal value. Referring to the same proposition, one similarly gets that
\begin{align*}
a_0(\nu_E)=&-\frac{1}{64\pi^2}\int\limits_\mathbb{R}dt\int\limits_{\mathbb{R}} d\xi \, h''(\sigma_{E,t}(\xi))\sigma'_{E,t}(\xi)^2 \\
&- \frac{1}{32\pi^4}\int\limits_\mathbb{R}dt\int\limits_{\mathbb{R}} d\xi_1\int\limits_{\mathbb{R}} d\xi_2\int\limits_{\mathbb{R}} d\xi_3\,\Bigg\lbrace \sum\limits_{k=1}^3\frac{h(\sigma(\xi_k))}{\prod\limits_{j\neq k}[\sigma(\xi_k)-\sigma(\xi_j)]}\Bigg\rbrace\frac{\sigma_{E,t}'(\xi_2)}{\xi_2-\xi_1}\frac{\sigma_{E,t}'(\xi_3)}{\xi_3-\xi_1}.
\end{align*}
\end{remark}
\subsection{The radially symmetric case}
In contrast to the above, the coefficients $b_0(X)$, see \eqref{eq:b0Xconvex} and \eqref{eq:b0Xconcave}, can naturally not be transformed into integrals over traces of one-dimensional fibre operators since they incorporate the truly two-dimensional sector operators $h(A_{C(X)})$. This makes their explicit calculation rather involved. However, we manage to compute the coefficients $b_0(X)$ in the special case when $h$ is a quadratic polynomial and the symbol $\sigma$ is radially symmetric. By the latter we mean that, for any orthogonal matrix $O\in \mathbb{R}^{2\times 2}$ and for all $\xi\in\mathbb{R}^2$,
\begin{align*}
\sigma(\xi)=\sigma(O\xi).
\end{align*}
Define
\begin{align}\label{eq:defcheckf}
\check{f}(x):=(2\pi)^{-d/2}(\mathcal{F}^\ast f)(x)=(2\pi)^{-d}\int\limits_{\mathbb{R}^d}d\xi\, e^{i\xi\cdot x}f(\xi),
\end{align}
for functions $f\in\mathsf{L}^1(\mathbb{R}^d)$, so that the operator $A$ has the difference kernel
\begin{align*}
A(x,y)=\check{\sigma}(x-y), \ x,y\in\mathbb{R}^d.
\end{align*}
If $\sigma$ is radially symmetric, so is $\check{\sigma}$ and we shall write, slightly abusing notation,
\begin{align*}
\sigma(|\xi|)=\sigma(\xi), \ \check{\sigma}(|x|)=\check{\sigma}(x),
\end{align*}
for all $x,\xi\in\mathbb{R}^2$. In the following theorem all coefficients $c_j$ in the asymptotics \eqref{eq:asympttrpolygonintro} are computed explicitly for such symbols $\sigma$ and quadratic test functions $h$. Again, our focus lies on the coefficient $c_0$ since the formulas for $c_2$ and $c_1$ are known to be the same as in the smooth boundary case.
\begin{theorem}\label{thm:rotsymmasymptsquare}
Suppose that $\sigma\in\mathsf{W}^{\infty,1}(\mathbb{R}^2)$ is radially symmetric and let $h(z)=z^2+bz$ for some $b\in\mathbb{C}$. Then we have that
\begin{align*}
\tr h(A_{P_L})=L^2c_2+Lc_1+c_0+\mathcal{O}(L^{-\infty}), 
\end{align*} 
as $L\to\infty$, with
\begin{align*}
c_2&=\frac{|P|}{2\pi}\int\limits_0^\infty dR\, R\,(h\circ \sigma)(R),\\
c_1&=-2\left\vert{\partial P}\right\vert\int\limits_0^\infty dr\,r^2\check{\sigma}(r)^2,\\
c_0&= \sum\limits_{X\in \Xi(P)} \tfrac{1}{2}\big[1+(\pi-\gamma_X)\cot\gamma_X\big]\int\limits_0^\infty dr\, r^3 \check{\sigma}(r)^2. 
\end{align*}
\end{theorem}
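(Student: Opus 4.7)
The plan is to apply Theorem~\ref{thm:abstractasympt} and Theorem~\ref{thm:coefficients} and to evaluate each of the three coefficients explicitly, exploiting the radial symmetry of $\sigma$. The key simplification is that $h(z)=z^2+bz$ gives $h_1(z)=h(z)-zh'(0)=z^2$, so every trace appearing in the formulas for $a_1(\nu_E)$, $a_0(\nu_E)$, and $b_0(X)$ reduces, via the kernel identity $(A_\Omega^2)(x,y)=\int_\Omega \check\sigma(x-z)\check\sigma(z-y)\,dz$, to a double integral of the radial function $\check\sigma(x-z)^2$ over a pair of geometric regions.

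The formula for $c_2$ is immediate by passing to polar coordinates. For the edge coefficients I use the one-dimensional reductions \eqref{eq:a1nuEasWienerHopf} and \eqref{eq:a0nuEasWienerHopf}. The Wiener--Hopf kernel identity $(W(\sigma_{E,t}^2)-W(\sigma_{E,t})^2)(x,y)=\int_0^\infty \check\sigma_{E,t}(x+s)\check\sigma_{E,t}(y+s)\,ds$ yields $\tr[W(\sigma_{E,t})^2-W(\sigma_{E,t}^2)]=-\int_0^\infty u\,\check\sigma_{E,t}(u)^2\,du$ and, with the additional factor $M(x)$, the analogous expression $-\tfrac12\int_0^\infty u^2\check\sigma_{E,t}(u)^2\,du$. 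The outer $t$-integration is performed through the projection--slice relation $\check\sigma_{E,t}(u)=\int_\mathbb{R}\check\sigma(\sqrt{x^2+u^2})\,e^{-itx}\,dx$: Plancherel in $t$ converts the $t$-integral into $2\pi\int_\mathbb{R}\check\sigma(\sqrt{x^2+u^2})^2\,dx$, after which polar coordinates in $(x,u)$ reduce everything to a 1D radial integral. This gives $a_1(\nu_E)=-2\int_0^\infty r^2\check\sigma(r)^2\,dr$ and $a_0(\nu_E)=-\tfrac{\pi}{4}I$, with $I:=\int_0^\infty r^3\check\sigma(r)^2\,dr$, both independent of $E$. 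Since every vertex $X\in\Xi(P)$ appears in the cotangent sum of exactly two incident edges, $\sum_E F(E)=-2\sum_X\cot\gamma_X$, so summing over edges produces the stated $c_1$ and the edge contribution $\sum_E F(E)a_0(\nu_E)=\tfrac{\pi I}{2}\sum_X\cot\gamma_X$ to $c_0$.

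The main work is the evaluation of $b_0(X)$. For a convex corner the inclusion--exclusion identity $\chi_{C(X)}-\chi_{H^{(1)}(X)}-\chi_{H^{(2)}(X)}+1=\chi_{(H^{(1)}(X))^c\cap (H^{(2)}(X))^c}$ identifies the opposite region as $-C(X)$ (placing the vertex $X$ at the origin), reducing \eqref{eq:b0Xconvex} to $b_0(X)=\int_{C(X)}\int_{-C(X)}\check\sigma(x-z)^2\,dz\,dx=\int_{C(X)}\int_{C(X)}\check\sigma(x+u)^2\,du\,dx$ after the substitution $u=-z$. Introducing $w=x+u$ and keeping $x$, the inner integration over $x\in C(X)$ subject to $w-x\in C(X)$ is just the area of the parallelogram $C(X)\cap(w-C(X))$. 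Writing $w=\alpha e^{(1)}+\beta e^{(2)}$ in the basis of the two edge unit vectors at $X$, this area equals $\alpha\beta\sin\gamma_X$, which in polar coordinates $w=r(\cos\theta,\sin\theta)$ with $\theta\in[0,\gamma_X]$ becomes $r^2\sin\theta\sin(\gamma_X-\theta)/\sin\gamma_X$. Combining with the radial weight $r\,\check\sigma(r)^2$ and using the elementary identity $\int_0^\gamma\sin\theta\sin(\gamma-\theta)\,d\theta=\tfrac12(\sin\gamma-\gamma\cos\gamma)$ produces $b_0(X)=\tfrac{I}{2}(1-\gamma_X\cot\gamma_X)$.

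For a concave corner each of the three summands of \eqref{eq:b0Xconcave} likewise reduces to a double integral of $\check\sigma(x-z)^2$ over a pair of antipodal convex sectors: $H^{(1)}(X)\cap H^{(2)}(X)$ and $(H^{(1)}(X))^c\cap(H^{(2)}(X))^c$ are antipodal sectors of opening $2\pi-\gamma_X$, while $C(X)\setminus H^{(1)}(X)$ and $C(X)\setminus H^{(2)}(X)$ are antipodal sectors of opening $\gamma_X-\pi$. Applying the convex-case formula to each (with the substitutions $\gamma\mapsto 2\pi-\gamma_X$ and $\gamma\mapsto \gamma_X-\pi$) and using $\cot(2\pi-\gamma)=-\cot\gamma$, $\cot(\gamma-\pi)=\cot\gamma$, the three contributions combine to $b_0(X)=\tfrac{I}{2}(1-\gamma_X\cot\gamma_X)$, exactly the convex-case expression. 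Adding this to the edge contribution gives $c_0=\tfrac{I}{2}\sum_X[1+(\pi-\gamma_X)\cot\gamma_X]$, as claimed. I expect the concave case to be the main technical obstacle: the non-convexity of $C(X)$ forces the trace to be decomposed into three pieces via \eqref{eq:b0Xconcave}, and careful geometric bookkeeping is needed to verify that they conspire into the same clean closed form as the convex case.
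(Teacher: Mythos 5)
Your proposal follows essentially the same route as the paper's: invoke Theorem~\ref{thm:abstractasympt}, use Theorem~\ref{thm:coefficients} to reduce $a_1(\nu_E)$ and $a_0(\nu_E)$ to one-dimensional Wiener--Hopf kernel computations (which you then evaluate via the projection--slice relation and Plancherel in $t$, where the paper uses Parseval in $t$ directly -- the same calculation in mildly different clothing), and compute $b_0(X)$ for convex corners from the geometric identity $b_0(X)=\int_{C}\!dy\,\check\sigma(y)^2\,|(y-C)\cap C|$ followed by polar coordinates. Your parametrization of the parallelogram area in the oblique basis $(e^{(1)},e^{(2)})$ versus the paper's Cartesian coordinates is immaterial; the resulting trigonometric integral is the same. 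The bookkeeping $\sum_E F(E)=-2\sum_X\cot\gamma_X$ matches the paper's Lemma~\ref{lem:halfspacesquared}.

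The one place your sketch as written would lead a careful reader astray is the concave case. You state that each of the three summands in \eqref{eq:b0Xconcave} ``likewise reduces to a double integral of $\check\sigma(x-z)^2$ over a pair of antipodal convex sectors,'' suggesting all three enter with a $+$ sign. In fact the first summand $\tr\big(\chi_{H^{(1)}\cap H^{(2)}}[A_{C}^2-A^2]\big)$ equals $\tr\big(\chi_{H^{(1)}\cap H^{(2)}}A[\chi_{C}-1]A\big)=-\tr\big(\chi_{H^{(1)}\cap H^{(2)}}A\,\chi_{-(H^{(1)}\cap H^{(2)})}A\big)$, so it contributes with a \emph{minus} sign, while the other two summands, $\tr\big(\chi_{C\setminus H^{(j)}}A[\chi_{C}-\chi_{H^{(k)}}]A\big)=+\tr\big(\chi_{C\setminus H^{(j)}}A\,\chi_{-(C\setminus H^{(j)})}A\big)$, carry a plus. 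If one uses all positive signs and plugs into the convex formula, the cotangent terms do not collapse to $\tfrac12(1-\gamma_X\cot\gamma_X)$. With the correct minus sign on the first summand and $\cot(2\pi-\gamma)=-\cot\gamma$, $\cot(\gamma-\pi)=\cot\gamma$, the identity
\begin{align*}
-\tfrac12\big[1-(2\pi-\gamma)\cot(2\pi-\gamma)\big]+2\cdot\tfrac12\big[1-(\gamma-\pi)\cot(\gamma-\pi)\big]
=\tfrac12\big(1-\gamma\cot\gamma\big)
\end{align*}
does give the stated result. You flagged the concave case as the ``main technical obstacle'' needing ``careful geometric bookkeeping,'' and the required care is precisely this sign. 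Once that is made explicit, your argument coincides with the paper's Lemma~\ref{lem:cornersquare}.
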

\begin{remark}
\begin{enumerate}
\item As in Theorem \ref{thm:abstractasympt}, the coefficients $c_1$ and $c_0$ only depend on the test function $h$ via the function $h_1(z)=z^2$.
\item Notice that, due to the radial symmetry of $\sigma$, the dependence of the coefficients $c_j$, $j=0,1,2$, on the geometry of $P$ separates from their dependence on the symbol $\sigma$.
\item Interestingly, the contribution of convex corners and concave corners to $c_0$ are obtained via the same formula, in contrast to the two distinct formulas \eqref{eq:b0Xconvex}, \eqref{eq:b0Xconcave}. 
\end{enumerate}
\end{remark}

The explicit formula for the coefficient $c_0$ given in Theorem \ref{thm:rotsymmasymptsquare} allows us to compare it with the corresponding coefficient $\mathcal{B}_0$ from the smooth boundary case, see \eqref{eq:trasymptsmoothboundary}. As in the theorem let $h$ be a quadratic test function and assume that $\sigma\in\mathsf{W}^{\infty,1}(\mathbb{R}^2)$ is radially symmetric.  Applying \cite[Thm. 1.1]{Roccaforte1984}, one gets that, for any bounded $\Omega\subset\mathbb{R}^2$ with smooth boundary, 
\begin{align}\label{eq:B0vanishes}
\mathcal{B}_0=\mathcal{B}_0(\Omega,h,\sigma)=0.
\end{align}
To our knowledge, this surprising fact has not been noted explicitly before and it even holds without the radial symmetry of $\sigma$. For the reader's convenience we provide a proof of \eqref{eq:B0vanishes} in an appendix to this paper, see Lemma \ref{lem:appendix}. In contrast to the above, the function
\begin{align}\label{eq:defanglefunction}
f(\gamma):=1+(\pi-\gamma)\cot(\gamma)
\end{align}
is positive on $(0,\pi)\cup(\pi,2\pi)$. This yields the following corollary.  
\begin{corollary} \label{corollary} Let $h(z)=z^2+bz$ and suppose that the symbol $0\neq \sigma\in\mathsf{W}^{\infty,1}(\mathbb{R}^2)$ is real-valued and radially symmetric. Moreover, assume that $P$ is a polygon and $\Omega\subset\mathbb{R}^2$ is a bounded set with smooth boundary. Then one has that
\begin{align*}
c_0(P,h,\sigma)>0,
\end{align*}
while
\begin{align*}
\mathcal{B}_0(\Omega,h,\sigma)=0,
\end{align*}
where $c_0$ and $\mathcal{B}_0$ are the constant order coefficients from \eqref{eq:asympttrpolygonintro} and \eqref{eq:trasymptsmoothboundary}.
\end{corollary}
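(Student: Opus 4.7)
The plan is to treat the two assertions separately. The vanishing $\mathcal{B}_0(\Omega,h,\sigma)=0$ for smooth $\partial\Omega$ is exactly the content of the appendix Lemma \ref{lem:appendix}, which evaluates the relevant Roccaforte boundary integrals for a quadratic test function and finds that they cancel; so for this half of the corollary I would simply invoke that lemma and move on.

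The substantive claim is the strict positivity of $c_0(P,h,\sigma)$. Here I would start from the explicit formula supplied by Theorem \ref{thm:rotsymmasymptsquare},
\begin{align*}
c_0(P,h,\sigma)=\Bigg[\sum_{X\in\Xi(P)}\tfrac{1}{2}f(\gamma_X)\Bigg]\int_0^\infty dr\, r^3\check{\sigma}(r)^2,
\end{align*}
with $f$ as in \eqref{eq:defanglefunction}, and argue that both factors are strictly positive. For the integral: since $\sigma$ is real-valued and radially symmetric (in particular even), a short computation gives that $\check{\sigma}$ is real-valued, so $r^3\check{\sigma}(r)^2\ge 0$ pointwise; injectivity of $\mathcal{F}$ together with $\sigma\not\equiv 0$ rules out $\check{\sigma}\equiv 0$, while $\sigma\in\mathsf{W}^{\infty,1}(\mathbb{R}^2)$ places $\check{\sigma}$ in the Schwartz class and guarantees convergence of the integral. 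Hence the $r$-integral is strictly positive.

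The only step with real content is showing $f(\gamma)=1+(\pi-\gamma)\cot(\gamma)>0$ on $(0,\pi)\cup(\pi,2\pi)$. My approach is to clear the cotangent by studying the auxiliary function $g(\gamma):=f(\gamma)\sin(\gamma)=\sin(\gamma)+(\pi-\gamma)\cos(\gamma)$, whose derivative is the particularly clean expression $g'(\gamma)=-(\pi-\gamma)\sin(\gamma)$. On $(0,\pi)$ this is negative, and the boundary values $g(0)=\pi$, $g(\pi)=0$ imply $g>0$ on the open interval; on $(\pi,2\pi)$ both $(\pi-\gamma)$ and $\sin(\gamma)$ are negative, so again $g'<0$, and the values $g(\pi)=0$, $g(2\pi)=-\pi$ give $g<0$. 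Since $\sin(\gamma)$ shares these sign patterns on the two intervals, $f=g/\sin$ is strictly positive on both, and summing over the vertices of $P$ concludes the proof. I do not foresee a real obstacle; the monotonicity argument for $g$ is the one step that needs more than bookkeeping, and it is immediate once the right auxiliary function is introduced.
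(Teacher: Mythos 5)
Your proof is correct and follows the same structure the paper sketches immediately before the corollary: $\mathcal{B}_0=0$ via the appendix Lemma \ref{lem:appendix}, and $c_0>0$ via the explicit formula in Theorem \ref{thm:rotsymmasymptsquare} combined with positivity of $f(\gamma)=1+(\pi-\gamma)\cot\gamma$. The paper simply asserts that $f>0$ on $(0,\pi)\cup(\pi,2\pi)$ without proof, whereas you supply a clean argument via the auxiliary function $g(\gamma)=\sin\gamma+(\pi-\gamma)\cos\gamma$, $g'(\gamma)=-(\pi-\gamma)\sin\gamma$, and a sign comparison with $\sin\gamma$ on each interval; that calculation checks out and is a useful addition.

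One small inaccuracy that does not affect the conclusion: you claim that $\sigma\in\mathsf{W}^{\infty,1}(\mathbb{R}^2)$ places $\check\sigma$ in the Schwartz class. It does not. The Sobolev condition controls derivatives of $\sigma$, hence forces super-polynomial decay of $\check\sigma$ (this is exactly Lemma \ref{lem:sigma}(i)), but it does not control decay of $\sigma$ itself, so smoothness of $\check\sigma$ is not guaranteed. What you actually need for $\int_0^\infty r^3\check\sigma(r)^2\,dr<\infty$ is only that $\check\sigma$ be bounded and decay faster than, say, $r^{-3}$, and this already follows from Lemma \ref{lem:sigma}(i). Similarly, strict positivity of the integral uses continuity of $\check\sigma$ (which follows from $\sigma\in\mathsf{L}^1$) together with $\check\sigma\not\equiv 0$, not the full Schwartz property.
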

\begin{remark}
The corollary implies the following: consider a bounded set $\Lambda\subset\mathbb{R}^2$ with either smooth or piecewise linear boundary. Then the type of the boundary can be determined from the spectral asymptotics of $A_{\Lambda_L}$, as $L\to\infty$.
\end{remark}
As a consequence of Corollary \ref{corollary} the constant order coefficient in the trace asymptotics exhibits an anomaly, similarly to the heat trace asymptotics for the Dirichlet Laplacian on two-dimensional domains with corners, see e.g. \cite{MazzeoRowlett2015, LuRowlett2016}. Any approximation of a polygon $P$ by a sequence of smooth domains $\lbrace \Omega_n \rbrace$ can not recover the coefficient $c_0$: for functions $h$ and $\sigma$ as in the corollary, one gets that
\begin{align*}
\mathcal{B}_0(\Omega_n,h,\sigma)=0\nrightarrow c_0(P,h,\sigma),
\end{align*}
as $n\to\infty$. On the other hand, the approximation of domains with smooth boundary by polygons works fine. As a simple but representative example consider a disc $\Omega$ and let $\lbrace P_n \rbrace$ be a sequence of inscribed regular $n$-gons, approximating $\Omega$. As the function $f$, see \eqref{eq:defanglefunction}, vanishes to second order at $\gamma=\pi$, one easily checks that
\begin{align*}
c_0(P_n,h,\sigma)\to 0=\mathcal{B}_0(\Omega,h,\sigma),
\end{align*}
as $n\to\infty$.

We also point out that one may apply Theorem~\ref{thm:rotsymmasymptsquare} to compute the particle number fluctuation (PNF) of a free Fermi gas at positive temperature with respect to the spatial bipartition ${\mathbb{R}^2=P_L\,\dot{\cup}\, \mathbb{R}^2\setminus P_L}$. Namely, the PNF is given by
\begin{align*}
\tr h(A_{P_L}),
\end{align*}
with 
\begin{align*}
h(x)=x(1-x), \ \sigma(\xi)=\big[1+\exp\big(\frac{\xi^2-\mu}{T}\big)\big]^{-1},
\end{align*}
see \cite{Klich2006}. Here, $\mu\in\mathbb{R}$ is the chemical potential and $T>0$ denotes temperature. Corollary \ref{corollary} allows us to compare the PNF for a scaled polygon $P_L$ with the PNF for a scaled set $\Omega_L$ with smooth boundary: if $P$ and $\Omega$ have the same area and perimeter, then the PNF for the polygon $P_L$ is strictly larger than the PNF for $\Omega_L$, as $L\to\infty$. 

\subsection{Strategy of the proofs}\label{subsec:strategyproofs}
Let us comment on the basic ideas for the proofs of Theorems~\ref{thm:abstractasympt}, \ref{thm:coefficients}, and \ref{thm:rotsymmasymptsquare}. 

The strategy of the proof of Theorem \ref{thm:abstractasympt} is as follows. The leading order term in the asymptotics originates from approximating the operator $h(A_{P_L})$ by its bulk approximation $\chi_{P_L} h(A)\chi_{P_L}$, which is a very familiar idea. Indeed, one easily computes that
\begin{align}\label{eq:bulkapproxtrace}
\tr\big(\chi_{P_L} h(A)\chi_{P_L}\big)=\int\limits_{P_L}dx\, h(A)(x,x)=|P_L|(h\circ \sigma\widecheck{)\,}\!(0) = L^2c_2.
\end{align}
Subtracting the latter from $\tr h(A_{P_L})$ leaves a remainder that is independent of the linear part of $h$, hence we may replace $h$ by the function $h_1(z)=h(z)-zh'(0)$:
\begin{align}\label{eq:diffleadingsubleading}
\tr\big(\chi_{P_L}\big[h(A_{P_L})-h(A)\big]\chi_{P_L}\big)=\tr\big(\chi_{P_L}\big[h_1(A_{P_L})-h_1(A)\big]\chi_{P_L}\big).
\end{align}
For the following steps we mainly rely on the locality of the operator $A$: due to the assumptions on the symbol $\sigma$, the kernel $A(x,y)=\check{\sigma}(x-y)$ decays super-polynomially away from the diagonal, see Lemma \ref{lem:sigma}. As a first consequence, we can prove that the operator $\chi_{P_L}\big[h_1(A_{P_L})-h_1(A)\big]\chi_{P_L}$ is concentrated on the boundary $\partial P_L$. More precisely, defining for small but fixed $\epsilon>0$ the (unscaled) one-sided $\epsilon$-neighbourhood of $\partial P$,
\begin{align}\label{eq:defV}
\mathcal{V}:=\mathcal{V}^{(\epsilon)}:=\lbrace y\in P\cup\partial P: \dist(y,\partial P)\leq \epsilon\rbrace,
\end{align}
we show that
\begin{align}
\tr\big(\chi_{P_L}\big[h_1(A_{P_L})-h_1(A)\big]\big)=\tr\big(\chi_{\mathcal{V}_L}\big[h_1(A_{P_L})-h_1(A)\big]\big)+\mathcal{O}(L^{-\infty}),
\end{align}
as $L\to\infty$. It is convenient to partition $\mathcal{V}$ into corner neighbourhoods $\mathcal{N}(X)$, $X\in\Xi(P)$, that extend along half of the edges $E^{(1)}(X)$ and $E^{(2)}(X)$, see Figure~\ref{fig:1} below. 
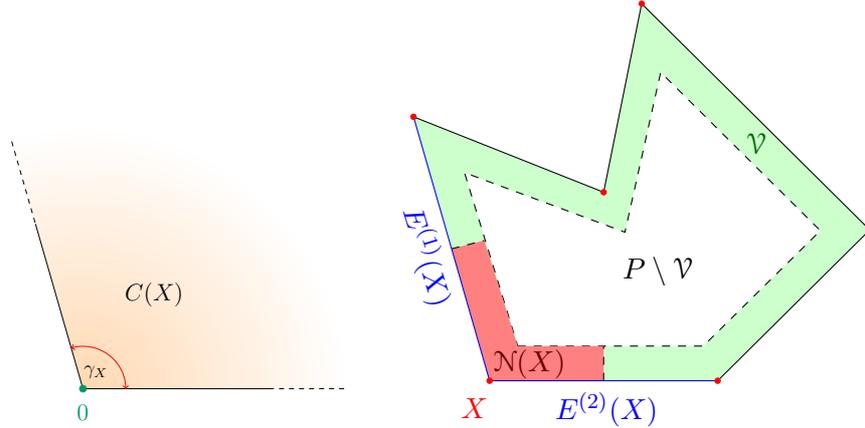
\begin{figure}[!t]
\caption{The sector $C(X)$, the one-sided boundary neighbourhood $\mathcal{V}$, and the corner neighbourhood $\mathcal{N}(X)$}\label{fig:1}
\resizebox{5cm}{!}{
\begin{tikzpicture}
\clip (-1.75,-1) rectangle (6,6);
      \shade[shading=radial, inner color=orange!30!white, outer color= white] (0,0) circle (5.5cm);
      \fill[color=white] (0,0) -- (5.5cm,0cm) arc (0:-254:5.5cm);
    \draw
		(-1,3.5) coordinate (a) -- (0,0)
		coordinate (b) -- 
		(4,0) coordinate (c)
		pic["\large $\gamma_X$", draw=red, <->, angle 
		eccentricity=0.5, angle radius=0.9cm]
		{angle=c--b--a};
	\draw[dashed] 
		(-1.5,5.25) -- (-1,3.5); 
	\draw[dashed]		
		(5.5,0) -- (4,0);
	\filldraw[green!60!blue] (0,0) circle(2pt);
	\draw (0,-0.2) node[anchor=north, text=green!60!blue]{\Large $0$};
	\node at (1.5,2){\Large $C(X)$};
\end{tikzpicture}
}
\begin{tikzpicture}
	\fill[green!20!white]
		(0,0) -- (3,0) -- 
		(5,2) -- (2,5) -- (1.5,2.5) --
		(-1,3.5) -- cycle;
	\fill[white]
		(0.37,0.46) -- (2.81,0.46) -- (4.35,1.99) -- 
		(2.25,4.09) -- (1.78,1.97) -- (-0.33,2.76) -- 
		cycle;	
	\draw 
		(3,0) -- 
		(5,2) -- (2,5) -- (1.5,2.5) --
		(-1,3.5) ;
	\draw[dashed]
		(0.37,0.46) -- (2.81,0.46) -- (4.35,1.99) -- 
		(2.25,4.09) -- (1.78,1.97) -- (-0.33,2.76) -- 
		cycle;
	\node at (2.21,1.45) {$P\setminus\mathcal{V}$};
	\fill[color=red!50!white] (0,0) -- (1.5,0) -- (1.5,0.46) -- (0.37,0.46) -- cycle;
	\fill[color=red!50!white] (0,0) -- (-0.5,1.75) -- (-0.07,1.86) -- (0.37,0.46) -- cycle;
	\draw[dashed] (1.5,0) -- (1.5,0.46) (-0.5,1.75) -- (-0.07,1.86);
	\draw[blue] (-1,3.5)--(0,0) -- (3,0);
\foreach \position in {(0,0),(3,0),(5,2),(2,5),(1.5,2.5),(-1,3.5)} 
	\filldraw[red] \position circle (1pt);
	\node[color=green!40!black,very thick] at (3.51,3.15) { $\mathbf{\mathcal{V}}$};
	\node[color=red!40!black,very thick] at (0.53,0.22) {$\mathcal{N}(X)$};
	\draw[red] (-0.2,-0.1) node[anchor=north]{$X$};
\node[color=blue] at (1.54,-0.38) {$E^{(2)}(X)$};
\node[color=blue, rotate=-73] at (-0.82,1.64) {$E^{(1)}(X)$};
\end{tikzpicture}
\end{figure}
This reduces the problem to computing the asymptotics of 
\begin{align}\label{eq:cornertrace}
\tr\big(\chi_{\mathcal{N}_L(X)}\big[h_1(A_{P_L})-h_1(A)\big]\big),
\end{align}
for a fixed vertex $X\in\Xi(P)$. In view of the translation-invariance of $A$ we may assume that $X=0$, hence the sector $C(X)$ models the corner at $X\in\Xi(P)$, see \eqref{eq:defC(X)} and Figure \ref{fig:1}. Again the locality of the operator $A$ implies that one can replace the operator $h_1(A_{P_L})$ in \eqref{eq:cornertrace} by the $L$-independent sector operator $h_1(A_{C(X)})$:
\begin{align}\label{eq:replacebysector}
\tr\big(\chi_{\mathcal{N}_L(X)}\big[h_1(A_{P_L})-h_1(A)\big]\big)=\tr\big(\chi_{\mathcal{N}_L(X)}\big[h_1(A_{C(X)})-h_1(A)\big]\big) +\mathcal{O}(L^{-\infty}).
\end{align}
Thus, we have completely localised the problem to the corner at $X\in\Xi(P)$. It remains to prove that the right-hand side of \eqref{eq:replacebysector} exhibits a two-term asymptotic expansion with super-polynomial error: the leading order term, linear in $L$, results from the parts of $\mathcal{N}(X)$ near an edge $E^{(1)}(X)$ or $E^{(2)}(X)$, whereas its constant order correction is solely produced by the fraction of $\mathcal{N}(X)$ close to the vertex $X$. In order to extract these two terms we provide a trace-class regularisation of the operator $h_1(A_{C(X)})$, see Proposition \ref{prop:regularisationcornerop}. This part of the proof shows some commonalities with the analysis in \cite{Dietlein2018} for the case of cubes. Summing up the contributions from all $X\in\Xi(P)$ finishes the proof of Theorem~\ref{thm:abstractasympt}.

Theorem~\ref{thm:coefficients} is deduced from Theorem \ref{thm:abstractasympt}. Here, the key observation is that, for a fixed edge $E\in\mathcal{E}(P)$, the operator $h(A_{H_E})-h(A)$ is invariant with respect to translations along $E$. As a consequence, it is unitarily equivalent to a direct integral over one-dimensional fibre operators that are parametrised by the tangential coordinate. Not surprisingly, these fibre operators can be rewritten in terms of one-dimensional Wiener-Hopf operators, which results in the formulas \eqref{eq:a1nuEasWienerHopf} and \eqref{eq:a0nuEasWienerHopf} for the coefficients $a_1(\nu_E)$ and $a_0(\nu_E)$.

The proof of Theorem~\ref{thm:rotsymmasymptsquare} requires the evaluation of all the coefficients $c_j$, $j=0,1,2$, from Theorem~\ref{thm:abstractasympt}. To compute $a_1(\nu_E)$ and $a_0(\nu_E)$ for all $E\in\mathcal{E}(P)$ we apply Theorem~\ref{thm:coefficients}. Moreover, the specific choice of the function $h$ allows us to evaluate $b_0(X)$ for each $X\in\Xi(P)$ via a straightforward calculation. Here, the radial symmetry of the symbol $\sigma$ is essential to extract the dependence of $b_0(X)$ on the interior angle $\gamma_X$.

\section{Trace norm estimates}\label{sec:trnormestimates}

In this section we collect the trace norm estimates that will be sufficient to prove Theorems \ref{thm:abstractasympt} and \ref{thm:coefficients}. 

\subsection{Schatten-von Neumann classes} We introduce the standard notation for Schatten-von Neumann classes $\mathfrak{S}_p$ for $p>0$, see e.g. \cite{BS}, \cite{Simon}. A compact operator $T$ is an element of $\mathfrak{S}_p$ iff its singular values $\lbrace s_k(T)\rbrace_{k=1}^\infty$ are $p$-summable, i.e.
\begin{align*}
\|T\|_p^p:=\sum\limits_{k=1}^\infty s_k(T)^p <\infty.
\end{align*}
We shall often make use of Hölder's inequality 
\begin{align}\label{eq:CSHilbertSchmidt}
\|T_1T_2\|_1\leq \|T_1\|_p\|T_2\|_q,
\end{align} 
for $T_1\in\mathfrak{S}_p,T_2\in \mathfrak{S}_q$, and $p,q>0$ such that $\tfrac{1}{p}+\tfrac{1}{q}=1$. Notice also the interpolation inequality 
\begin{align}\label{eq:interpolSpoperatornorm}
\|T\|_p^p\leq \|T\|^{p-q}\|T\|_q^q,
\end{align}
which holds if $T\in\mathfrak{S}_q$, $0<q<p$.

\subsection{Finite volume truncations of the operator $\boldsymbol{A}$}

We recall the notation
\begin{align*}
A=A(\sigma)=\mathcal{F}^\ast\sigma\mathcal{F},
\end{align*}
and 
\begin{align*}
A_\Omega=A_\Omega(\sigma)=\chi_\Omega\mathcal{F}^\ast\sigma\mathcal{F}\chi_\Omega, 
\end{align*}
where $\Omega\subseteq\mathbb{R}^d$ is a measurable subset and $\sigma:\mathbb{R}^d\to \mathbb{C}$ is the symbol of the operator $A$, acting on $L^2(\mathbb{R}^d)$. The dependence of $A$ on $\sigma$ will be mostly suppressed, unless we consider the dimension-reduced symbol as in Section \ref{sec:evalcoeff}. Let us also remind the reader of the following general notation, which was introduced in the introduction: If $f,g$ are non-negative functions, we write $f\lesssim g$ or $g\gtrsim f$ if $f\leq Cg$ for some constant $C>0$. This constant will always be independent of the scaling parameter $L$, but it might depend on the test function $h$, the symbol $\sigma$, and the geometry of the polygon $P$.

The next lemma shows that, under mild assumptions on the symbol $\sigma$, the operator $A_\Omega$ is trace class if $\Omega\subset\mathbb{R}^d$ is bounded. Even though this is well-known we provide a proof for the reader's convenience. Having the application to the polygon $P$ in mind, one deduces from \eqref{eq:hAOmegatracenorm} below that
\begin{align*}
\|h(P_L)\|_1\lesssim L^2|P|,
\end{align*}
if $\sigma\in\mathsf{L}^1(\mathbb{R}^2)\cap \mathsf{L}^\infty(\mathbb{R}^2)$, and $h:\mathbb{C}\to\mathbb{C}$ is an entire function such that $h(0)=0$. Here, the implied constant depends on $h$ and $\sigma$.
\begin{lemma}\label{lem:troponboundeddomains}
Let $\sigma\in \mathsf{L}^1(\mathbb{R}^d)$ and assume that $\Omega,\Lambda\subset \mathbb{R}^d$ are bounded sets. Then one has the bound
\begin{align}\label{eq:HSestimatefortrnormAboundeddomains}
\|\chi_\Lambda A\chi_\Omega\|_1\lesssim |\Lambda|^{1/2}|\Omega|^{1/2}\|\sigma\|_{\mathsf{L}^1(\mathbb{R}^d)},
\end{align} 
with implied constant independent of $\sigma$, $\Lambda$, and $\Omega$.
\\
If in addition $\sigma\in\mathsf{L}^{\infty}(\mathbb{R}^d)$ and $h:\mathbb{C}\to\mathbb{C}$ is an entire function with $h(0)=0$, then also the estimate
\begin{align}\label{eq:hAOmegatracenorm}
\|h(A_{\Omega})\|_1\lesssim |\Omega|
\end{align}
holds, with implied constant only depending on $h$ and $\sigma$.
\end{lemma}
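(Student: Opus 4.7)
For the first part, my plan is to split the sandwiched operator into two Hilbert--Schmidt factors and apply the H\"older inequality \eqref{eq:CSHilbertSchmidt}. Factor the symbol polarly as $\sigma = u|\sigma|$, with $u\in\mathsf{L}^\infty(\mathbb{R}^d)$ of modulus at most one (say $u:=\sigma/|\sigma|$ on $\{\sigma\neq 0\}$ and $u:=0$ otherwise), and write
\begin{equation*}
\chi_\Lambda A\chi_\Omega = T_1 T_2, \qquad T_1 := \chi_\Lambda\mathcal{F}^\ast |\sigma|^{1/2},\quad T_2 := u\,|\sigma|^{1/2}\mathcal{F}\chi_\Omega.
\end{equation*}
The operator $T_1$ is an integral operator with kernel $(2\pi)^{-d/2}\chi_\Lambda(x)\,e^{ix\cdot\xi}|\sigma(\xi)|^{1/2}$, from which one reads off
\begin{equation*}
\|T_1\|_2^2 = (2\pi)^{-d}\,|\Lambda|\,\|\sigma\|_{\mathsf{L}^1(\mathbb{R}^d)}.
\end{equation*}
An entirely analogous computation, exploiting $|u|\le 1$, yields $\|T_2\|_2^2 \le (2\pi)^{-d}|\Omega|\,\|\sigma\|_{\mathsf{L}^1(\mathbb{R}^d)}$. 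The inequality $\|T_1 T_2\|_1 \le \|T_1\|_2\|T_2\|_2$ then produces \eqref{eq:HSestimatefortrnormAboundeddomains} with explicit constant $(2\pi)^{-d}$.

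For the second part, the plan is to reduce to the first estimate via a Taylor expansion combined with a holomorphic functional calculus bound. Because $h$ is entire with $h(0)=0$, there is an entire $g$ such that $h(z)=z\,g(z)$; therefore
\begin{equation*}
h(A_\Omega) = A_\Omega\, g(A_\Omega),\qquad \|h(A_\Omega)\|_1 \le \|A_\Omega\|_1\,\|g(A_\Omega)\|.
\end{equation*}
The first factor is $\lesssim|\Omega|\,\|\sigma\|_{\mathsf{L}^1(\mathbb{R}^d)}$ by applying \eqref{eq:HSestimatefortrnormAboundeddomains} with $\Lambda=\Omega$. For the second factor, observe that $A_\Omega$ is a compression of the Fourier multiplier $A$, so $\|A_\Omega\|\le\|\sigma\|_{\mathsf{L}^\infty(\mathbb{R}^d)}$; expanding $g(z)=\sum_{n\ge 0}g_n z^n$ and summing the absolutely convergent series then bounds $\|g(A_\Omega)\|$ by a constant depending only on $h$ and $\|\sigma\|_{\mathsf{L}^\infty(\mathbb{R}^d)}$. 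This yields \eqref{eq:hAOmegatracenorm}.

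I do not expect any serious obstacle here: the only real content is the symmetric factorisation of $\chi_\Lambda A\chi_\Omega$ in part~1, after which everything is a direct kernel calculation. A minor technical point worth a brief remark is that, when $\sigma$ is assumed only to belong to $\mathsf{L}^1(\mathbb{R}^d)$ and not to $\mathsf{L}^\infty(\mathbb{R}^d)$, the operator $A$ itself need not act boundedly on $\mathsf{L}^2(\mathbb{R}^d)$; nevertheless $T_1 T_2$ is unambiguously defined as a composition of two Hilbert--Schmidt operators, coincides with $\chi_\Lambda A\chi_\Omega$ on, e.g., Schwartz functions, and is automatically trace class with the advertised bound.
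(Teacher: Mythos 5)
Your proof is correct and follows essentially the same route as the paper: factor $\chi_\Lambda A\chi_\Omega$ into two Hilbert--Schmidt operators via a square-root splitting of the symbol and apply H\"older, then bound $\|h(A_\Omega)\|_1$ via the operator-norm bound $\|A_\Omega\|\le\|\sigma\|_{\mathsf{L}^\infty}$ together with trace-norm control of $A_\Omega$. Your polar factorisation $\sigma=u|\sigma|$ is a mild refinement of the paper's decomposition of $\sigma$ into four non-negative pieces (avoiding the resulting factor of four), and your explicit $h(z)=z\,g(z)$ step just spells out what the paper compresses into "it follows that $\|h(A_\Omega)\|_1\lesssim\|A_\Omega\|_1$"; your closing remark that $T_1T_2$ is well defined even when $A$ itself is unbounded is a correct observation that the paper leaves implicit.
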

\begin{proof}
We start by proving the estimate \eqref{eq:HSestimatefortrnormAboundeddomains}. Without loss of generality, we may assume that $\sigma\geq 0$ since the symbol can be decomposed as $\sigma=\sigma_1-\sigma_2 + i(\sigma_3-\sigma_4)$ for suitable functions $\sigma_j\geq 0$. We have that
\begin{align*}
\chi_\Lambda A\chi_\Omega= B_1B_2,
\end{align*}
where $B_1$ and $B_2$ are the operators on $\mathsf{L}^2(\mathbb{R}^d)$ with kernels
\begin{align*}
B_1(x,\xi)&:=(2\pi)^{-d/2}\chi_\Lambda(x)e^{ix\cdot\xi}\sqrt{\sigma(\xi)} \nonumber\\
B_2(\xi,y)&:=(2\pi)^{-d/2}\sqrt{\sigma(\xi)}e^{-iy\cdot \xi}\chi_\Omega(y). 
\end{align*}
Hence, \eqref{eq:CSHilbertSchmidt} yields
\begin{align*}
\|\chi_\Lambda A\chi_\Omega\|_1\leq \|B_1\|_2\|B_2\|_2=(2\pi)^{-d}|\Lambda|^{1/2}|\Omega|^{1/2}\|\sigma\|_{\mathsf{L}^1(\mathbb{R}^d)},
\end{align*}
which proves \eqref{eq:HSestimatefortrnormAboundeddomains}. 

Let us now assume that $\sigma\in\mathsf{L}^1(\mathbb{R}^d)\cap\mathsf{L^\infty}(\mathbb{R}^d)$ and that $h$ is as in the formulation of the lemma. The boundedness of $\sigma$ implies the (uniform) operator norm bound
\begin{align*}
\|A_\Omega\|\leq\|A\|\leq \|\sigma\|_{\mathsf{L}^\infty(\mathbb{R}^d)}.
\end{align*} 
It follows that
\begin{align*}
\|h(A_\Omega)\|_1\lesssim \|A_\Omega\|_1,
\end{align*}
with implied constant depending only on $h$ and $\|\sigma\|_{\mathsf{L}^\infty(\mathbb{R}^d)}$. In view of \eqref{eq:HSestimatefortrnormAboundeddomains} this finishes the proof of the lemma.
\end{proof}

\subsection{Symbol estimates}

Introduce, for $N\geq 0$, the Sobolev spaces
\begin{align*}
\mathsf{W}^{N,1}(\mathbb{R}^d):=\lbrace f\in\mathsf{L}^1(\mathbb{R}^d): \ \partial^\alpha f\in\mathsf{L}^1(\mathbb{R}^d)\ \text{for all} \ \alpha\in\mathbb{N}_0^d, \ |\alpha|\leq N\rbrace,
\end{align*}
with corresponding norms
\begin{align*}
\|f\|_N:=\sum\limits_{|\alpha|\leq N} \|\partial^\alpha f\|_{\mathsf{L}^1(\mathbb{R}^d)}.
\end{align*}
Moreover, set
\begin{align*}
\mathsf{W}^{\infty,1}(\mathbb{R}^d):=\bigcap\limits_{N=0}^\infty \mathsf{W}^{N,1}(\mathbb{R}^d).
\end{align*}
In view of \cite[Thm. 2.31 (2)]{Demengel2012} we note that
\begin{align*}
\mathsf{W}^{\infty,1}(\mathbb{R}^d)\subset \mathsf{C}^\infty(\mathbb{R}^d),
\end{align*}
i.e.
\begin{align}\label{eq:defWinfty1}
\mathsf{W}^{\infty,1}(\mathbb{R}^d)&=\lbrace f\in\mathsf{C}^\infty(\mathbb{R}^d): \, \partial^\alpha f\in\mathsf{L}^1(\mathbb{R}^d) \ \text{for all} \ \alpha\in\mathbb{N}_0^d\rbrace.
\end{align}
The next lemma recalls the standard fact that, for a symbol $\sigma\in\mathsf{W}^{\infty,1}(\mathbb{R}^d)$, its (inverse) Fourier transform $\check{\sigma}$, see \eqref{eq:defcheckf}, decays super-polynomially at infinity. Moreover, it provides some information on the dimension-reduced symbol, which will be useful when proving Theorem~\ref{thm:coefficients}.  
\begin{lemma}\label{lem:sigma}
Let $\sigma\in \mathsf{W}^{\infty,1}(\mathbb{R}^d)$. Then the following statements hold true.
\begin{enumerate}[(i)]
\item For all $N\in\mathbb{N}_0$, one has the bound
\begin{align*}
|\check{\sigma}(x)|\lesssim \|\sigma\|_{N}\ang{x}^{-N},
\end{align*}
with implied constants only depending on $N$. 
\item Assume that $d\geq 2$ and define, for $t\in\mathbb{R}$, the reduced symbol 
\begin{align*}
\mathbb{R}^{d-1}\ni\xi\mapsto\sigma_t(\xi):=\sigma(t,\xi).
\end{align*}
Then we have that $\sigma_t\in \mathsf{W}^{\infty,1}(\mathbb{R}^{d-1})$, for all $t\in\mathbb{R}$. Moreover, for every $N\in\mathbb{N}_0$, it holds that $(t\mapsto \sigma_t)\in \mathsf{L}^1\big(\mathbb{R}, \mathsf{W}^{N,1}(\mathbb{R}^{d-1})\big)\cap \mathsf{C}\big(\mathbb{R}, \mathsf{W}^{N,1}(\mathbb{R}^{d-1})\big)$.
\end{enumerate}
\end{lemma}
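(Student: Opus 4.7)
\smallskip
\noindent\textbf{Proof proposal.}

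For part (i), the plan is the classical Paley--Wiener style integration by parts. For any multi-index $\alpha$ with $|\alpha|\leq N$, a standard integration by parts in the Fourier integral defining $\check\sigma$ (which is justified because $\sigma$ and all its derivatives lie in $\mathsf{L}^1(\mathbb{R}^d)$, so in particular decay in an integrated sense) yields
\begin{align*}
(ix)^\alpha\,\check\sigma(x) \;=\; (\partial^\alpha\sigma)\!\widecheck{\phantom{\sigma}}\!(x),
\end{align*}
hence $|x^\alpha\,\check\sigma(x)|\leq (2\pi)^{-d}\|\partial^\alpha\sigma\|_{\mathsf{L}^1(\mathbb{R}^d)}$ by the trivial bound for the inverse Fourier transform. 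Summing over $|\alpha|\leq N$ and using the elementary estimate $\langle x\rangle^N\lesssim 1+\sum_{|\alpha|=N}|x^\alpha|$ gives $\langle x\rangle^N |\check\sigma(x)|\lesssim \|\sigma\|_N$, which is the claim.

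For part (ii), the main point is that Fubini only yields $\sigma_t\in\mathsf{W}^{N,1}(\mathbb{R}^{d-1})$ for almost every $t$, whereas we need this for \emph{every} $t$. To upgrade from a.e.\ to pointwise, I would fix a $t_0\in\mathbb{R}$ for which the a.e.\ statement holds and, for every other $t\in\mathbb{R}$ and every multi-index $\alpha\in\mathbb{N}_0^{d-1}$ with $|\alpha|\leq N$, apply the fundamental theorem of calculus in the first variable to write
\begin{align*}
\partial_\xi^\alpha\sigma(t,\xi)\;=\;\partial_\xi^\alpha\sigma(t_0,\xi)\;+\;\int_{t_0}^{t}\partial_s\partial_\xi^\alpha\sigma(s,\xi)\,ds.
\end{align*}
Integrating this pointwise identity in $\xi$ and invoking Fubini on the right-hand side bounds $\|\partial_\xi^\alpha\sigma_t\|_{\mathsf{L}^1(\mathbb{R}^{d-1})}$ by $\|\partial_\xi^\alpha\sigma_{t_0}\|_{\mathsf{L}^1(\mathbb{R}^{d-1})}+\|\partial_s\partial_\xi^\alpha\sigma\|_{\mathsf{L}^1(\mathbb{R}^d)}$, both of which are finite since $\sigma\in\mathsf{W}^{\infty,1}(\mathbb{R}^d)$. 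Summing over $|\alpha|\leq N$ yields $\sigma_t\in\mathsf{W}^{N,1}(\mathbb{R}^{d-1})$ uniformly in $t$ (with a linear-in-$t$ loss absorbed by the $\mathsf{L}^1$-norm of $\partial_s\partial_\xi^\alpha\sigma$).

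The $\mathsf{L}^1$-in-$t$ statement is then an immediate application of Fubini, since
\begin{align*}
\int_{\mathbb{R}}\|\sigma_t\|_{\mathsf{W}^{N,1}(\mathbb{R}^{d-1})}\,dt\;=\;\sum_{|\alpha|\leq N}\|\partial^{(0,\alpha)}\sigma\|_{\mathsf{L}^1(\mathbb{R}^d)}\;\leq\;\|\sigma\|_N.
\end{align*}
For the continuity in $t$, I would reuse the fundamental theorem of calculus: for $t_n\to t$,
\begin{align*}
\|\sigma_{t_n}-\sigma_t\|_{\mathsf{W}^{N,1}(\mathbb{R}^{d-1})}\;\leq\;\sum_{|\alpha|\leq N}\int_{\min(t,t_n)}^{\max(t,t_n)}\!\!\int_{\mathbb{R}^{d-1}}|\partial_s\partial_\xi^\alpha\sigma(s,\xi)|\,d\xi\,ds,
\end{align*}
and the inner integral is an $\mathsf{L}^1(\mathbb{R}_s)$ function by Fubini, so the right-hand side tends to $0$ as $t_n\to t$ by absolute continuity of the Lebesgue integral. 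The only real subtlety in the entire lemma is the aforementioned upgrade from a.e.\ to pointwise in (ii); everything else is routine. No step should be a serious obstacle.
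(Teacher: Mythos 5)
Your proof is correct and follows essentially the same route as the paper: integration by parts for the Fourier decay in (i), and the fundamental-theorem-of-calculus upgrade from a.e.\ to every $t$, plus Fubini, for (ii); your explicit absolute-continuity argument for the continuity claim is a clean variant of the paper's more terse appeal to pointwise continuity of $\sigma$ together with the uniform bound. One small remark: the parenthetical about a \emph{linear-in-$t$ loss} is a red herring---once you dominate $\int_{t_0}^t|\partial_s\partial_\xi^\alpha\sigma(s,\xi)|\,ds$ by $\int_{\mathbb{R}}|\partial_s\partial_\xi^\alpha\sigma(s,\xi)|\,ds$, the resulting bound on $\|\sigma_t\|_{\mathsf{W}^{N,1}(\mathbb{R}^{d-1})}$ is genuinely uniform in $t$, with no growth to absorb.
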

\begin{proof}
Using the fact that $\sigma\in\mathsf{W}^{\infty,1}(\mathbb{R}^d)$ and integrating by parts we get that
\begin{align*}
|\check{\sigma}(x)|&\lesssim \big|\int d\xi\, e^{ix\cdot \xi}\sigma(\xi)\big|\nonumber\\
&=\big|\int d\xi\, \sigma(\xi)\Big[\frac{1-ix\cdot \nabla_{\xi}}{1+x^2}\Big]^N e^{ix\cdot\xi}\big| \nonumber\\
&=\big|\int d\xi\, e^{ix\cdot \xi}\Big[\frac{1+ix\cdot \nabla_{\xi}}{1+x^2}\Big]^N \sigma(\xi)\big|\nonumber\\
&\lesssim \|\sigma\|_N \ang{x}^{-N},
\end{align*}
where the implied constants only depend on $N$. For the proof of the second part of the statement notice that, since $\sigma\in \mathsf{L}^1(\mathbb{R}^d)$, there is some $t_0\in\mathbb{R}$ such that $\sigma_{t_0}\in \mathsf{L}^1(\mathbb{R}^{d-1})$. This in turn implies that
\begin{align}\label{eq:sigmatL1Linfinity}
\sup\limits_{t\in\mathbb{R}}\|\sigma_t\|_{\mathsf{L}^1(\mathbb{R}^{d-1})}&\leq \|\sup\limits_{t\in\mathbb{R}}|\sigma_t|\|_{\mathsf{L}^1(\mathbb{R}^{d-1})}=\int\limits_{\mathbb{R}^{d-1}}d\xi\, \sup\limits_{t\in\mathbb{R}}\Big|\sigma_{t_0}(\xi)+\int\limits_{t_0}^tds\, \partial_s \sigma_s(\xi)\Big|
\nonumber\\
&\leq \|\sigma_{t_0}\|_{\mathsf{L}^1(\mathbb{R}^{d-1})}+\|\partial_t\sigma\|_{\mathsf{L}^1(\mathbb{R}^{d})}<\infty,
\end{align}
i.e.~$\sigma_t\in \mathsf{L}^1(\mathbb{R}^{d-1})$ for all $t$. Moreover, the fact that $\sigma\in\mathsf{L}^1(\mathbb{R}^d)\cap \mathsf{C}(\mathbb{R}^d)$ and the uniform bound \eqref{eq:sigmatL1Linfinity} ensure that $(t\mapsto \sigma_t)\in \mathsf{L}^1\big(\mathbb{R},\mathsf{L}^1(\mathbb{R}^{d-1})\big)\cap \mathsf{C}\big(\mathbb{R},\mathsf{L}^1(\mathbb{R}^{d-1})\big)$. The analogous statements for derivatives of $\sigma_t$ follow along the same lines. This finishes the proof of the lemma.
\end{proof}
For $\sigma\in \mathsf{W}^{\infty,1}(\mathbb{R}^d)$, the off-diagonal decay of the kernel $A(x,y)=\check{\sigma}(x-y)$, see Lemma \ref{lem:sigma}, and the continuity of $\check{\sigma}$ imply the following lemma. Its rather technical proof is omitted.
\begin{lemma}\label{lem:kernelestimates}
Let $\sigma\in\mathsf{W}^{\infty,1}(\mathbb{R}^d)$ and let $h:\mathbb{C}\to\mathbb{C}$ be an entire function with $h(0)=0$. Then for any open set $G\subseteq\mathbb{R}^d$ the operator kernel 
\begin{align*}
(x,y)\mapsto h(A_G)(x,y)
\end{align*}
is a continuous function on $G\times G$.
\end{lemma}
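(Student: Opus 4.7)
The plan is to exploit the hypothesis $h(0)=0$ by pulling out two powers of the variable. Since $h$ is entire, one can factor
\[
h(z)\;=\;h'(0)\,z\;+\;z^{2}\,\tilde h(z),
\]
with $\tilde h$ again entire; substituting $z=A_G$ yields
\[
h(A_G)\;=\;h'(0)\,A_G\;+\;A_G\,\tilde h(A_G)\,A_G,
\]
in which $\tilde h(A_G)$ is a genuine bounded operator on $\mathsf{L}^2(\mathbb{R}^d)$ because $A_G$ is bounded and $\tilde h$ is entire. I would then treat the two summands separately. The linear piece is immediate: on the open set $G\times G$ its kernel coincides with $h'(0)\,\check\sigma(x-y)$, which is continuous by Lemma~\ref{lem:sigma}(i).

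The sandwiched piece is where the two flanking factors of $A_G$ really earn their keep, as they allow one to recast the kernel as an honest $\mathsf{L}^2$-pairing. My claim is that, for $(x,y)\in G\times G$,
\[
\bigl(A_G\,\tilde h(A_G)\,A_G\bigr)(x,y)\;=\;\bigl\langle \phi_x,\,\tilde h(A_G)\,\psi_y\bigr\rangle_{\mathsf{L}^2(\mathbb{R}^d)},
\]
with
\[
\phi_x(z):=\chi_G(z)\,\overline{\check\sigma(x-z)},\qquad \psi_y(z):=\chi_G(z)\,\check\sigma(z-y).
\]
The super-polynomial decay of $\check\sigma$ supplied by Lemma~\ref{lem:sigma}(i) ensures that $\phi_x,\psi_y\in\mathsf{L}^2(\mathbb{R}^d)$ for every $x,y\in G$, so the pairing is well defined. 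Granted the identity, continuity of the sandwiched kernel on $G\times G$ reduces to showing that the two maps $x\mapsto\phi_x$ and $y\mapsto\psi_y$ are continuous $G\to\mathsf{L}^2(\mathbb{R}^d)$, and then invoking joint continuity of the inner product together with boundedness of $\tilde h(A_G)$. The $\mathsf{L}^2$-continuity in $x$ follows from dominated convergence: pointwise convergence of the integrand comes from continuity of $\check\sigma$, and an $\mathsf{L}^2(dz)$-dominant of the form $C\ang{z}^{-N}$, valid uniformly for $x$ in a bounded neighbourhood of any $x_0\in G$, is furnished by Lemma~\ref{lem:sigma}(i) together with the crude comparison $\ang{x-z}\gtrsim\ang{z}$ for $|z|$ large.

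The step I expect to require the most care is justifying the pairing identity itself, since $\tilde h(A_G)$ is not a priori an integral operator in the strict sense. I would handle this by expanding $\tilde h(A_G)=\sum_{n\geq 0} b_n A_G^{\,n}$ as an operator-norm convergent power series, verifying the identity for each monomial $A_G\,A_G^{\,n}\,A_G$ by Fubini on its explicit iterated-integral kernel (here the super-polynomial decay of $\check\sigma$ trivialises integrability), and then passing to the limit, using that both sides of the identity depend norm-continuously on $\tilde h(A_G)$ at fixed $x,y\in G$. Summing with the linear piece delivers the continuity of $h(A_G)(x,y)$ on $G\times G$ asserted by the lemma.
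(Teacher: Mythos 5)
The paper explicitly omits the proof of this lemma, describing it as ``rather technical,'' so there is no in-house argument to compare against; your approach is correct and, I think, quite clean. Factoring $h(z)=h'(0)z+z^{2}\tilde h(z)$ is the right move: the linear term contributes the manifestly continuous kernel $h'(0)\check\sigma(x-y)$ on $G\times G$, and the two flanking copies of $A_G$ in the remainder convert the kernel question into $\mathsf{L}^{2}$-continuity of the vector-valued maps $x\mapsto\phi_x$ and $y\mapsto\psi_y$, which you handle correctly from the decay of $\check\sigma$ and dominated convergence.

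Two small points for polish. Continuity of $\check\sigma$ is not what Lemma~\ref{lem:sigma}(i) asserts---that item gives only decay; continuity is the Riemann--Lebesgue fact that $\check\sigma\in\mathsf{C}_0(\mathbb{R}^d)$ because $\sigma\in\mathsf{L}^1(\mathbb{R}^d)$. You do need (i) for the $\mathsf{L}^2$-dominant, so the citation is not wasted, merely slightly misplaced. More substantively, in the limiting step you cannot quite appeal to norm-continuity of ``the kernel value $(A_G\tilde h(A_G)A_G)(x,y)$'' in $\tilde h(A_G)$, since until the pairing identity is established there is no pointwise kernel to speak of on the left-hand side. The repair is routine: test against $f,g\in\mathsf{C}_c(G)$. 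The operator pairings $\langle g,\,A_G P_N(A_G)A_G\,f\rangle$ converge to $\langle g,\,A_G\tilde h(A_G)A_G\,f\rangle$ by operator-norm convergence of the partial sums, while on the kernel side $\iint\overline{g}\,K_N\,f$ converges to $\iint\overline{g}\,K\,f$ by dominated convergence, because $|K_N(x,y)|\le\|\check\sigma\|_{\mathsf{L}^2}^2\sup_N\|P_N(A_G)\|$ uniformly on $G\times G$. Alternatively, you can dispense with the power series entirely: for $f\in\mathsf{L}^1(G)\cap\mathsf{L}^2(G)$ the map $y\mapsto f(y)\psi_y$ is Bochner-integrable in $\mathsf{L}^2(\mathbb{R}^d)$, giving $A_Gf=\int_G f(y)\psi_y\,dy$, and hence $\langle\phi_x,\tilde h(A_G)A_Gf\rangle=\int_G\langle\phi_x,\tilde h(A_G)\psi_y\rangle f(y)\,dy$; this identifies $\langle\phi_x,B\psi_y\rangle$ as the kernel of $A_G B A_G$ for \emph{any} bounded $B$ in the middle, with no approximation argument needed.
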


\subsection{Localisation estimates}
Throughout this subsection, let $\sigma\in\mathsf{W}^{\infty,1}(\mathbb{R}^d)$ and let $h:\mathbb{C}\to\mathbb{C}$ be an entire function that vanishes to second order at $z=0$. One of the main tools for proving Theorem~\ref{thm:abstractasympt} is the next proposition. It is of similar spirit as \cite[Thm. 2.5]{Dietlein2018}, which was recently established in the context of ergodic Schrödinger operators.
\begin{proposition}\label{prop:trnormdecay}
Suppose that $\Lambda\subseteq \Omega\subseteq \mathbb{R}^d$ and let $a,b\in \mathbb{R}^d$. Then, for any $N\in\mathbb{N}$, there exists a constant $C_{h,\sigma,N}\geq 0$ such that 
\begin{align}\label{eq:trnormorderedsubsets}
\|\chi_{Q_a\cap \Lambda}\big[h(A_\Lambda)-h(A_\Omega)\big]\chi_{Q_b}\|_1 \leq C_{h,\sigma,N} \ang{\dist(a,\Omega\setminus \Lambda)}^{-N}\ang{\dist(b,\Omega\setminus \Lambda)}^{-N}\ang{a-b}^{-N}.
\end{align}
More precisely, if $h$ is given by the power series $h(z)=\sum\limits_{k=2}^\infty a_kz^k$, then the constant $C_{h,\sigma,N}$ may be bounded as
\begin{align}\label{eq:chsigmaNbound}
C_{h,\sigma,N}\lesssim \sum\limits_{k=2}^\infty k|a_k|\big[C_{N}\|\sigma\|_{2N+2d+2}\big]^k,
\end{align}
for some constant $C_N\geq 0$ and implied constant only depending on $N$.
\end{proposition}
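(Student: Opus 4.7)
The plan combines a power-series expansion of $h$, a standard telescoping identity that isolates the boundary part of $A_\Omega - A_\Lambda$, pointwise kernel bounds for iterated powers of $A$, and a Hilbert--Schmidt factorization around the boundary cutoff. Starting from $h(z)=\sum_{k\ge 2}a_k z^k$ and the telescoping
\[
A_\Omega^k - A_\Lambda^k = \sum_{j=0}^{k-1}A_\Omega^j(A_\Omega - A_\Lambda)A_\Lambda^{k-1-j},
\]
the algebraic identity (from $\chi_\Omega = \chi_\Lambda + \chi_{\Omega\setminus\Lambda}$)
\[
A_\Omega - A_\Lambda = \chi_\Omega A\chi_{\Omega\setminus\Lambda} + \chi_{\Omega\setminus\Lambda}A\chi_\Lambda
\]
places a cutoff $\chi_{\Omega\setminus\Lambda}$ in every summand of $h(A_\Omega) - h(A_\Lambda)$. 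This cutoff is the source of the two distance factors in the statement, while $\langle a-b\rangle^{-N}$ will emerge from iterated off-diagonal decay of $\check\sigma$.

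For the iterated kernels, one has the standard bound
\[
|A_\Omega^j(x,y)|,\ |A_\Lambda^m(x,y)| \le (C_N\|\sigma\|_N)^{\max(j,m)}\langle x-y\rangle^{-N},\qquad N > d,
\]
derived from Lemma 3.2(i) together with the iterated convolution estimate $|\check\sigma|^{*j}\lesssim \langle\cdot\rangle^{-N}$, whose constants iterate to a clean $j$-th power. Substituting into the typical summand $R_{k,j} := A_\Omega^j\chi_\Omega A\chi_{\Omega\setminus\Lambda}A_\Lambda^{k-1-j}$ and performing the two inner integrations via $\int\langle x-w\rangle^{-N}\langle w-z\rangle^{-N}dw\lesssim\langle x-z\rangle^{-N}$ produces the reduction
\[
|R_{k,j}(x,y)|\lesssim (C_N\|\sigma\|_N)^k \int_{\Omega\setminus\Lambda}\langle x-z\rangle^{-N}\langle z-y\rangle^{-N}\,dz.
\]
Taking the exponent $N$ large enough and combining (i) $|x-z|\ge\dist(x,\Omega\setminus\Lambda)$ and $|z-y|\ge\dist(y,\Omega\setminus\Lambda)$ for $z\in\Omega\setminus\Lambda$, (ii) the Peetre-type inequality $\langle x-z\rangle\langle z-y\rangle\gtrsim\langle x-y\rangle$, and (iii) a $d+1$ margin of $N$ absorbed by the $z$-integral for its convergence, one extracts the three-factor pointwise decay
\[
|R_{k,j}(x,y)|\lesssim [C\|\sigma\|_N]^k\langle x-y\rangle^{-N_0}\langle\dist(x,\Omega\setminus\Lambda)\rangle^{-N_0}\langle\dist(y,\Omega\setminus\Lambda)\rangle^{-N_0},
\]
where $N_0\sim N/3$ is the target exponent.

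To convert this pointwise bound into the desired trace-norm bound on $\chi_{Q_a\cap\Lambda}R_{k,j}\chi_{Q_b}$, split $R_{k,j} = T_L\cdot T_R$ using the Lemma 3.1 factorization $A = \mathcal{F}^*|\sigma|^{1/2}\cdot\sgn(\sigma)|\sigma|^{1/2}\mathcal{F}$ applied to the central jump factor, so that $T_L = \chi_{Q_a\cap\Lambda}A_\Omega^j\chi_\Omega\mathcal{F}^*|\sigma|^{1/2}$ and $T_R = \sgn(\sigma)|\sigma|^{1/2}\mathcal{F}\chi_{\Omega\setminus\Lambda}A_\Lambda^{k-1-j}\chi_{Q_b}$. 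Using $\|T_L T_R\|_1\le \|T_L\|_2\|T_R\|_2$ and computing each Hilbert--Schmidt norm by inserting the pointwise kernel bound, carefully distributed so that $T_L$ carries the $\chi_{Q_a\cap\Lambda}$-side decay and $T_R$ carries the $\chi_{Q_b}$-side decay (with the Peetre factor $\langle x-y\rangle^{-N}$ split symmetrically), yields the trace-norm bound with the three advertised decay factors. Summing over $j\in\{0,\ldots,k-1\}$ contributes the linear factor $k$ in \eqref{eq:chsigmaNbound}, and summing over $k\ge 2$ against $|a_k|$ converges by the entirety of $h$. The Sobolev index $M = 2N+2d+2$ in \eqref{eq:chsigmaNbound} reflects the two Lemma 3.1 type factorizations (each costing $N+d+1$ derivatives of $\sigma$) performed in the trace-norm conversion.

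The principal obstacle is precisely this last step: the three-factor pointwise kernel decay must be preserved through the Hilbert--Schmidt factorization, which requires a careful symmetric distribution of the boundary weights $\langle\dist(\cdot,\Omega\setminus\Lambda)\rangle^{-N}$ and the diagonal weight $\langle x-y\rangle^{-N}$ between $T_L$ and $T_R$ so that each HS norm inherits its full share of the relevant decay; meanwhile the $k$-dependence of the constants has to remain merely exponential of the form $[C_N\|\sigma\|_{2N+2d+2}]^k$, rather than combinatorial, for the power series of $h$ to be summable term by term.
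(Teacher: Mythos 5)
Your high-level plan---telescoping $A_\Omega^k-A_\Lambda^k$ to expose a boundary cutoff $\chi_{\Omega\setminus\Lambda}$, exploiting off-diagonal decay of $\check\sigma$, and then converting to a trace-norm bound---is the right idea, and the algebra and the iterated pointwise kernel bound $|A_\Omega^j(x,y)|\lesssim(C_N\|\sigma\|_N)^j\langle x-y\rangle^{-N}$ are sound. But the final conversion step, which you yourself flag as the principal obstacle, does not work as described. The split $T_L=\chi_{Q_a\cap\Lambda}A_\Omega^j\chi_\Omega\mathcal{F}^\ast|\sigma|^{1/2}$, $T_R=\sgn(\sigma)|\sigma|^{1/2}\mathcal{F}\chi_{\Omega\setminus\Lambda}A_\Lambda^{k-1-j}\chi_{Q_b}$ places the boundary cutoff entirely in $T_R$; the intermediate variable lives in Fourier space, where ``distance to $\Omega\setminus\Lambda$'' is meaningless, so $\|T_L\|_2$ is a fixed quantity that cannot decay in $\dist(a,\Omega\setminus\Lambda)$. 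The product bound $\|T_LT_R\|_1\le\|T_L\|_2\|T_R\|_2$ therefore produces at most one of the two boundary-distance factors. No ``careful symmetric distribution of weights'' between $T_L$ and $T_R$ can remedy this: one factor simply has no access to the set $\Omega\setminus\Lambda$. (Also, note that a rapidly decaying kernel does not by itself imply trace class, so the pointwise three-factor bound cannot be promoted to a trace-norm bound by any soft argument.)

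The paper's proof circumvents exactly this difficulty by avoiding a Fourier-space factorization and instead summing Schatten norms over unit-cube pairs in physical space: $\|\tau_{mn}\|_1\le\sum_{x\in\Lambda_+,\,y\in(\Omega\setminus\Lambda)_+}\|\chi_{Q_a}[A_\Lambda]^m\chi_{Q_x}\|_2\,\|\chi_{Q_x}A\chi_{Q_y}\|\,\|\chi_{Q_y}[A_\Omega]^n\chi_{Q_b}\|_2$, each factor controlled by Lemma~\ref{lem:trnormestoffdiag}. The constraint $y\in(\Omega\setminus\Lambda)_+$, combined with the chain decay $\langle a-x\rangle^{-M}\langle x-y\rangle^{-M}\langle y-b\rangle^{-M}$, is what delivers \emph{both} $\langle\dist(a,\Omega\setminus\Lambda)\rangle^{-N}$ and $\langle\dist(b,\Omega\setminus\Lambda)\rangle^{-N}$; the factor $\langle a-b\rangle^{-N}$ is then obtained from a separate, simpler estimate and an interpolation step. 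To repair your argument you would need to replace the single HS split at the jump by such a real-space lattice decomposition (or something serving the same purpose), so that the left factor also ``sees'' the boundary through a chained sum. Finally, a small inaccuracy: the Sobolev index $2N+2d+2$ does not reflect two Lemma~\ref{lem:troponboundeddomains}-type factorizations; it comes from requiring two extra margins of $d+1$ in the lattice-sum estimate (Peetre plus summability), followed by doubling $N$ before interpolating, as in the paper's proof.
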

\begin{remark}
\begin{enumerate}
\item Unlike in \cite[Thm. 2.5]{Dietlein2018}, we do not require convexity of the set $\Omega$.
\item In \cite{Dietlein2018} the author deduced their result with the help of an a-priori Schatten quasi-norm bound in $\mathfrak{S}_q$ for some $q<1$, see \cite[Eq. (2.3)]{Dietlein2018}. In the special case of Wiener-Hopf-operators, this a-priori bound reduces to
\begin{align}\label{eq:apriorischattenqbound}
\sup\limits_{a,b\in\mathbb{R}^d}\|\chi_{Q_a} A\chi_{Q_b}\|_q < \infty.
\end{align}
This estimate holds if, in addition to $\sigma\in\mathsf{W}^{\infty,1}(\mathbb{R}^2)$, we suppose that $\sigma\in \mathsf{L}^p(\mathbb{R}^2)$ for some $p\in (0,q)$, see \cite[Ch. 11, Thm. 13]{BS}. However, we prefer not to assume any additional decay on $\sigma$. Instead, we exploit the basic Hilbert Schmidt bound \eqref{eq:HSestonunitcubes} on unit cubes from Lemma \ref{lem:trnormestoffdiag} below.
\item  The mild decay assumptions on the symbol $\sigma$ are compensated by assuming that the test function $h$ vanishes to second order at $z=0$. This assumption is sufficient to prove Theorem \ref{thm:abstractasympt}: we will exclusively apply Proposition \ref{prop:trnormdecay} to the function $h_1$, see \eqref{eq:defh1}.
\end{enumerate}
\end{remark}
Proposition \ref{prop:trnormdecay} follows from approximation of the test function $h$ by polynomials and the next lemma. 
\begin{lemma}\label{lem:trnormestoffdiag}
Let $a,b\in\mathbb{R}^d$. Then, for all $N\in\mathbb{N}$, there exists constants $c_N\geq \tilde{c}_N\geq 0$ such that
\begin{align}\label{eq:HSestonunitcubes}
\|\chi_{Q_a} A\chi_{Q_b}\|\leq \|\chi_{Q_a} A\chi_{Q_b}\|_2\leq \tilde{c}_N\|\sigma\|_N\ang{a-b}^{-N},
\end{align}
and such that, for all $k\in\mathbb{N}\setminus \lbrace 1\rbrace$, $p\in\lbrace 1,2\rbrace$, 
\begin{align}\label{eq:polynomialHSest}
\sup\limits_{G\subseteq\mathbb{R}^d}\|\chi_{Q_a}[A_G]^k\chi_{Q_b}\|_p\leq \big[c_N\|\sigma\|_{N+d+1}\big]^k\ang{a-b}^{-N}.
\end{align}
\end{lemma}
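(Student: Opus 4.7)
The plan is to handle the two estimates in sequence, with the second reducing to the first plus a Hölder/convolution argument.

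For the Hilbert--Schmidt bound \eqref{eq:HSestonunitcubes}, I would compute directly from the kernel representation $\chi_{Q_a}(x)\check\sigma(x-y)\chi_{Q_b}(y)$. By Lemma~\ref{lem:sigma}(i), $|\check\sigma(x-y)|\lesssim \|\sigma\|_N\ang{x-y}^{-N}$. For $x\in Q_a$, $y\in Q_b$ the reverse triangle inequality gives $|x-y|\geq |a-b|-\sqrt d$, so $\ang{x-y}\gtrsim \ang{a-b}$ with a constant depending only on $d$ (and trivially so when $|a-b|\leq 2\sqrt d$). Since each cube has unit volume, integrating the squared kernel over $Q_a\times Q_b$ yields $\|\chi_{Q_a}A\chi_{Q_b}\|_2\lesssim \|\sigma\|_N\ang{a-b}^{-N}$, and the operator norm is trivially dominated by the Hilbert--Schmidt norm.

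For the estimate on powers of $A_G$, I would insert a partition of unity based on unit cubes. Decompose $\mathbb{R}^d = \bigsqcup_{n\in\mathbb{Z}^d} C_n$ into half-open unit cubes $C_n := n+[-\tfrac12,\tfrac12)^d$, so that $\chi_{C_n\cap G}\leq \chi_{Q_n}$. Writing $A_G^k = \chi_G A\chi_G A\chi_G\cdots A\chi_G$ with $k$ copies of $A$ and inserting $\sum_n \chi_{C_n}=\mathrm{id}$ between consecutive $A$'s, one expands
\begin{equation*}
\chi_{Q_a}A_G^k\chi_{Q_b} = \sum_{n_1,\ldots,n_{k-1}\in\mathbb{Z}^d} T_1 T_2\cdots T_k,
\end{equation*}
where $T_1=\chi_{Q_a\cap G}A\chi_{C_{n_1}\cap G}$, $T_j=\chi_{C_{n_{j-1}}\cap G}A\chi_{C_{n_j}\cap G}$ for $2\leq j\leq k-1$, and $T_k=\chi_{C_{n_{k-1}}\cap G}A\chi_{Q_b\cap G}$. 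For $p=1$ I apply H\"older in the form $\|T_1\cdots T_k\|_1\leq \|T_1\|_2\|T_2\|\cdots\|T_{k-1}\|\|T_k\|_2$; for $p=2$ the same bound works with the last $\|T_k\|_2$ replaced by $\|T_k\|$. Each factor is dominated by the corresponding $\|\chi_{Q_\cdot}A\chi_{Q_\cdot}\|_2$, to which the first part of the lemma applies.

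This reduces the claim to a convolution-type sum over $\mathbb{Z}^d$: after choosing $M=N+d+1$ in \eqref{eq:HSestonunitcubes} for each factor, one is left with bounding
\begin{equation*}
\sum_{n_1,\ldots,n_{k-1}\in\mathbb{Z}^d}\ang{a-n_1}^{-M}\ang{n_1-n_2}^{-M}\cdots\ang{n_{k-1}-b}^{-M}.
\end{equation*}
The standard discrete convolution estimate $\sum_{n\in\mathbb{Z}^d}\ang{x-n}^{-M}\ang{n-y}^{-M}\leq C_{M,d}\ang{x-y}^{-M}$, valid for $M>d$, applied inductively in the intermediate variables $n_1,\ldots,n_{k-1}$ collapses this to $C_{M,d}^{k-1}\ang{a-b}^{-M}\leq C_{M,d}^{k-1}\ang{a-b}^{-N}$. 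Absorbing $C_{M,d}$ into a constant $c_N$ gives the claimed bound $(c_N\|\sigma\|_{N+d+1})^k\ang{a-b}^{-N}$.

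The main potential obstacle is bookkeeping the dependence on $k$: the constant must grow at most like $c_N^k$, which I expect from the geometric manner in which the HS bound and the convolution constant enter each layer. The cubes being closed (hence overlapping) rather than a true partition costs only a dimension-dependent multiplicative constant, absorbed into $c_N$, since $\chi_{C_n}$ still form a genuine resolution of the identity.
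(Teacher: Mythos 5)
Your proposal is correct and follows essentially the same route as the paper: bound the kernel $\check\sigma(x-y)$ on unit cubes via Lemma~\ref{lem:sigma} to get \eqref{eq:HSestonunitcubes}, then expand $[A_G]^k$ through a unit-cube partition of unity, apply H\"older with one Hilbert--Schmidt factor at each end and operator norms in between, and collapse the resulting lattice sum via a discrete convolution estimate with $M=N+d+1>d$. The only differences are cosmetic bookkeeping — you use half-open cubes and the standard convolution inequality $\sum_n\ang{x-n}^{-M}\ang{n-y}^{-M}\lesssim\ang{x-y}^{-M}$, whereas the paper uses closed cubes centred on $\mathbb{Z}^d$ and splits $M$ into an $N$-part handled by Peetre's inequality and a $(d+1)$-part handled by the summable tail $c_d'$.
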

\begin{proof}
The estimate \eqref{eq:HSestonunitcubes} is a direct consequence of Lemma \ref{lem:sigma}. To prove \eqref{eq:polynomialHSest} define, for all $N\geq 1$, the constants
\begin{align*}
\tilde{c}_{N,\sigma}:=\tilde{c}_N\sum\limits_{|\alpha|\leq N}\|\partial^\alpha \sigma\|_{\mathsf{L}^1(\mathbb{R}^d)},
\end{align*}
and set
\begin{align}\label{eq:supsum}
c_d':=\sup\limits_{|x|\leq 1}\sum\limits_{y\in\mathbb{Z}^d}\ang{y+x}^{-d-1}<\infty.
\end{align}
Let $N\in\mathbb{N}$, $k\geq 2$, $G\subseteq\mathbb{R}^d$, and $M:=N+d+1$. Then \eqref{eq:HSestonunitcubes} implies that, for $p\in\lbrace 1,2\rbrace$ and for all $a,b\in\mathbb{R}^d$,
\begin{align}\label{eq:HSestrestrpowers}
\|\chi_{Q_a} [A_G]^k \chi_{Q_b}\|_p&\leq \!\!\sum\limits_{y_1,\dots,y_{k-1}\in\mathbb{Z}^d}\!\!\!\!\!\! \|\chi_{Q_a} A\chi_{Q_{y_1}}\|_2 \|\chi_{Q_{y_1}} A\chi_{Q_{y_2}}\| \cdot \dots \cdot \|\chi_{Q_{k-2}} A\chi_{Q_{y_{k-1}}}\|\|\chi_{Q_{y_{k-1}}} A\chi_{Q_{b}}\|_2\nonumber\\
&\leq \tilde{c}_{M,\sigma}^k\!\!\!\!\sum\limits_{y_1,\dots,y_{k-1}\in\mathbb{Z}^d}\!\! \ang{a-y_1}^{-M}\ang{y_1-y_2}^{-M}\cdot \dots \cdot \ang{y_{k-2}-y_{k-1}}^{-M}\ang{y_{k-1}-b}^{-M}\nonumber\\
&\leq \tilde{c}_{M,\sigma}^k \big[2^{N/2}c_d'\big]^{k-1} \ang{a-b}^{-N}.
\end{align}
Here we have used Peetre's inequality,
\begin{align*}
\ang{x-y}^N\ang{y-z}^N\geq 2^{-N/2}\ang{x-z}^N,\ \text{for}\ x,y,z\in\mathbb{R}^d,
\end{align*} 
and the definition of $c_d'$, see \eqref{eq:supsum}. Setting
\begin{align*}
c_N:=\tilde{c}_{M}2^{N/2}c_d',
\end{align*}
\eqref{eq:polynomialHSest} follows and the proof of the lemma is complete.
\end{proof}

\begin{proof}[Proof of Proposition \ref{prop:trnormdecay}]
Let $h$ be an entire function of the form $h(z)=\sum\limits_{k=2}^\infty a_kz^k$. Then Lemma~\ref{lem:trnormestoffdiag} implies that
\begin{align}\label{eq:trnormdiffoffdiagdecay}
\|\chi_{Q_a\cap \Lambda}\big[h(A_\Lambda)-h(A_\Omega)\big]\chi_{Q_b}\|_1 \leq C_{h,\sigma,N}'\ang{a-b}^{-N},
\end{align}
where 
\begin{align*}
C_{h,\sigma,N}':=2\sum\limits_{k\geq 2}|a_k|\big[c_N\|\sigma\|_{N+d+1}\big]^k,
\end{align*}
and $c_N$ is the constant in Lemma \ref{lem:trnormestoffdiag}. As we may interpolate with \eqref{eq:trnormdiffoffdiagdecay}, it suffices to show that
\begin{align}\label{eq:finalesttrnormh}
\|\chi_{Q_a\cap \Lambda}\big[h(A_\Lambda)-h(A_\Omega)\big]\chi_{Q_b}\|_1 \leq C_{h,\sigma,N}''\ang{\dist(a,\Omega\setminus\Lambda)}^{-N}\ang{\dist(b,\Omega\setminus\Lambda)}^{-N},
\end{align}
for an appropriate constant $C_{h,\sigma,N}''$. Again, we first prove \eqref{eq:finalesttrnormh} for monomials $h(z)=z^k$, $k\geq 2$. Defining for $m,n\in\mathbb{N}_0$ the operators
\begin{align*}
\tau_{mn}:=\chi_{Q_a}[A_\Lambda]^m\chi_\Lambda A\chi_{\Omega\setminus\Lambda}[A_\Omega]^n\chi_{Q_b},
\end{align*}
one gets that
\begin{align}\label{eq:polydiff}
\chi_{Q_a\cap\Lambda}\big([A_\Lambda]^k-[A_\Omega]^k\big)\chi_{Q_b}&=\sum\limits_{l=0}^{k-1}\chi_{Q_a\cap \Lambda}[A_\Lambda]^{k-l-1}(A_\Lambda-A_\Omega)[A_\Omega]^{l}\chi_{Q_b}\nonumber\\
&=-\chi_\Lambda\sum\limits_{l=0}^{k-1}\tau_{k-l-1,l}.
\end{align}
Fix the numbers $M$ and $M'$, depending on $N$ and $d$: 
\begin{align*}
M:=N+d+1,\ M':=N+2d+2=M+d+1.
\end{align*}
Moreover, define for any set $G\subseteq \mathbb{R}^d$ the corresponding lattice point neighbourhood
\begin{align}\label{eq:deflatticepointneighb}
G_+:=\lbrace y\in \mathbb{Z}^d: Q_y\cap G\neq \emptyset\rbrace.
\end{align}
We apply Lemma \ref{lem:trnormestoffdiag} and estimate as in \eqref{eq:HSestrestrpowers} to deduce that, for $m,n\in\mathbb{N}$,
\begin{align}\label{eq:taumnest}
\|\tau_{mn}\|_1&\leq \sum\limits_{\substack{x\in \Lambda_+\\y\in (\Omega\setminus\Lambda)_+}}\|\chi_{Q_a} [A_\Lambda]^m\chi_{Q_x}\|_2\|\chi_{Q_x} A\chi_{Q_y}\| \|\chi_{Q_y} [A_\Omega]^n\chi_{Q_b}\|_2\nonumber\\
&\leq \sum\limits_{\substack{x\in \Lambda_+ \\ y\in (\Omega\setminus\Lambda)_+}}\big[c_{M}\|\sigma\|_{M'}\big]^{n+m+1}\ang{a-x}^{-M}\ang{x-y}^{-M}\ang{y-b}^{-M}\nonumber\\
&\lesssim \big[c_{M}\|\sigma\|_{M'}\big]^{n+m+1}\ang{\dist(a,\Omega\setminus\Lambda)}^{-N}\ang{\dist(b,\Omega\setminus\Lambda)}^{-N}.
\end{align}
Here, the implied constants only depend on $N$.
Similarly, we estimate for $n\geq 1$,
\begin{align}\label{eq:taum0est}
\|\tau_{0n}\|_1&\leq \sum\limits_{y\in (\Omega\setminus\Lambda)_+}\|\chi_{Q_a}A\chi_{Q_y}\|_2\|\chi_{Q_y}[A_\Omega]^n\chi_{Q_b}\|_2\nonumber\\
&\leq \sum\limits_{y\in (\Omega\setminus\Lambda)_+} \big[c_{M}\|\sigma\|_{M'}\big]^{n+1}\ang{a-y}^{-M}\ang{y-b}^{-M}\nonumber\\
&\lesssim \big[c_{M}\|\sigma\|_{M'}\big]^{n+1}\ang{\dist(a,\Omega\setminus\Lambda)}^{-N}\ang{\dist(b,\Omega\setminus\Lambda)}^{-N}.
\end{align}
In case $Q_b\cap \Omega\setminus\Lambda=\emptyset$ one has that $\tau_{m0}=0$, hence combining \eqref{eq:polydiff}, \eqref{eq:taumnest}, and \eqref{eq:taum0est} gives
\begin{align}\label{eq:finalpolyest}
\|\chi_{Q_a\cap \Lambda}\big([A_\Omega]^k-[A_\Lambda]^k\big)\chi_{Q_b}\|_1\lesssim k \big[c_{M}\|\sigma\|_{M'}\big]^k \ang{\dist(a,\Omega\setminus\Lambda)}^{-N}\ang{\dist(b,\Omega\setminus\Lambda)}^{-N},
\end{align} 
with implied constants only depending on $N$. If $Q_b\cap \Omega\setminus\Lambda\neq \emptyset$, we estimate 
\begin{align*}
\|\tau_{m0}\|_1&\leq \|\chi_{Q_a}[A_\Lambda]^m\chi_\Lambda A\chi_{Q_b}\|_1\nonumber\\
&\leq \sum\limits_{x\in \Lambda_+}\|\chi_{Q_a} [A_\Lambda]^m\chi_{Q_x}\|_2\|\chi_{Q_x}A\chi_{Q_b}\|_2 \nonumber\\
&\leq \sum\limits_{x\in \Lambda_+}\big[c_{M}\|\sigma\|_{M'}\big]^{m+1}\ang{a-x}^{-M}\ang{x-b}^{-M}\nonumber\\
&\lesssim \big[c_{M}\|\sigma\|_{M'}\big]^{m+1}\ang{\dist(a,\Omega\setminus\Lambda)}^{-N}, 
\end{align*} 
which together with \eqref{eq:polydiff}, \eqref{eq:taumnest}, and \eqref{eq:taum0est} again implies \eqref{eq:finalpolyest}. The extension of Estimate \eqref{eq:finalpolyest} to entire functions $h$ of the form $h(z)=\sum\limits_{k=2}^\infty a_kz^k$, and an interpolation with \eqref{eq:trnormdiffoffdiagdecay} finishes the proof of the proposition.
\end{proof}
Proposition \ref{prop:trnormdecay} implies two corollaries, which will be useful in applications. For example, it follows from Corollary \ref{cor:traceclassifpolybdd} that the coefficient $a_1(\nu_E)$, see \eqref{eq:defa1nuE}, is well-defined.
\begin{corollary}\label{cor:traceclassifpolybdd}
Suppose that the sets $\mathsf{M}$, $\Lambda$, $\Omega\subseteq \mathbb{R}^d$ satisfy 
\begin{align*}
\mathsf{M}\subseteq \Lambda\cap\Omega.
\end{align*}
Moreover, assume that there exists $\beta\geq 0$ and a constant $C_\beta\geq 0$ such that, for all $r>0$,
\begin{align}\label{eq:numberofpointspolynomiallybounded}
\sharp \lbrace x\in \mathsf{M}_+:\ \dist(x,\Lambda\triangle\Omega)\leq r\rbrace \leq C_\beta\ang{r}^\beta,
\end{align}
where the set $\mathsf{M}_+\subseteq\mathbb{Z}^d$ is defined in \eqref{eq:deflatticepointneighb}.
\\
Then we have that
\begin{align*}
\chi_{\mathsf{M}}\big[h(A_\Lambda)-h(A_\Omega)\big]\in \mathfrak{S}_1.
\end{align*}
\end{corollary}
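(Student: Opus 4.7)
The plan is to apply Proposition~\ref{prop:trnormdecay} after a unit-cube resolution of the identity on both sides of the operator $T:=\chi_{\mathsf{M}}[h(A_\Lambda)-h(A_\Omega)]$.

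A first minor point is that Proposition~\ref{prop:trnormdecay} presupposes $\Lambda\subseteq\Omega$, whereas in the corollary the two sets are not comparable. I would bypass this by inserting the intersection as an intermediate set and writing
\begin{equation*}
h(A_\Lambda)-h(A_\Omega)=\bigl[h(A_\Lambda)-h(A_{\Lambda\cap\Omega})\bigr]-\bigl[h(A_\Omega)-h(A_{\Lambda\cap\Omega})\bigr],
\end{equation*}
so that both differences fall under the hypotheses of the proposition. The relevant ``difference sets'' $\Lambda\setminus(\Lambda\cap\Omega)=\Lambda\setminus\Omega$ and $\Omega\setminus(\Lambda\cap\Omega)=\Omega\setminus\Lambda$ are both subsets of $\Lambda\triangle\Omega$, so the weights in \eqref{eq:trnormorderedsubsets} are controlled by $\ang{\dist(\cdot,\Lambda\triangle\Omega)}^{-N}$. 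It therefore suffices to treat, say, $T_0:=\chi_{\mathsf{M}}[h(A_\Lambda)-h(A_{\Lambda\cap\Omega})]$; the other summand is analogous.

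The core step is then the estimate
\begin{equation*}
\|T_0\|_1\leq \sum_{a\in\mathsf{M}_+}\sum_{b\in\mathbb{Z}^d}\bigl\|\chi_{Q_a\cap\mathsf{M}}\,T_0\,\chi_{Q_b}\bigr\|_1,
\end{equation*}
with $\mathsf{M}_+$ as in \eqref{eq:deflatticepointneighb}. Finiteness of the double series implies $T_0\in\mathfrak{S}_1$: every finite subsum is controlled by the whole series, so $T_0$ is exhibited as an absolutely trace-norm-convergent sum of trace-class operators. For each summand, $\mathsf{M}\subseteq\Lambda\cap\Omega\subseteq\Lambda$ allows me to apply Proposition~\ref{prop:trnormdecay} with any $N\in\mathbb{N}$, giving
\begin{equation*}
\bigl\|\chi_{Q_a\cap\mathsf{M}}\,T_0\,\chi_{Q_b}\bigr\|_1 \lesssim \ang{\dist(a,\Lambda\triangle\Omega)}^{-N}\ang{\dist(b,\Lambda\triangle\Omega)}^{-N}\ang{a-b}^{-N}.
\end{equation*}

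The final summation is routine. Fixing $N>\beta+d+1$ and performing the $b$-sum first, the factor $\sum_{b\in\mathbb{Z}^d}\ang{a-b}^{-N}$ is bounded by a constant depending only on $d$ and $N$. For the remaining sum I would partition $\mathsf{M}_+$ according to the integer part of $\dist(\cdot,\Lambda\triangle\Omega)$ and invoke the hypothesis \eqref{eq:numberofpointspolynomiallybounded} to obtain
\begin{equation*}
\sum_{a\in\mathsf{M}_+}\ang{\dist(a,\Lambda\triangle\Omega)}^{-N}\lesssim \sum_{r=0}^{\infty}\ang{r}^{\beta-N}<\infty,
\end{equation*}
which finishes the proof. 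There is no genuine obstacle here; the only aspects requiring a little care are choosing $N$ large enough that the geometric weights beat the polynomial counting bound, and justifying the passage from finite truncations to the full double sum of trace-class operators.
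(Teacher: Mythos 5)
Your proof is correct and follows essentially the same route as the paper's: resolve the identity into unit cubes, apply Proposition~\ref{prop:trnormdecay}, sum out $b$, and use the counting hypothesis \eqref{eq:numberofpointspolynomiallybounded} to control the $a$-sum with $N>\beta+d+1$. The only place you are more explicit than the paper is the reduction to comparable sets: the paper merely says ``an application of the triangle inequality shows that we may restrict to $\Lambda\subseteq\Omega$,'' whereas you spell out the intermediate set $\Lambda\cap\Omega$ and observe that both $\Lambda\setminus\Omega$ and $\Omega\setminus\Lambda$ sit inside $\Lambda\triangle\Omega$, so the distance weights from the proposition are dominated by $\ang{\dist(\cdot,\Lambda\triangle\Omega)}^{-N}$; this is precisely the content of the paper's remark, filled in.
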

\begin{proof}
An application of the triangle inequality shows that we may restrict ourselves to the case that $\Lambda\subseteq \Omega$. Applying Proposition \ref{prop:trnormdecay} for $N=d+\beta+1$ and the assumption $\mathsf{M}\subseteq \Lambda$, one gets that
\begin{align}\label{eq:estproofofcor}
\big\|\chi_{\mathsf{M}}\big[h(A_\Lambda)-h(A_\Omega)\big]\big\|_1&\leq \sum\limits_{\substack{a\in \mathsf{M}_+\\ b\in\mathbb{Z}^d}}\big\|\chi_{Q_a\cap \Lambda}\big[h(A_\Lambda)-h(A_\Omega)\big]\chi_{Q_b}\big\|_1 \nonumber\\
&\lesssim \sum\limits_{\substack{a\in \mathsf{M}_+\\ b\in\mathbb{Z}^d}} \ang{\dist(a,\Omega\setminus\Lambda)}^{-d-\beta-1}\ang{a-b}^{-d-\beta-1} \nonumber\\
&\lesssim \sum\limits_{a\in \mathsf{M}_+}\ang{\dist(a,\Omega\setminus\Lambda)}^{-d-\beta-1} \nonumber\\
&\leq \sum\limits_{k=0}^\infty\sum\limits_{\substack{a\in \mathsf{M}_+ \\ k\leq \dist(a,\Omega\setminus\Lambda)\leq k+1}}\hspace{-0.8cm} \ang{k}^{-d-\beta-1} \nonumber\\
&\lesssim \sum\limits_{k=0}^\infty C_\beta\ang{k+1}^\beta \ang{k}^{-d-\beta-1} <\infty,
\end{align}
where the implied constants depend on $\beta$, $h$,  and $\sigma$. This finishes the proof of the corollary.
\end{proof}
The next corollary treats $L$-dependent sets $\mathsf{M}$, $\Lambda$, and $\Omega$. Here, the dependence on $L$ does not need to be linear, unlike for scaled sets. The corollary gives sufficient conditions under which the spatial restriction of the operator $h(A_{\Lambda})$ to $\mathsf{M}$ may be replaced by the corresponding restriction of $h(A_{\Omega})$, with a super-polynomially small error in trace norm, as $L\to\infty$. 
\begin{corollary}\label{cor:Ldependenttrnormest}
Let $\mathsf{M},\Lambda,\Omega\subseteq \mathbb{R}^d$ be sets that all possibly depend on the parameter $L\geq 1$. Suppose that
\begin{align}\label{eq:distMsymmdifforderL}
\mathsf{M}\subseteq \Lambda\cap \Omega \quad \text{and} \quad \dist(\mathsf{M},\Lambda\triangle \Omega )\gtrsim L.
\end{align}
Moreover, assume that there exists some $\beta\geq 0$ and a constant $C_\beta\geq 0$, independent of $L$, such that at least one of the following conditions is satisfied:
\begin{enumerate}[(i)]
\item \label{numberE+bounded} $\sharp \mathsf{M}_+ \leq C_\beta L^\beta$.
\item \label{previousestholds} Estimate \eqref{eq:numberofpointspolynomiallybounded} holds.
\end{enumerate}
Then one has that 
\begin{align*}
\big\| \chi_\mathsf{M} \big[h(A_\Lambda)-h(A_\Omega)\big]\big\|_1=\mathcal{O}(L^{-\infty}),
\end{align*}
as $L\to\infty$.
\end{corollary}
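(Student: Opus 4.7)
The plan is to deduce the bound by a direct application of Proposition \ref{prop:trnormdecay} combined with a dyadic-style accounting of lattice points in $\mathsf{M}_+$, where the gain of arbitrarily high negative powers of $L$ comes from the hypothesis $\dist(\mathsf{M},\Lambda\triangle\Omega)\gtrsim L$. A first preparatory move is to reduce to the case $\Lambda\subseteq\Omega$: by the triangle inequality applied to $h(A_\Lambda)-h(A_{\Lambda\cap\Omega})+h(A_{\Lambda\cap\Omega})-h(A_\Omega)$, the general case follows once the ordered case is known, since $\mathsf{M}\subseteq \Lambda\cap\Omega$ is preserved and $\Lambda\triangle\Omega$ is unchanged.

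Next I would decompose in unit cubes: using $\chi_\mathsf{M}\leq\chi_{\Lambda\cap\mathsf{M}}$ and the trivial inequality
\begin{equation*}
\bigl\|\chi_\mathsf{M}\bigl[h(A_\Lambda)-h(A_\Omega)\bigr]\bigr\|_1
\leq \sum_{a\in\mathsf{M}_+,\,b\in\mathbb{Z}^d}
\bigl\|\chi_{Q_a\cap\Lambda}\bigl[h(A_\Lambda)-h(A_\Omega)\bigr]\chi_{Q_b}\bigr\|_1,
\end{equation*}
Proposition \ref{prop:trnormdecay} with an arbitrary $N$ yields each summand $\lesssim \ang{\dist(a,\Omega\setminus\Lambda)}^{-N}\ang{\dist(b,\Omega\setminus\Lambda)}^{-N}\ang{a-b}^{-N}$. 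Summing first over $b\in\mathbb{Z}^d$ (choosing $N>d$) kills one of the distance factors and yields a finite constant depending only on $N$ and $d$. This leaves the task of estimating $\sum_{a\in\mathsf{M}_+}\ang{\dist(a,\Omega\setminus\Lambda)}^{-N}$.

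To use the hypothesis $\dist(\mathsf{M},\Lambda\triangle\Omega)\gtrsim L$, I observe that any $a\in\mathsf{M}_+$ lies within distance $\sqrt{d}/2$ from $\mathsf{M}$, so for $L$ sufficiently large, $\dist(a,\Omega\setminus\Lambda)\geq cL$ for some $c>0$. In case \eqref{numberE+bounded} the estimate is immediate:
\begin{equation*}
\sum_{a\in\mathsf{M}_+}\ang{\dist(a,\Omega\setminus\Lambda)}^{-N}\leq \sharp\mathsf{M}_+\cdot\ang{cL}^{-N}\lesssim L^{\beta-N},
\end{equation*}
which is $\mathcal{O}(L^{-\infty})$ since $N$ is arbitrary. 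In case \eqref{previousestholds} I slice by shells: decompose the sum over $a\in\mathsf{M}_+$ according to $k\leq \dist(a,\Lambda\triangle\Omega)\leq k+1$ and use \eqref{eq:numberofpointspolynomiallybounded} to bound the number of lattice points in each shell by $C_\beta\ang{k+1}^\beta$, exactly as in \eqref{eq:estproofofcor}. Since the shell index satisfies $k\gtrsim L$, the geometric-like tail yields
\begin{equation*}
\sum_{a\in\mathsf{M}_+}\ang{\dist(a,\Omega\setminus\Lambda)}^{-N}\lesssim \sum_{k\geq cL} C_\beta\ang{k+1}^\beta\ang{k}^{-N}\lesssim L^{\beta+1-N},
\end{equation*}
again arbitrarily small as $N\to\infty$.

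There is no genuine obstacle here, the result is really a bookkeeping consequence of Proposition \ref{prop:trnormdecay}. The only point that requires minor care is the reduction to $\Lambda\subseteq\Omega$ (making sure the intermediate set $\Lambda\cap\Omega$ still satisfies the hypotheses on distances and lattice counts), and verifying that the loss of $\sqrt{d}/2$ in passing from $\mathsf{M}$ to $\mathsf{M}_+$ is harmless for large $L$.
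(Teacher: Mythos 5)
Your proposal is correct and follows essentially the same route as the paper: reduce to $\Lambda\subseteq\Omega$ via the triangle inequality with intermediate set $\Lambda\cap\Omega$, tile by unit cubes and apply Proposition~\ref{prop:trnormdecay}, sum out $b$, and then exploit $\dist(a,\Omega\setminus\Lambda)\gtrsim L$ together with (i) the cardinality bound on $\mathsf{M}_+$ or (ii) the shell-counting estimate~\eqref{eq:numberofpointspolynomiallybounded}. The only cosmetic difference is in case~(ii): you sum over shells $k\gtrsim L$ directly to get $L^{\beta+1-N}$, whereas the paper first peels off a factor $L^{-N/2}$ and then reuses the uniform bound from Corollary~\ref{cor:traceclassifpolybdd}; these are the same bookkeeping.
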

\begin{proof}
As in the proof of Corollary \ref{cor:traceclassifpolybdd}, we may assume that $\Lambda\subseteq \Omega $. Moreover, similarly as in \eqref{eq:estproofofcor}, an application of Proposition \ref{prop:trnormdecay} yields
\begin{align*}
\big\| \chi_\mathsf{M} \big[h(A_\Lambda)-h(A_\Omega)\big]\big\|_1&\lesssim \sum\limits_{a\in \mathsf{M}_+} \ang{\dist(a,\Omega\setminus\Lambda)}^{-N},
\end{align*}
with implied constant depending on  $h$, $\sigma$, and $N\geq d+1$. If the estimate \eqref{numberE+bounded} holds, then one easily concludes with \eqref{eq:distMsymmdifforderL} that
\begin{align*}
\big\| \chi_\mathsf{M} \big[h(A_\Lambda)-h(A_\Omega)\big]\big\|_1\lesssim L^{\beta-N},
\end{align*}
with implied constant depending on $\beta, N, h$, and $\sigma$. Assuming \eqref{previousestholds} instead, we obtain as in the proof of Corollary \ref{cor:traceclassifpolybdd} that
\begin{align*}
\big\| \chi_\mathsf{M} \big[h(A_\Lambda)-h(A_\Omega)\big]\big\|_1&\lesssim L^{-N/2} \sum\limits_{a\in \mathsf{M}_+} \ang{\dist(a,\Omega\setminus\Lambda)}^{-N/2} \nonumber\\
&\lesssim L^{-N/2},
\end{align*}
where we chose $N\geq 2(d+\beta+1)$ and the implied constants depend on $N$, $\beta$, $h$, $\sigma$, and the constant in \eqref{eq:numberofpointspolynomiallybounded}. This finishes the proof of the corollary.
\end{proof}
\section{Proof of Theorem \ref{thm:abstractasympt}: localisation to the corners of $P$}
\label{sec:redasymptcorners}

Fix $\epsilon>0$ to be chosen later and recall the definition \eqref{eq:defV} of $\mathcal{V}=\mathcal{V}^{(\epsilon)}$, the one-sided $\epsilon$--neighbourhood of $\partial P$. As indicated in Subsection \ref{subsec:strategyproofs}, we split $\mathcal{V}$ into (almost) disjoint sets $\mathcal{N}(X)$, $X\in\Xi(P)$, such that $\mathcal{N}(X)$ contains the part of $\mathcal{V}$ close to the vertex $X$, see Figure~\ref{fig:1} on page~\pageref{fig:1}. This induces a corresponding partition of $\mathcal{V}_L=L\cdot\mathcal{V}$.

\subsection{Partition of $\mathcal{V}_L$} 
Fix a vertex $X\in\Xi(P)$ and recall that its adjacent edges are named $E^{(1)}(X)$ and $E^{(2)}(X)$, see Subsection \ref{subsec:notationPandcoeff}. It will be convenient to introduce the following two choices for the unit normal and the unit tangent vector at $X$:
\begin{align}
\begin{aligned}\label{eq:deftauXnuX}
(\tau_X^{(1)},\nu_X^{(1)})&:=(-\tau_{E^{(1)}(X)},\nu_{E^{(1)}(X)}),\\
(\tau_X^{(2)},\nu_X^{(2)})&:=(\tau_{E^{(2)}(X)},\nu_{E^{(2)}(X)}).
\end{aligned}
\end{align}
This definition ensures that $\tau^{(1)}_X$ and $\tau^{(2)}_X$, considered as vectors at $X$, point into the direction of the edges $E^{(1)}(X)$ and $E^{(2)}(X)$, respectively. For $j=1,2$, define the tubes
\begin{align}\label{eq:defTj}
T^{(j)}(X):&=\lbrace a\tau_X^{(j)}+b\nu_X^{(j)}: \ (a,b)\in [0,\tfrac{|E^{(j)}(X)|}{2}]\times [0,\epsilon]\rbrace,
\end{align}
and set
\begin{align}\label{eq:defN}
\mathsf{N}(X):=\big[T^{(1)}(X)\cup T^{(2)}(X)\cup B_{\epsilon}(0)\big]\cap C(X),
\end{align}
see Figure~\ref{fig:2} below. Then $\mathsf{N}(X)$ is a corner-neighbourhood of $0\in \Xi(P-X)$ and we define the corresponding neighbourhood at $X\in\Xi(P)$ by
\begin{align}\label{eq:defcalNX}
\mathcal{N}(X):=X+\mathsf{N}(X).
\end{align}
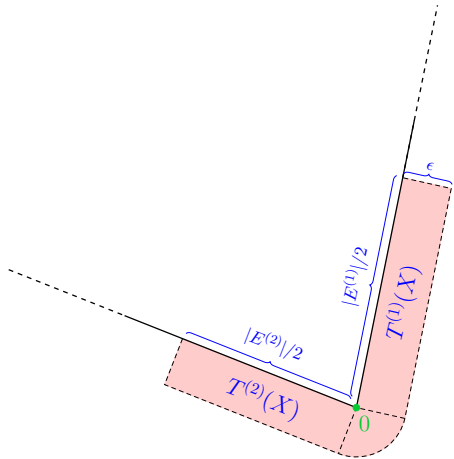
\begin{figure}[!b]
\hspace{-4cm}
\resizebox{6cm}{!}{
\begin{tikzpicture}
	    \filldraw[red!20!white]
(0,0) -- (-3.75,1.5) -- (-4.137,0.5) -- ([shift=(-111:1.072)] 0, 0) arc(-111:-13:1.072cm) -- (1.046,-0.235)--(2.046,4.765)--(1,5)--cycle;
		\draw[densely dashed] (1.046,-0.235)--(0,0)--(-0.387,-1);
		\draw[densely dashed]
		(-3.75,1.5)--(-4.137,0.5) -- ([shift=(-111:1.072)] 0, 0) arc(-111:-13:1.072cm) -- (2.046,4.765)--(1,5);
	\draw[thick] (-5,2)--(0,0)--(1.25,6.25);
	\draw[thick,dashed] 
		(-7.5,3) -- (-5,2) (1,5)--(1.75,8.75);
	\filldraw[green!80!blue] (0,0) circle(2pt);
	\node[green!80!blue] at (0.18,-0.35){\Large $0$};
	\node[blue,very thick,rotate=-21] at (-2,0.2){\Large $T^{(2)}(X)$};
	\node[blue,very thick,rotate=77] at (1,2.3){\Large $T^{(1)}(X)$};
	\draw [blue,decorate,decoration={brace,amplitude=3pt},yshift=4pt](-3.675,1.47)--(-0.1225,0.049) node [blue,midway,above=4pt,xshift=0pt,rotate=-21] {$|E^{(2)}|/2$};
	\draw [blue,decorate,decoration={brace,amplitude=3pt},xshift=-2.5pt, yshift=7pt](0,0)--(0.96,4.8) node [blue,midway,above=4pt,xshift=-2pt,yshift=-0.4pt,rotate=77] {$|E^{(1)}|/2$};
		\draw [blue,decorate,decoration={brace,amplitude=3pt},xshift=0.5pt, yshift=2.5pt](1,5)--(2.046,4.765) node
	[blue,midway,above=2pt,xshift=0pt,rotate=-13] {$\epsilon$};
\end{tikzpicture}
}
\caption{The neighbourhood $\mathsf{N}(X)$ for a vertex $X\in\Xi_>(P)$}\label{fig:2}
\end{figure}
Combining the scaled neighbourhoods $\mathcal{N}_L(X)=L\cdot\mathcal{N}(X)$, we arrive at the partition
\begin{align}\label{eq:NLunionofcornerneighb}
\mathcal{V}_L=\bigcup\limits_{X\in\Xi(P)}\mathcal{N}_L(X).
\end{align}
At this point we choose $\epsilon>0$ small enough such that the union \eqref{eq:NLunionofcornerneighb} is disjoint up to sets of zero two-dimensional Lebesgue measure.

\subsection{Reduction to individual corner contributions} 
\label{subsec:proofofabstrthm1}
As in the formulation of Theorem \ref{thm:abstractasympt}, let $\sigma\in\mathsf{W}^{\infty,1}(\mathbb{R}^2)$ and assume that $h$ is an entire function with $h(0)=0$. Notice that due to Lemma~\ref{lem:troponboundeddomains} the operators $h(A_{P_L})$ and $\chi_{P_L}h(A)\chi_{P_L}$ are trace class, the trace of the latter operator being computed in \eqref{eq:bulkapproxtrace}. This gives us the leading order term in the asymptotics \eqref{eq:abstractthmasymptformula}:
\begin{align}\label{eq:leadingorderasympt}
\tr h(A_{P_L})= L^2c_2+ \tr\big(\chi_{P_L}\big[h(A_{P_L})-h(A)\big]\chi_{P_L}\big).
\end{align}
Moreover, it follows from Corollary \ref{cor:traceclassifpolybdd} that
\begin{align*}
\chi_{P_L}\big[h_1(A_{P_L})-h_1(A)\big]\in \mathfrak{S}_1,
\end{align*}
where we recall the definition of the function $h_1(z)=h(z)-zh'(0)$. By construction, we have that 
\begin{align*}
\dist(P_L\setminus \mathcal{V}_L\, ,\,\mathbb{R}^2\setminus P_L)\gtrsim L,
\end{align*}
hence Corollary \ref{cor:Ldependenttrnormest} with Assumption \eqref{numberE+bounded} implies that
\begin{align*}
\tr\big(\chi_{P_L}\big[h(A_{P_L})-h(A)\big]\chi_{P_L}\big)&=\tr\big(\chi_{P_L}\big[h_1(A_{P_L})-h_1(A)\big]\big)\nonumber\\
&=\tr\big(\chi_{\mathcal{V}_L}\big[h_1(A_{P_L})-h_1(A)\big]\big)+\mathcal{O}(L^{-\infty}).
\end{align*}
In particular, we may from now on assume that $h$ vanishes to second order at $z=0$, such that $h_1=h$. Also, we have reduced the proof of Theorem \ref{thm:abstractasympt} to computing the asymptotics of
\begin{align*}
\tr\big(\chi_{\mathcal{V}_L}\big[h(A_{P_L})-h(A)\big]\big)=\sum\limits_{X\in\Xi(P)}\tr\big(\chi_{\mathcal{N}_L(X)}\big[h(A_{P_L})-h(A)\big]\big),
\end{align*}
employing \eqref{eq:NLunionofcornerneighb} for the latter equality. For fixed $X\in\Xi(P)$, the translation-invariance of the operator $A$ implies that
\begin{align*}
\tr\big(\chi_{\mathcal{N}_L(X)}\big[h(A_{P_L})-h(A)\big]\big)=\tr\big(\chi_{\mathsf{N}_L(X)}\big[h(A_{(P-X)_L})-h(A)\big]\big),
\end{align*}
with $\mathsf{N}(X)=\mathcal{N}(X)-X$, see \eqref{eq:defcalNX}. Moreover, it is not difficult to see that 
\begin{align*}
\dist\big(\mathsf{N}_L(X)\,,\,C(X)\triangle (P-X)_L\big)\gtrsim L.
\end{align*}
Hence, Corollary \ref{cor:Ldependenttrnormest} with Assumption \eqref{numberE+bounded} yields that
\begin{align*}
\tr\big(\chi_{\mathsf{N}_L(X)}\big[h(A_{(P-X)_L})-h(A)\big]\big)=\tr\big(\chi_{\mathsf{N}_L(X)}\big[h(A_{C(X)})-h(A)\big]\big)+\mathcal{O}(L^{-\infty}).
\end{align*}
We emphasise that the trace
\begin{align}\label{eq:localcornertrace}
\tr\big(\chi_{\mathsf{N}_L(X)}\big[h(A_{C(X)})-h(A)\big]\big)
\end{align}
depends on the polygon $P$ only via the directions $\tau_X^{(j)}$, $j=1,2$, and the length of the edges adjacent to $X$. To compute its asymptotics for each $X\in\Xi(P)$ is the object of the next section.

\section{Proof of Theorem \ref{thm:abstractasympt}: asymptotics for a fixed corner of $P$}\label{sec:sectopslocaltrasympt}

Throughout this section we fix a vertex $X\in\Xi(P)$. In particular, we shall omit all arguments, sub- and superscripts ``$(X)$''; for instance, we will write $C=C(X)$ and $\mathsf{N}_L=\mathsf{N}_L(X)$. As before, let $\sigma\in\mathsf{W}^{\infty,1}(\mathbb{R}^2)$ and  assume that $h=h_1$ is an entire function that vanishes to second order at $z=0$. The main purpose of this section is to obtain an asymptotic formula for \eqref{eq:localcornertrace}, which will complete the proof of Theorem \ref{thm:abstractasympt}.

\subsection{The $L$-term in the asymptotics} In the smooth boundary case, the sub-leading order term in the asymptotics \eqref{eq:trasymptsmoothboundary} is (at least morally) obtained via approximation of the operator $h(A_{\Omega_L})$ by half-space operators: around $x\in \partial\Omega_L$, the operator $h(A_{\Omega_L})$ is replaced by $h(A_{H_x})$ where $H_x$ is the half-space approximation of $\Omega_L$ at $x$. Similarly, the half-spaces $H^{(1)}$ and $H^{(2)}$, see \eqref{eq:defHjX}, locally model the sector $C$ in \eqref{eq:localcornertrace}, as long as one stays away from the apex of the sector. Thus, to get a first-order approximation to \eqref{eq:localcornertrace}, the strategy is to replace the sector $C$ by the half-space $H^{(j)}$, $j=1,2$, on the part of $\mathsf{N}_L$ close to $\partial H^{(j)}\cap\partial C$. This philosophy was used for right-angled cones in \cite{Thorsen1996} and \cite{Dietlein2018}. In the course of this section we will thus prove that
\begin{align}\label{eq:Lterm}
\tr\big(\chi_{\mathsf{N}_L}\big[h(A_C)-h(A)\big]\big)=\sum\limits_{j=1}^2 \tr\big(\chi_{T_L^{(j)}}\big[h(A_{H^{(j)}})-h(A)\big]\big) + \mathcal{O}(1),
\end{align}
as $L\to\infty$, see \eqref{eq:defTj} and Figure~\ref{fig:2} above for the definition of $T^{(j)}$. Here, the $\mathcal{O}(1)$-term contains the corner contribution at $X$ to the coefficient $c_0$ and a super-polynomial error in $L$. The approximation \eqref{eq:Lterm} is useful since the invariance of the operator $h(A_{H^{(j)}})-h(A)$ with respect to translations along the edge $E^{(j)}$ can be applied to scale out the length of the tube $T_L^{(j)}$. This is demonstrated in the next lemma, which hence provides the $L$-term in the asymptotics of \eqref{eq:localcornertrace}.
\begin{lemma}\label{lem:Lterm}
Let $j\in \lbrace 1,2\rbrace$ and set $S^{(j)}:=S_{E^{(j)}}$, compare with \eqref{eq:defSE}. Then one has that
\begin{align*}
\tr\big(\chi_{T_L^{(j)}}\big[h(A_{H^{(j)}})-h(A)\big]\big)=L\tfrac{|E^{(j)}|}{2}\tr\big(\chi_{S^{(j)}}\big[h(A_{H^{(j)}})-h(A)\big]\big) + \mathcal{O}(L^{-\infty}).
\end{align*}
\end{lemma}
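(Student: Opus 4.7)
The crucial structural fact is that $B := h(A_{H^{(j)}}) - h(A)$ commutes with every translation along $\tau_{E^{(j)}}$. Adopting the orthonormal frame $(\tau_X^{(j)}, \nu_X^{(j)})$ and writing points as $x = (x_1, x_2)$ in these coordinates, translation invariance forces the integral kernel to have the form $B(x,y) = K(x_1 - y_1, x_2, y_2)$ for some function $K$, so that the diagonal kernel $B(x,x) =: \tilde{K}(x_2)$ depends only on the normal coordinate. In the same frame the tube $T_L^{(j)}$ is the rectangle $[0, L|E^{(j)}|/2] \times [0, L\epsilon]$ while the strip $S^{(j)}$ is $[0,1] \times [0, \infty)$.

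My plan is to compute both traces as one-dimensional integrals of $\tilde{K}$ via Fubini and reduce the lemma to showing that the discrepancy between the integration ranges in the normal direction is super-polynomially small. First I would establish trace-class status: $\chi_{T_L^{(j)}} B$ is trace class by Corollary \ref{cor:traceclassifpolybdd} since $T_L^{(j)}$ is bounded, whereas $\chi_{S^{(j)}} B$ and $\chi_{[0,1] \times [L\epsilon, \infty)} B$ are trace class by the same corollary with condition \eqref{eq:numberofpointspolynomiallybounded} taking $\beta = 1$: their tangential extent is at most one unit, so the lattice-point count at normal distance at most $r$ from $\partial H^{(j)}$ grows linearly in $r$ with an $L$-independent constant. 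Combined with the continuity of the diagonal kernel from Lemma \ref{lem:kernelestimates} and the standard identification of the trace of a trace-class integral operator with the Lebesgue integral of its continuous diagonal, Fubini then yields
\begin{equation*}
\tr\bigl(\chi_{T_L^{(j)}} B\bigr) = L\tfrac{|E^{(j)}|}{2} \int_0^{L\epsilon} \tilde{K}(x_2)\, dx_2, \qquad \tr\bigl(\chi_{S^{(j)}} B\bigr) = \int_0^\infty \tilde{K}(x_2)\, dx_2.
\end{equation*}

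Subtracting these two identities, the lemma reduces to proving
\begin{equation*}
L\tfrac{|E^{(j)}|}{2} \int_{L\epsilon}^\infty \tilde{K}(x_2)\, dx_2 \;=\; L\tfrac{|E^{(j)}|}{2}\, \tr\bigl(\chi_{[0,1]\times[L\epsilon,\infty)} B\bigr) \;=\; \mathcal{O}(L^{-\infty}).
\end{equation*}
For this final step I would invoke Corollary \ref{cor:Ldependenttrnormest} with $\Lambda = H^{(j)}$, $\Omega = \mathbb{R}^2$ (so that $\Lambda \triangle \Omega = \mathbb{R}^2 \setminus H^{(j)}$) and $\mathsf{M} = [0,1] \times [L\epsilon, \infty)$ in the $(\tau_X^{(j)}, \nu_X^{(j)})$ frame. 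Both hypotheses are met: $\dist(\mathsf{M}, \mathbb{R}^2 \setminus H^{(j)}) = L\epsilon \gtrsim L$, and condition (ii) of the corollary holds with $\beta = 1$ and an $L$-independent constant by the tangential-extent remark above. Hence $\|\chi_\mathsf{M} B\|_1 = \mathcal{O}(L^{-\infty})$, and the polynomial prefactor $L|E^{(j)}|/2$ is absorbed.

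The main technical point to handle with care is the passage from trace-class status to the integral-of-diagonal formula on the unbounded strip $[0,1]\times[L\epsilon,\infty)$, which I would justify by approximating it by the bounded rectangles $[0,1] \times [L\epsilon, n]$, applying the diagonal-integral identity on each (where it follows directly from Lemma \ref{lem:kernelestimates} together with translation invariance), and then taking $n \to \infty$ using trace-norm convergence of $\chi_{[0,1]\times[L\epsilon,n]} B \to \chi_{[0,1]\times[L\epsilon,\infty)} B$. All remaining steps are direct applications of the trace norm estimates of Section \ref{sec:trnormestimates}.
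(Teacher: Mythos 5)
Your proposal is correct and relies on essentially the same two ingredients as the paper's proof: translation-invariance of $h(A_{H^{(j)}})-h(A)$ along $\tau_{E^{(j)}}$, combined with the trace-equals-integral-of-continuous-diagonal identity, to reduce the traces to one-dimensional integrals of the normal profile, and Corollary \ref{cor:Ldependenttrnormest} to control the super-polynomially small remainder. The only difference is the order of operations: the paper first replaces $T_L^{(j)}$ by the semi-infinite strip $L\tfrac{|E^{(j)}|}{2}\cdot S^{(j)}$ via Corollary \ref{cor:Ldependenttrnormest} (where condition \eqref{eq:numberofpointspolynomiallybounded} needs $\beta=2$ because the tangential extent grows like $L$) and then scales out the tangential variable, whereas you scale both traces first and then apply the corollary to the unit-width tail $[0,1]\times[L\epsilon,\infty)$, where the cleaner $\beta=1$ suffices.
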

\begin{proof}
Fix $j\in \lbrace 1,2\rbrace$ and omit the superscript  ``$(j)$'' for the duration of the proof. Moreover, we may assume after a suitable rotation that $H=\mathbb{R}\times [0,\infty)$ and $S=[0,1]\times[0,\infty)$. Then it follows from Corollary \ref{cor:Ldependenttrnormest} that
\begin{align*}
\tr\big(\chi_{T_L}\big[h(A_{H})-h(A)\big]\big)=\tr\big(\chi_{L\tfrac{|E|}{2}\cdot S}\big[h(A_H)-h(A)\big]\big)+\mathcal{O}(L^{-\infty}).
\end{align*}
Here, the trace on the right-hand side is well-defined due to Corollary \ref{cor:traceclassifpolybdd}. Also, the invariance of the operator $h(A_H)-h(A)$ with respect to translations in the $x_1$-direction implies that, for all $x=(x_1,x_2)\in\mathbb{R}^2$,
\begin{align*}
(h(A_H)-h(A))(x_1,x_2;x_1,x_2)=(h(A_H)-h(A))(0,x_2;0,x_2).
\end{align*}
Thus, a change of coordinates in the $x_1$-variable finishes the proof of the lemma.
\end{proof}

\subsection{Regularisation of sector operators} The key to finding the constant order term in the asymptotics of \eqref{eq:localcornertrace} is a trace-class regularisation of the sector operator $h(A_{C})$ with the help of the half-space operators $h(A_{H^{(j)}})$, $j=1,2$, and the full-space operator $h(A)$. This regularisation is given in the next proposition. For its proof we consider spatial restrictions of $h(A_{C})$ to different parts of the sector $C$ and then compare these to the operators $h(A_{H^{(j)}})$, $j=1,2$, or $h(A)$, depending on which part of the sector we localise to. In that respect we follow the ideas of \cite{Dietlein2018}. However, instead of only looking at a right-angled convex cone, we tackle sectors of any angle; in particular, we also deal with concave sectors. Moreover, our regularisation for convex sectors $C$, see \eqref{eq:regularisationconvex}, does not require a partition of $C$. At the same time, it is independent of the scaling parameter $L$, in contrast to the ones given in \cite[Thm. 2.2]{Dietlein2018}.
\begin{proposition}\label{prop:regularisationcornerop}
Let $L\geq 1$. If $X\in\Xi_<(P)$, then the operator 
\begin{align}\label{eq:regularisationconvex}
Z:=\chi_{C}\big[h(A_{C})-h(A_{H^{(1)}})-h(A_{H^{(2)}})+h(A)\big] 
\end{align}
is trace class with
\begin{align}\label{eq:trnormestconvcorner}
\|\chi_{\mathbb{R}^2\setminus B_L(0)}Z\|_1 = \mathcal{O}(L^{-\infty}),
\end{align}
as $L\to\infty$.
\\
If $X\in\Xi_>(P)$, then the operators 
\begin{align*}
Z_1&:=\chi_{H^{(1)}\cap H^{(2)}}\big[h(A_{C})-h(A)\big],\\
Z_2&:=\chi_{C\setminus H^{(1)}}\big[h(A_{C})-h(A_{H^{(2)}})\big],\\
Z_3&:=\chi_{C\setminus H^{(2)}}\big[h(A_{C})-h(A_{H^{(1)}})\big],
\end{align*}
are trace class and, for every $j=1,2,3$, one has that
\begin{align*}
\|\chi_{\mathbb{R}^2\setminus B_L(0)}Z_j\|_1=\mathcal{O}(L^{-\infty}),
\end{align*}
as $L\to\infty$.
\end{proposition}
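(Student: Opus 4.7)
\emph{Concave case} ($X\in\Xi_>(P)$). Each of $Z_1,Z_2,Z_3$ has the form $\chi_{\mathsf{M}}[h(A_\Omega)-h(A_\Lambda)]$ with $\mathsf{M}\subseteq\Lambda\cap\Omega$. The plan is to verify geometrically that in each case $\Omega\triangle\Lambda$ sits in a wedge separated from $\mathsf{M}$ by the ``intersection wedge'' $H^{(1)}\cap H^{(2)}$, so $\dist(x,\Omega\triangle\Lambda)\gtrsim|x|$ for every $x\in\mathsf{M}$. For $Z_1$, for instance, $\mathsf{M}=H^{(1)}\cap H^{(2)}$, $\Lambda=C=H^{(1)}\cup H^{(2)}$, $\Omega=\mathbb{R}^2$, and $\Omega\setminus\Lambda=(H^{(1)})^c\cap(H^{(2)})^c$ is the wedge opposite to $\mathsf{M}$ across the vertex; the verifications for $Z_2$ and $Z_3$ are entirely analogous. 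Combined with the elementary count $\sharp\{a\in\mathsf{M}_+:\dist(a,\Omega\triangle\Lambda)\leq r\}\lesssim\ang{r}^2$ (lattice points in a planar disc), Corollary~\ref{cor:traceclassifpolybdd} gives $Z_j\in\mathfrak{S}_1$, and Corollary~\ref{cor:Ldependenttrnormest}\,(ii) applied with $\mathsf{M}$ replaced by $\mathsf{M}\setminus B_L(0)$ (on which $\dist(\cdot,\Omega\triangle\Lambda)\gtrsim L$) yields $\|\chi_{\mathbb{R}^2\setminus B_L(0)}Z_j\|_1=\mathcal{O}(L^{-\infty})$.

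\emph{Convex case} ($X\in\Xi_<(P)$). Here $Z$ is a second-order alternating difference, and the obvious splitting
\[
Z=\chi_C\bigl[h(A_C)-h(A_{H^{(1)}})\bigr]-\chi_C\bigl[h(A_{H^{(2)}})-h(A)\bigr]
\]
yields two pieces that are individually \emph{not} trace class: the symmetric differences $H^{(1)}\setminus C$ and $\mathbb{R}^2\setminus H^{(2)}$ share the boundary $\partial H^{(2)}$, and the associated distance functions on $C$ are bounded along the edge $E^{(2)}$. The double cancellation has to be made algebraically explicit. The key is the convex-corner identity
\[
\chi_C-\chi_{H^{(1)}}-\chi_{H^{(2)}}+1=\chi_{(H^{(1)})^c\cap(H^{(2)})^c},
\]
together with $\chi_C\,\chi_{(H^{(j)})^c}=0$. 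Expanding $h(z)=\sum_{k\geq 2}a_kz^k$ and absorbing the leftmost characteristic functions via $\chi_C\chi_{H^{(j)}}=\chi_C$, I plan to rewrite $\chi_C[A_C^k-A_{H^{(1)}}^k-A_{H^{(2)}}^k+A^k]$ as a finite combination of operator products, each of which carries either an insertion of $\chi_{(H^{(1)})^c\cap(H^{(2)})^c}$ or simultaneous insertions of $\chi_{(H^{(1)})^c}$ and $\chi_{(H^{(2)})^c}$ at different intermediate positions. As a sanity check, for $k=1$ the sum collapses to
\[
\chi_C\bigl[A_C-A_{H^{(1)}}-A_{H^{(2)}}+A\bigr]=\chi_C\,A\,\chi_{(H^{(1)})^c\cap(H^{(2)})^c}.
\]
The Hilbert--Schmidt unit-cube bounds of Lemma~\ref{lem:trnormestoffdiag} combined with Peetre's inequality, exactly as in the proof of Proposition~\ref{prop:trnormdecay}, then produce, for every $N\in\mathbb{N}$,
\[
\|\chi_{Q_a\cap C}\,Z\,\chi_{Q_b}\|_1\lesssim\ang{|a|}^{-N}\ang{a-b}^{-N},
\]
using that for $a\in C$ both $\dist(a,(H^{(1)})^c\cap(H^{(2)})^c)$ and $\max(\dist(a,\partial H^{(1)}),\dist(a,\partial H^{(2)}))$ are $\gtrsim|a|$ (the interior angle at the vertex is $<\pi$). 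Summing over lattice cubes in $C$ as in Corollary~\ref{cor:traceclassifpolybdd} gives the trace-class property; restricting to $|a|\geq L$ gives the super-polynomial decay.

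\emph{Main obstacle.} The principal technical difficulty lies in making the combinatorial identity for general $k\geq 2$ fully explicit and in verifying that, in each surviving monomial, at least one opposite-wedge factor (or a suitable pair of opposite-half-plane factors) is produced after left-multiplication by $\chi_C$---without appealing to convexity of an ambient \emph{set} as in~\cite[Thm.~2.5]{Dietlein2018}. Once this combinatorial step is carried out, the remainder is routine planar geometry together with the applications of the corollaries of Section~\ref{sec:trnormestimates} described above.
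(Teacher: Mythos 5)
Your treatment of the concave case ($X\in\Xi_>(P)$) is correct and coincides with the paper's: each $Z_j$ is of the form $\chi_{\mathsf M}[h(A_\Lambda)-h(A_\Omega)]$ with $\mathsf M\subseteq\Lambda\cap\Omega$ and $\Omega\triangle\Lambda$ a wedge separated from $\mathsf M$ by an angle, so $\dist(a,\Omega\triangle\Lambda)\gtrsim|a|$ on $\mathsf M$, the count \eqref{eq:numberofpointspolynomiallybounded} holds with $\beta=2$, and Corollaries~\ref{cor:traceclassifpolybdd} and \ref{cor:Ldependenttrnormest} give the result.

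For the convex case you choose a genuinely different route, and you also correctly diagnose why the naive two-piece split fails. But what you offer is only a \emph{plan}: the combinatorial identity on which everything rests is stated, checked for $k=1$, and then explicitly flagged as the ``main obstacle'' rather than proved, and the summability in $k$ of the resulting constants (the surviving monomial count grows like $4^k$, so one must invoke the entire-function bound as in \eqref{eq:chsigmaNbound}) is not addressed. As written this is a gap. For what it is worth, your plan is sound: writing $\alpha'=1-\chi_{H^{(1)}}$, $\beta'=1-\chi_{H^{(2)}}$ and using $\chi_C A_{H^{(j)}}^k=\chi_C(A\chi_{H^{(j)}})^k$ gives
\begin{align*}
\chi_C\bigl[A_C^k-A_{H^{(1)}}^k-A_{H^{(2)}}^k+A^k\bigr]
=\chi_C\bigl[(A\chi_{H^{(1)}}\chi_{H^{(2)}})^k-(A\chi_{H^{(1)}})^k-(A\chi_{H^{(2)}})^k+A^k\bigr],
\end{align*}
and expanding each $\chi_{H^{(1)}}\chi_{H^{(2)}}=1-\alpha'-\beta'+\alpha'\beta'$ shows, by inclusion--exclusion, that the surviving monomials are exactly those not built from $\{1,-\alpha'\}$ alone nor from $\{1,-\beta'\}$ alone, i.e.\ each carries an $\alpha'\beta'$ or a pair $\alpha'$, $\beta'$ in distinct slots; your geometric input $\max(a\cdot\nu^{(1)},a\cdot\nu^{(2)})\gtrsim|a|$ on the convex sector then yields the decay. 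So the approach works, but it needs to be carried out.

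The paper avoids this combinatorics entirely. It bisects the convex sector along the angle bisector into
\begin{align*}
C_l=\{y\in C:\ y\cdot(\nu^{(2)}-\nu^{(1)})\geq 0\},\qquad
C_r=\{y\in C:\ y\cdot(\nu^{(1)}-\nu^{(2)})\geq 0\},
\end{align*}
and rewrites
\begin{align*}
Z=\chi_{C_l}\bigl[h(A_C)-h(A_{H^{(1)}})\bigr]
+\chi_{C_r}\bigl[h(A_C)-h(A_{H^{(2)}})\bigr]
+\chi_{C_l}\bigl[h(A)-h(A_{H^{(2)}})\bigr]
+\chi_{C_r}\bigl[h(A)-h(A_{H^{(1)}})\bigr].
\end{align*}
Each of the four summands is a single first-order difference $\chi_{\mathsf M}[h(A_\Lambda)-h(A_\Omega)]$ in which $\mathsf M$ is a half-sector and $\Omega\triangle\Lambda$ is a half-plane complement or a wedge bounded away from $\mathsf M$ by the bisector, so Corollaries~\ref{cor:traceclassifpolybdd} and \ref{cor:Ldependenttrnormest} apply directly, exactly as in the concave case. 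That bisection is the idea you were missing; it replaces the entire algebraic expansion with one geometric partition.
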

\begin{proof}
As in the statement of the proposition we treat convex and concave corners separately. \\
\textit{Convex corners, i.e.~$X\in\Xi_<(P)$:}
we divide the semi-infinite sector $C$ into two halves, 
\begin{align*}
C_l&:=\lbrace y\in C: y\cdot (\nu^{(2)}-\nu^{(1)})\geq 0 \rbrace, \\
C_r&:=\lbrace y\in C: y\cdot (\nu^{(1)}-\nu^{(2)})\geq 0 \rbrace,
\end{align*}
where we recall the definition \eqref{eq:deftauXnuX} for $\nu^{(j)}=\nu_X^{(j)}$.
Then one can write
\begin{align*}
Z&=\chi_{C_l}\big[ h(A_C)-h(A_{H^{(1)}})\big]+\chi_{C_r}\big[ h(A_C)-h(A_{H^{(2)}})\big]\nonumber\\
&\ \ \ +\chi_{C_l}\big[ h(A)-h(A_{H^{(2)}})\big]+\chi_{C_r}\big[ h(A)-h(A_{H^{(1)}})\big].
\end{align*}
Thus, Corollary \ref{cor:traceclassifpolybdd} implies that the operator $Z$ is trace class since the estimate \eqref{eq:numberofpointspolynomiallybounded} with $\beta=1$ is easily checked for all involved sets. Moreover, applying the same splitting for $Z$, the bound \eqref{eq:trnormestconvcorner} follows from Corollary \ref{cor:Ldependenttrnormest}.\\
\textit{Concave corners, i.e.~$X\in \Xi_>(P)$:} in the concave case we may directly apply Corollaries \ref{cor:traceclassifpolybdd} and \ref{cor:Ldependenttrnormest} to the operators $Z_j$, $j=1,2,3$; no further partition is required. The claim follows as in the convex case, which finishes the proof of the proposition.
\end{proof}

\subsection{Contributions from non-right-angled corners}
In the next subsection we will apply the regularisation for the sector operator $h(A_{C})$ from Proposition \ref{prop:regularisationcornerop} to find the asymptotics of the trace \eqref{eq:localcornertrace}. As it turns out during this process, non-perpendicular edges $E^{(1)}$ and $E^{(2)}$ generate an extra term of constant order. Technically, this relies on the fact that the tubes $T^{(j)}$, which are responsible for the $L$-term in the asymptotics, see Lemma \ref{lem:Lterm}, are rectangles. In this sense, they are not compatible with interior angles $\gamma\notin \lbrace \tfrac{\pi}{2}, \tfrac{3\pi}{2}\rbrace$.
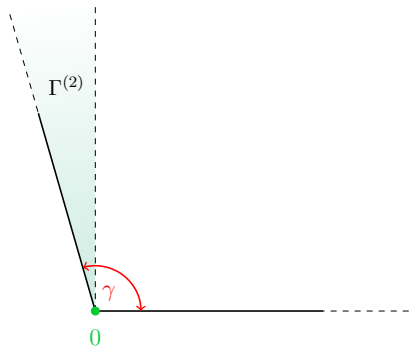
\begin{figure}[b!]
\resizebox{6cm}{!}{
\begin{tikzpicture}
\clip (-1.75,-1) rectangle (6,6);
      \shade[shading=radial, inner color=green!60!blue!20!white, outer color= white] (0,0) circle (5.5cm);
      \fill[color=white] (0,0)--(0,5.4) arc (90:-254:5.4)--(0,0);
    \draw[thick]
		(-1,3.5) coordinate (a) -- (0,0)
		coordinate (b) -- 
		(4,0) coordinate (c)
		pic["\large {\color{red}$\gamma$}", draw=red, <->, angle 
		eccentricity=0.5, angle radius=0.8cm]
		{angle=c--b--a};	
	\draw[dashed] (0,0) -- (0,5.4);
	\draw[dashed] 
		(-1.5,5.25) -- (-1,3.5); 
	\draw[dashed]		
		(5.5,0) -- (4,0);
	\filldraw[green!80!blue] (0,0) circle(2pt);
	\draw (0,-0.2) node[anchor=north, text=green!80!blue]{\large $0$};
	\node at (-0.5,4) {$\Gamma^{(2)}$};
\end{tikzpicture}
}
\vspace{-0.2cm}
\caption{The sector $\Gamma^{(2)}$ for $\gamma\in(\tfrac{\pi}{2},\pi)$}\label{fig:3}
\end{figure}

For the fixed vertex $X\in\Xi(P)$, introduce the following sectors, which depend on $j\in \lbrace 1,2\rbrace$, see Figure \ref{fig:3}:
\begin{align}\label{eq:defGammajX}
\Gamma^{(j)}:=\begin{cases}\lbrace a\tau^{(j)}+b\nu^{(j)}:\ 0\leq a < \cot(\gamma)b \rbrace,& \ \gamma\in(0,\tfrac{\pi}{2}]\cup (\pi,\tfrac{3\pi}{2}],\\
\lbrace a\tau^{(j)}+b\nu^{(j)}:\ \cot(\gamma)b< a \leq 0\rbrace,& \ \gamma\in[\tfrac{\pi}{2},\pi)\cup [\tfrac{3\pi}{2},2\pi).
\end{cases}
\end{align}
We will see in Subsection \ref{subsec:cornercontributions} that non-perpendicular edges $E^{(1)}$ and $E^{(2)}$ contribute the constants
\begin{align}\label{eq:traceGammaj}
\tr\big(\chi_{\Gamma^{(j)}}\big[h(A_{H^{(j)}})-h(A)\big]\big), \ j=1,2,
\end{align}
to the asymptotics of \eqref{eq:localcornertrace}. These traces are well-defined in view of Corollary \ref{cor:traceclassifpolybdd} and the following lemma provides an alternative characterisation of \eqref{eq:traceGammaj}.
\begin{lemma}\label{lem:notrectangularpart}
Let $X\in\Xi(P)$ be a vertex of $P$ and let 
$\Gamma^{(j)}$, $j=1,2$, be the sectors introduced in \eqref{eq:defGammajX}. Moreover, let $S^{(j)}$ be the strip of unit width defined in Lemma \ref{lem:Lterm}. Then we have that, for $j=1,2$,
\begin{align*}
\tr\big(\chi_{\Gamma^{(j)}(X)}\big[h(A_{H^{(j)}})-h(A)\big]\big)&=|\cot(\gamma)|\tr\big(\chi_{S^{(j)}} M(x\cdot \nu_{H^{(j)}})\big[h(A_{H^{(j)}})-h(A)\big]\big). 
\end{align*}
\end{lemma}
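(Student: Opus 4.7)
The plan is to exploit the translation invariance of $h(A_{H^{(j)}})-h(A)$ along the edge $E^{(j)}$ to reduce its diagonal kernel to a one-variable function, and then to simply compare the two integrals by Fubini.

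\textbf{Step 1: Reduction to a one-variable diagonal kernel.} Fix $j\in\{1,2\}$ and rotate coordinates so that $\tau^{(j)}=e_1$, $\nu^{(j)}=e_2$, so that $H^{(j)}=\mathbb{R}\times[0,\infty)$. Since $A$ is translation invariant and the half-space $H^{(j)}$ is invariant under shifts in $e_1$, the operator $T:=h(A_{H^{(j)}})-h(A)$ commutes with translations in the $e_1$-direction. By Lemma \ref{lem:kernelestimates} its Schwartz kernel $K(x,y)$ is continuous on $H^{(j)}\times H^{(j)}$, hence the invariance $K(x+te_1,y+te_1)=K(x,y)$ forces the diagonal to depend only on the second coordinate:
\begin{align*}
k(b) := K\bigl((a,b),(a,b)\bigr) \quad\text{is independent of }a.
\end{align*}

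\textbf{Step 2: Trace-class structure and the diagonal trace formula.} By Corollary \ref{cor:traceclassifpolybdd} the operators $\chi_{\Gamma^{(j)}}T$ and $\chi_{S^{(j)}}M(x_2)T$ are trace class, since both $\Gamma^{(j)}$ and $S^{(j)}$ satisfy the polynomial growth hypothesis \eqref{eq:numberofpointspolynomiallybounded} relative to $\partial H^{(j)}=\Omega\setminus\Lambda$ (with $\beta=1$), and the weight $x_2$ on $S^{(j)}$ is absorbed into the super-polynomial decay of $K$ in $b$ that the same corollary provides. Using the continuity of $K$ on $H^{(j)}\times H^{(j)}$ and exhausting $\Gamma^{(j)}$ and $S^{(j)}$ by bounded subsets, the traces are given by the absolutely convergent diagonal integrals
\begin{align*}
\tr\bigl(\chi_{\Gamma^{(j)}}T\bigr)=\int_{\Gamma^{(j)}}k(b)\,dx,\qquad
\tr\bigl(\chi_{S^{(j)}}M(x_2)T\bigr)=\int_{S^{(j)}}b\,k(b)\,dx.
\end{align*}

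\textbf{Step 3: Fubini on the two regions.} By the definition \eqref{eq:defSE} of $S^{(j)}=[0,1]\times[0,\infty)$,
\begin{align*}
\int_{S^{(j)}}b\,k(b)\,da\,db=\int_0^1 da\int_0^\infty b\,k(b)\,db=\int_0^\infty b\,k(b)\,db.
\end{align*}
For $\Gamma^{(j)}$ we split on the sign of $\cot\gamma$ exactly as in \eqref{eq:defGammajX}. When $\gamma\in(0,\tfrac\pi2]\cup(\pi,\tfrac{3\pi}{2}]$, we have $\cot\gamma\geq 0$ and
\begin{align*}
\int_{\Gamma^{(j)}}k(b)\,dx=\int_0^\infty db\int_0^{\cot(\gamma)b}da\,k(b)=\cot(\gamma)\int_0^\infty b\,k(b)\,db.
\end{align*}
When $\gamma\in[\tfrac\pi2,\pi)\cup[\tfrac{3\pi}{2},2\pi)$, we have $\cot\gamma\leq 0$ and the interval of integration in $a$ is $[\cot(\gamma)b,0]$, which produces $-\cot(\gamma)\int_0^\infty b\,k(b)\,db$. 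In either case the result is $|\cot\gamma|\int_0^\infty b\,k(b)\,db$, which combined with the formula for $S^{(j)}$ proves the lemma.

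\textbf{Expected obstacle.} The one genuine technical point is justifying the trace-equals-diagonal-integral identity on the \emph{unbounded} regions $\Gamma^{(j)}$ and $S^{(j)}$: one must combine the continuity of the kernel (Lemma \ref{lem:kernelestimates}), the trace-class bound from Corollary \ref{cor:traceclassifpolybdd}, and the super-polynomial decay of $k(b)$ as $b\to\infty$ (itself a consequence of Proposition \ref{prop:trnormdecay}) to pass to the limit in an exhaustion by bounded sets. Everything else is bookkeeping on cases of $\gamma$.
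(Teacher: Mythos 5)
Your proof is correct and follows the same overall strategy as the paper: rotate to standard coordinates, use translation invariance to reduce the diagonal kernel to a function of the normal variable, verify trace-class membership, and then convert traces to diagonal integrals and compare by Fubini. Two small technical points differ. First, for the trace-class claim on $\chi_{S^{(j)}}M(x_2)T$, Corollary \ref{cor:traceclassifpolybdd} as stated does not cover the multiplicative weight $M(x_2)$; the paper instead splits $S^{(j)}$ into unit cubes and applies Proposition \ref{prop:trnormdecay} directly, so that the per-cube trace norms decay super-polynomially in the normal distance and the linear weight is harmless. Your parenthetical remark that ``the weight $x_2$ is absorbed into the super-polynomial decay'' is the right idea, but it is really an application of Proposition \ref{prop:trnormdecay} (i.e.\ of the proof of the corollary), not of the corollary's statement. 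Second, the ``expected obstacle'' you flag -- passing from the trace of a trace-class operator with continuous kernel to the absolutely convergent diagonal integral over an unbounded region -- is exactly where the paper invokes Brislawn's theorem (\cite[Thm.~3.5]{Brislawn1988}) rather than running an exhaustion argument; citing that theorem closes the gap cleanly. Finally, the paper reduces to the single case $\gamma\in(0,\pi/2]$, $j=2$ by a symmetry argument, whereas you treat both sign cases of $\cot\gamma$ explicitly; both are fine.
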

\begin{proof}
Fix $X\in\Xi(P)$. Without loss of generality suppose that $\gamma\in (0,\pi/2]$ and $j=2$, and, for the matter of readability, omit the superscript ``$(2)$''. The other cases can be reduced to this one via a symmetry argument. After a suitable rotation we may also assume that $H=\mathbb{R}\times [0,\infty)$, $\Gamma=\lbrace (x_1,x_2)\in H: 0\leq x_1\leq \cot(\gamma)x_2\rbrace$, and $S=[0,1]\times [0,\infty)$. Splitting the strip $S$ into unit cubes, one easily gets from Proposition \ref{prop:trnormdecay} that the operator
\begin{align*}
\chi_S M(x_2)\big[h(A_H)-h(A)\big]
\end{align*}
is trace-class. In view of Corollary \ref{cor:traceclassifpolybdd}, we likewise have that 
\begin{align}\label{eq:gammaoptraceclass}
\chi_\Gamma\big[h(A_H)-h(A)\big]\in\mathfrak{S}_1.
\end{align}
Furthermore, as in the proof of Lemma \ref{lem:Lterm} the invariance of the operator $h(A_H)-h(A)$ with respect to translations in the $x_1$-direction implies that, for all $x=(x_1,x_2)\in\mathbb{R}^2$,
\begin{align*}
(h(A_H)-h(A))(x_1,x_2;x_1,x_2)=(h(A_H)-h(A))(0,x_2;0,x_2).
\end{align*}
By Lemma \ref{lem:kernelestimates} this kernel is continuous on $\Gamma\times\Gamma\subset H\times H$, so \cite[Thm. 3.5]{Brislawn1988} and \eqref{eq:gammaoptraceclass} ensure that it is integrable on $\Gamma\times\Gamma$. Hence, we may apply Fubini's theorem to arrive at
\begin{align*}
\tr\big(\chi_\Gamma\big[h(A_H)-h(A)\big]\big)&=\int\limits_\Gamma dx_1dx_2\, (h(A_H)-h(A))(0,x_2;0,x_2)\\
&=\int\limits_0^\infty dx_2 \!\!\!\int\limits_0^{\cot(\gamma)x_2}\!\!\!\!\!\! dx_1\,(h(A_H)-h(A))(0,x_2;0,x_2) \nonumber\\
&=\cot(\gamma)\int\limits_0^\infty dx_2\, x_2\, (h(A_H)-h(A))(0,x_2;0,x_2)\nonumber\\
&=\cot(\gamma)\int\limits_0^1 dx_1\int\limits_0^\infty dx_2\, x_2\,(h(A_H)-h(A))(0,x_2;0,x_2)\nonumber\\
&=\cot(\gamma)\tr\big(\chi_S M(x_2) \big[h(A_H)-h(A)\big]\big).
\end{align*}
This finishes the proof of the lemma.
\end{proof}

\subsection{Complete asymptotics}
\label{subsec:cornercontributions}
Equipped with Proposition~\ref{prop:regularisationcornerop} and Lemmas~\ref{lem:Lterm} and \ref{lem:notrectangularpart}, we are now ready to extract the asymptotics from \eqref{eq:localcornertrace}. As the regularisation for the sector operators in Proposition~\ref{prop:regularisationcornerop} depends on the type of the sector, we naturally have to distinguish convex and concave corners of the polygon $P_L$. Propositions \ref{prop:convcorners} and \ref{prop:conccorners} contain the respective results.
\begin{proposition}[Convex corners]\label{prop:convcorners}
Let $X\in\Xi_<(P)$. Then we have that
\begin{align*}
\tr\big(\chi_{\mathsf{N}_L}&[h(A_{C})-h(A)]\big)=L\,\sum\limits_{j=1}^2\tfrac{|E^{(j)}|}{2}\tr\big(\chi_{S^{(j)}}\big[h(A_{H^{(j)}})-h(A)\big]\big)
\nonumber\\
&+\tr\big(\chi_{C}\big[h(A_{C})-h(A_{H^{(1)}})-h(A_{H^{(2)}})+h(A)\big]\big)\nonumber\\
&-\cot(\gamma)\sum\limits_{j=1}^2\tr\big(\chi_{S^{(j)}} M(x\cdot \nu_{H^{(j)}})\big[h(A_{H^{(j)}})-h(A)\big]\big)+\mathcal{O}(L^{-\infty}),
\end{align*}
as $L\to\infty$.
\end{proposition}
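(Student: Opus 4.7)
The strategy is to apply Proposition~\ref{prop:regularisationcornerop} to split $h(A_C)-h(A)$ into two half-space operators and a sector regularisation, localise each half-space contribution to the rectangular tube $T_L^{(j)}$ via Lemma~\ref{lem:Lterm}, and extract the constant-order correction from the geometric mismatch between the tubes and the sector $C$ near the apex using Lemma~\ref{lem:notrectangularpart}.

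Concretely, starting from
\begin{align*}
h(A_C)-h(A)=\sum_{j=1}^2\bigl[h(A_{H^{(j)}})-h(A)\bigr]+Z',\qquad Z':=h(A_C)-h(A_{H^{(1)}})-h(A_{H^{(2)}})+h(A),
\end{align*}
I multiply by $\chi_{\mathsf{N}_L}$ and take the trace. For the regularisation piece, set $Z:=\chi_C Z'$; since $\mathsf{N}_L\subset C$ one has $\chi_{\mathsf{N}_L}Z'=\chi_{\mathsf{N}_L}Z$, and writing $\chi_{\mathsf{N}_L}=\chi_C-\chi_{C\setminus\mathsf{N}_L}$ with $C\setminus\mathsf{N}_L\subset\mathbb{R}^2\setminus B_{\epsilon L}(0)$ (from the construction of $\mathsf{N}$), Proposition~\ref{prop:regularisationcornerop} yields $\tr(\chi_{\mathsf{N}_L}Z')=\tr Z+\mathcal{O}(L^{-\infty})$, producing the sector contribution in the statement.

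For each half-space term I decompose $\chi_{\mathsf{N}_L}=\chi_{T_L^{(j)}}+\chi_{\mathsf{N}_L\setminus T_L^{(j)}}-\chi_{T_L^{(j)}\setminus\mathsf{N}_L}$. Lemma~\ref{lem:Lterm} handles the first piece and yields the linear-in-$L$ term. Since $T_L^{(j)}\subset T_L^{(1)}\cup T_L^{(2)}$, one has $T_L^{(j)}\setminus\mathsf{N}_L=T_L^{(j)}\setminus C$, and the remaining two pieces are then treated by a geometric case analysis on the interior angle $\gamma$ at $X$. If $\gamma\in(0,\pi/2]$, then for $\epsilon$ small enough, $T_L^{(j)}\setminus C$ is a right triangle in the tube which coincides with $\Gamma^{(j)}\cap T_L^{(j)}$, while $\Gamma^{(j)}\setminus T_L^{(j)}$ lies at distance $\gtrsim L$ from $\partial H^{(j)}$; a direct check also shows that $\mathsf{N}_L\setminus T_L^{(j)}$ is at distance $\gtrsim L$ from $\partial H^{(j)}$. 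If $\gamma\in[\pi/2,\pi)$, then $T_L^{(j)}\subset C$ and $T_L^{(j)}\setminus\mathsf{N}_L=\emptyset$, while the part of $\mathsf{N}_L\setminus T_L^{(j)}$ close to $\partial H^{(j)}$ coincides, up to a subset of distance $\gtrsim L$ from $\partial H^{(j)}$, with $\Gamma^{(j)}$. In both regimes Proposition~\ref{prop:trnormdecay} makes the pieces far from $\partial H^{(j)}$ contribute $\mathcal{O}(L^{-\infty})$, reducing the correction to $\pm\tr(\chi_{\Gamma^{(j)}}[h(A_{H^{(j)}})-h(A)])$. Lemma~\ref{lem:notrectangularpart} then rewrites this as $\pm|\cot\gamma|\tr(\chi_{S^{(j)}}M(x\cdot\nu^{(j)})[\cdots])$: with the ``$-$'' sign in the acute case (where $|\cot\gamma|=\cot\gamma$) and the ``$+$'' sign in the obtuse case (where $|\cot\gamma|=-\cot\gamma$), both regimes produce the uniform correction $-\cot\gamma\,\tr(\chi_{S^{(j)}}M(x\cdot\nu^{(j)})[\cdots])$. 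Summing over $j=1,2$ and combining with the regularisation term yields the proposition.

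The main difficulty will be the geometric bookkeeping in the case analysis: verifying, uniformly for $\epsilon$ sufficiently small relative to $\gamma$ and the edge lengths $|E^{(j)}|$, that the portions of $\mathsf{N}_L\setminus T_L^{(j)}$ originating from the opposite tube $T_L^{(3-j)}\cap C$ and from $B_{\epsilon L}\cap C$ either stay at distance $\gtrsim L$ from $\partial H^{(j)}$ (where Proposition~\ref{prop:trnormdecay} renders them $\mathcal{O}(L^{-\infty})$) or, in the obtuse case, faithfully reproduce $\Gamma^{(j)}$ up to an $\mathcal{O}(L^{-\infty})$ error. This is elementary affine geometry in the $(\tau^{(j)},\nu^{(j)})$ frame at the vertex, but the sign conventions and the smallness conditions on $\epsilon$ need to be tracked separately in the acute and obtuse subregimes.
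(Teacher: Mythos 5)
Your proposal is correct and follows essentially the same route as the paper: both split off the regularisation $Z=\chi_C[h(A_C)-h(A_{H^{(1)}})-h(A_{H^{(2)}})+h(A)]$ via Proposition~\ref{prop:regularisationcornerop}, localise the half-space contributions to the tubes $T_L^{(j)}$ via Lemma~\ref{lem:Lterm}, and account for the triangular mismatch between $T_L^{(j)}$ and $C$ (encoded in the paper by $\sgn(\gamma-\tfrac{\pi}{2})\chi_{\Gamma^{(j)}[\epsilon L]}$) before invoking Lemma~\ref{lem:notrectangularpart}. Your sign bookkeeping in the acute/obtuse cases reproduces exactly the factor $\sgn(\gamma-\tfrac{\pi}{2})|\cot\gamma|=-\cot\gamma$ used in the paper, so no genuinely different argument is involved.
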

\begin{proof}
We write
\begin{align}\label{eq:applyregularisationconvex}
\tr\big(\chi_{\mathsf{N}_L}\big[h(A_{C})-h(A)\big]\big)&=\tr\big(\chi_{\mathsf{N}_L}\big[h(A_{C})-h(A_{H^{(1)}})-h(A_{H^{(2)}})+h(A)\big]\big)\nonumber\\
&+\sum\limits_{j=1}^2\tr\big(\chi_{\mathsf{N}_L}\big[h(A_{H^{(j)}})-h(A)\big]\big).
\end{align}
Proposition \ref{prop:regularisationcornerop} implies that the operator
\begin{align*}
\chi_{C}\big[h(A_{C})-h(A_{H^{(1)}})-h(A_{H^{(2)}})+h(A)\big]
\end{align*}
is trace class with 
\begin{align*}
\tr\big(\chi_{C\setminus \mathsf{N}_L}\big[h(A_{C})-h(A_{H^{(1)}})-h(A_{H^{(2)}})+h(A)\big]\big)=\mathcal{O}(L^{-\infty}),
\end{align*}
since $\dist(0,C\setminus \mathsf{N}_L)\gtrsim L$.
Thus it remains to find the asymptotics for
\begin{align*}
\tr\big(\chi_{\mathsf{N}_L}\big[h(A_{H^{(j)}})-h(A)\big]\big), \ j=1,2.
\end{align*}
Recall the definition \eqref{eq:defGammajX} of the sectors $\Gamma^{(j)}$
and define its finite sections
\begin{align*}
\Gamma^{(j)}[r]:=\lbrace y \in \Gamma^{(j)}:y\cdot \nu^{(j)}\leq r\rbrace,\ j=1,2,\ r\geq 0.
\end{align*}
Applying the definition of $\mathsf{N}$, see \eqref{eq:defN}, and Corollary \ref{cor:Ldependenttrnormest} we get that
\begin{align*}
\tr\big(\chi_{\mathsf{N}_L}\big[h(A_{H^{(j)}})&-h(A)\big]\big)=\tr\big(\chi_{T_L^{(j)}}\big[h(A_{H^{(j)}})-h(A)\big]\big)\nonumber\\
&+\sgn(\gamma-\tfrac{\pi}{2}) \tr \big(\chi_{\Gamma^{(j)}[\epsilon L]}\big[h(A_{H^{(j)}})-h(A)\big]\big) + \mathcal{O}(L^{-\infty}).
\end{align*}
Furthermore, Lemma \ref{lem:Lterm} and another application of Corollary \ref{cor:Ldependenttrnormest} yield that
\begin{align*}
\tr\big(\chi_{\mathsf{N}_L}\big[h(A_{H^{(j)}})&-h(A)\big]\big)=\tfrac{L|E^{(j)}|}{2}\tr\big(\chi_{S^{(j)}}\big[h(A_{H^{(j)}})-h(A)\big]\big)\nonumber\\
&+\sgn(\gamma-\tfrac{\pi}{2})\tr\big(\chi_{\Gamma^{(j)}}\big[h(A_{H^{(j)}})-h(A)\big]\big)+\mathcal{O}(L^{-\infty}).
\end{align*}
Hence, the claim follows from Lemma \ref{lem:notrectangularpart} and \eqref{eq:applyregularisationconvex}. 
\end{proof}
\begin{proposition}[Concave corners]\label{prop:conccorners}
Let $X\in\Xi_>(P_L)$. Then we have that
\begin{align*}
\tr\big(\chi_{\mathsf{N}_L}&[h(A_{C})-h(A)]\big)=L\,\sum\limits_{j=1}^2\tfrac{|E^{(j)}|}{2}\tr\big(\chi_{S^{(j)}}\big[h(A_{H^{(j)}})-h(A)\big]\big)\nonumber\\
&+\tr\big(\chi_{H^{(1)}\cap H^{(2)}}\big[h(A_{C})-h(A)\big]\big)\nonumber\\[2ex]
&+\tr\big(\chi_{C\setminus H^{(1)}}\big[h(A_{C})-h(A_{H^{(2)}}\big]\big)\nonumber\\[2ex]
&+ \tr\big(\chi_{C\setminus H^{(2)}}\big[h(A_{C})-h(A_{H^{(1)}}\big]\big)\nonumber\\
&-\cot(\gamma)\sum\limits_{j=1}^2\tr(\chi_{S^{(j)}} M(x\cdot \nu_{H^{(j)}})\big[h(A_{H^{(j)}})-h(A)\big]\big)+\mathcal{O}(L^{-\infty}).
\end{align*}
as $L\to\infty$.
\end{proposition}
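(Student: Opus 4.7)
The plan is to mirror the convex-corner argument in Proposition~\ref{prop:convcorners}, but now using the three-piece regularisation $(Z_1,Z_2,Z_3)$ from Proposition~\ref{prop:regularisationcornerop}. I would begin with the (almost-disjoint) partition
\begin{align*}
C = (H^{(1)}\cap H^{(2)}) \cup (C\setminus H^{(1)}) \cup (C\setminus H^{(2)}),
\end{align*}
and split $\chi_{\mathsf{N}_L}$ accordingly. On the inner wedge piece $\mathsf{N}_L\cap H^{(1)}\cap H^{(2)}$, the integrand is exactly $\chi_{\mathsf{N}_L}Z_1$. Since $\mathsf{N}_L$ contains $B_{\epsilon L}(0)\cap C$, the set $(H^{(1)}\cap H^{(2)})\setminus \mathsf{N}_L$ lies at distance $\gtrsim L$ from the origin, and Proposition~\ref{prop:regularisationcornerop} yields a contribution of $\tr Z_1 + \mathcal{O}(L^{-\infty})$.

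On each wing piece $\mathsf{N}_L\cap(C\setminus H^{(3-j)})$, $j\in\{1,2\}$, I would decompose
\begin{align*}
h(A_C)-h(A) = \bigl[h(A_C)-h(A_{H^{(j)}})\bigr] + \bigl[h(A_{H^{(j)}})-h(A)\bigr].
\end{align*}
Multiplied by $\chi_{C\setminus H^{(3-j)}}$, the first bracket coincides with $Z_3$ for $j=1$ and with $Z_2$ for $j=2$, in the notation of \eqref{eq:b0Xconcave}. The same distance argument therefore produces $\tr Z_3$ and $\tr Z_2$, respectively, modulo super-polynomial errors. What remains is to analyse the half-space traces
\begin{align*}
\tr\bigl(\chi_{\mathsf{N}_L\cap(C\setminus H^{(3-j)})}\bigl[h(A_{H^{(j)}})-h(A)\bigr]\bigr)
\end{align*}
and show that they produce the $L$-term $L\tfrac{|E^{(j)}|}{2}\tr(\chi_{S^{(j)}}[h(A_{H^{(j)}})-h(A)])$ together with the $\cot\gamma$ correction.

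For this half-space step, a short angle computation (using $\tau_X^{(j)}\cdot\nu_{H^{(3-j)}}=\sin\gamma$ and $\nu_{H^{(j)}}\cdot\nu_{H^{(3-j)}}=-\cos\gamma$) shows that in the concave regime the tube $T^{(j)}$ lies in the wing $C\setminus H^{(3-j)}$ up to a triangular sliver near the origin, the wing having opening $\gamma-\pi$. Exactly as in the convex proof, Corollary~\ref{cor:Ldependenttrnormest} then permits the replacement
\begin{align*}
\tr\bigl(\chi_{\mathsf{N}_L\cap(C\setminus H^{(3-j)})}\bigl[h(A_{H^{(j)}})-h(A)\bigr]\bigr)
&= \tr\bigl(\chi_{T_L^{(j)}}\bigl[h(A_{H^{(j)}})-h(A)\bigr]\bigr) \\
&\quad + \sgn\bigl(\gamma-\tfrac{3\pi}{2}\bigr)\,\tr\bigl(\chi_{\Gamma^{(j)}[\epsilon L]}\bigl[h(A_{H^{(j)}})-h(A)\bigr]\bigr) + \mathcal{O}(L^{-\infty}),
\end{align*}
where the reference angle is $3\pi/2$ because a rectangular tube fits exactly inside a wedge of opening $\gamma-\pi$ when $\gamma-\pi=\tfrac{\pi}{2}$. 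Lemma~\ref{lem:Lterm} converts the tube trace into $L\tfrac{|E^{(j)}|}{2}\tr(\chi_{S^{(j)}}[h(A_{H^{(j)}})-h(A)])$; another application of Corollary~\ref{cor:Ldependenttrnormest} extends $\Gamma^{(j)}[\epsilon L]$ to $\Gamma^{(j)}$ with super-polynomial error; and Lemma~\ref{lem:notrectangularpart} rewrites the resulting correction as $|\cot\gamma|\,\tr(\chi_{S^{(j)}}M(x\cdot\nu_{H^{(j)}})[h(A_{H^{(j)}})-h(A)])$. Since $\sgn(\gamma-\tfrac{3\pi}{2})\,|\cot\gamma| = -\cot\gamma$ on $(\pi,2\pi)\setminus\{\tfrac{3\pi}{2}\}$, assembling the five contributions yields the asserted asymptotic formula.

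The principal obstacle is the geometric bookkeeping near the concave corner: one must verify that (i) the bulk of the tube $T^{(j)}$ sits in the wing $C\setminus H^{(3-j)}$ rather than in the inner wedge $H^{(1)}\cap H^{(2)}$, and (ii) the correct reference angle for the tube-versus-wing comparison is $3\pi/2$, so that the sign $\sgn(\gamma-\tfrac{3\pi}{2})$ combines with $|\cot\gamma|$ to give the universal $-\cot\gamma$ prefactor. These orientation questions reverse some of the intuition from the convex case and must be handled carefully; once they are settled, the rest of the argument is a direct translation of the proof of Proposition~\ref{prop:convcorners}, with the ball-slice portions of $\mathsf{N}_L$ absorbed into either the $Z_j$ pieces (via $\|\chi_{\mathbb{R}^2\setminus B_L(0)}Z_j\|_1=\mathcal{O}(L^{-\infty})$) or the triangular correction $\Gamma^{(j)}[\epsilon L]$.
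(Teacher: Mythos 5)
Your proposal is correct and follows essentially the same route as the paper: the split of $\chi_{\mathsf{N}_L}[h(A_C)-h(A)]$ into the inner-wedge piece and two wings, with $h(A_C)-h(A)$ decomposed on each wing as $[h(A_C)-h(A_{H^{(j)}})]+[h(A_{H^{(j)}})-h(A)]$, is exactly the paper's $\eta_1(L)+\eta_2(L)$ decomposition, and the treatment of the half-space remainder via Lemma~\ref{lem:Lterm}, the $\sgn(\gamma-\tfrac{3\pi}{2})$ sign, and Lemma~\ref{lem:notrectangularpart} is the same. Your additional geometric comments on the reference angle $3\pi/2$ and the sign identity $\sgn(\gamma-\tfrac{3\pi}{2})|\cot\gamma|=-\cot\gamma$ on $(\pi,2\pi)$ are accurate elaborations of what the paper states more tersely.
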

\begin{proof}
The proof is analogous to the convex case. We write
\begin{align*}
\tr\big(\chi_{\mathsf{N}_L}[h(A_{C})-h(A)]\big)=&\eta_1(L)+\eta_2(L),
\end{align*}
with
\begin{align*}
\eta_1(L)&:=\tr\big(\chi_{\mathsf{N}_L\cap H^{(1)}\cap H^{(2)}}\big[h(A_C)-h(A)\big]\big)+\tr\big(\chi_{\mathsf{N}_L\cap C\setminus H^{(1)}}\big[h(A_C)-h(A_{H^{(2)}})\big]\big)\nonumber\\[1ex]
&\ \ \ + \tr\big(\chi_{\mathsf{N}_L\cap C\setminus H^{(2)}}\big[h(A_C)-h(A_{H^{(1)}})\big]\big),
\end{align*}
and 
\begin{align*}
\eta_2(L)&:=\tr\big(\chi_{\mathsf{N}_L\cap C\setminus H^{(1)}}\big[h(A_{H^{(2)}})-h(A)\big]+\tr\big(\chi_{\mathsf{N}_L\cap C\setminus H^{(2)}}\big[h(A_{H^{(1)}})-h(A)\big].
\end{align*}
Proposition \eqref{prop:regularisationcornerop} implies that 
\begin{align*}
\eta_1(L)&=\tr\big(\chi_{H^{(1)}\cap H^{(2)}}\big[h(A_C)-h(A)\big]\big)+\tr\big(\chi_{C\setminus H^{(1)}}\big[h(A_C)-h(A_{H^{(2)}})\big]\big)\nonumber\\[1ex]
&\ \ \ + \tr\big(\chi_{C\setminus H^{(2)}}\big[h(A_C)-h(A_{H^{(1)}})\big]\big) + \mathcal{O}(L^{-\infty}).
\end{align*}
Moreover, we notice that the sectors $C\setminus H^{(j)}$, $j=1,2$, have an interior angle of $\gamma-\pi\in (0,\pi)$. This and the fact that $\cot(\gamma-\pi)=\cot(\gamma)$ explains why the contribution of $\eta_2(L)$ to the asymptotics is the same as in the convex case. Alternatively, one easily gets that, for instance,
\begin{align*}
\tr\big(\chi_{\mathsf{N}_L\cap C\setminus H^{(1)}}\big[h(A_{H^{(2)}})&-h(A)\big]\big)=\tr\big(\chi_{T_L^{(2)}}\big[h(A_{H^{(2)}})-h(A)\big]\big)\nonumber\\
&+\sgn(\gamma-\tfrac{3\pi}{2})\tr\big(\chi_{\mathsf{N}_L\cap \Gamma^{(2)}}\big[h(A_{H^{(2)}})-h(A)\big]\big).
\end{align*}
Thus, as in the convex case the claim follows from Corollaries \ref{cor:traceclassifpolybdd} and \ref{cor:Ldependenttrnormest}, and Lemmas \ref{lem:Lterm} and \ref{lem:notrectangularpart}.
\end{proof}
The proof of Theorem \ref{thm:abstractasympt} is now complete:
\begin{proof}[Proof of Theorem \ref{thm:abstractasympt}]
Subsection \ref{subsec:proofofabstrthm1} implies that for $h=h_1$, 
\begin{align*}
\tr\big(\chi_{P_L}\big[h(A_{P_L})-h(A)\big]\chi_{P_L}\big)&=\sum\limits_{X\in\Xi(P)}\tr\big(\chi_{\mathsf{N}_L(X)}\big[h(A_{C(X)})-h(A)\big]\big) + \mathcal{O}(L^{-\infty}).
\end{align*}
Hence, it follows from Propositions \ref{prop:convcorners} and \ref{prop:conccorners} that
\begin{align*}
\tr\big(\chi_{P_L}\big[h(A_{P_L})-h(A)\big]\chi_{P_L}\big)&=Lc_1+c_0+\mathcal{O}(L^{-\infty}).
\end{align*}
In view of \eqref{eq:leadingorderasympt}, this finishes the proof of the theorem.
\end{proof}

\section{Proof of Theorem~\ref{thm:coefficients}}\label{sec:evalcoeff}
It suffices to prove the theorem for test functions $h$ of the form $h(z)=\sum\limits_{k=2}^\infty a_kz^k$ since both sides of \eqref{eq:a1nuEasWienerHopf} and \eqref{eq:a0nuEasWienerHopf} vanish for linear functions $h$. Moreover, we may assume after a suitable rotation that $H_E=H=\mathbb{R}\times [0,\infty)$, i.e.~$S_E=S=[0,1]\times [0,\infty)$. Thus, we have that 
\begin{align*}
\sigma_{E,t}(\xi)=\sigma(t,\xi)=:\sigma_t(\xi),\ (t,\xi)\in\mathbb{R}^2.
\end{align*}
Define, for $\alpha\in \lbrace 0,1\rbrace$ and fixed $t\in\mathbb{R}$, the operator
\begin{align*}
B_\alpha(t):=M(x^\alpha)\big[h\lbrace W(\sigma_t)\rbrace -W(h\circ \sigma_t)\big],
\end{align*}
which acts on $L^2(\mathbb{R})$. Proposition \ref{prop:trnormdecay} implies that, for $\alpha\in\lbrace 0,1\rbrace$ and $t\in\mathbb{R}$,
\begin{align*}
\big\|B_\alpha(t)\big\|_1&\leq \sum\limits_{n=1}^\infty \big\|M(x^\alpha)\chi_{[n-1,n]}\big[h\lbrace A_{[0,\infty)}(\sigma_t)\rbrace-h\lbrace A(\sigma_t)\rbrace\big]\chi_{[0,\infty)}\big\|_1 \nonumber\\
&\leq \sum\limits_{n=1}^\infty \big\|M(x^\alpha)\chi_{[n-1,n]}\big\|\big\|\chi_{[n-1,n]}\big[h\lbrace A_{[0,\infty)}(\sigma_t)\rbrace-h\lbrace A(\sigma_t)\rbrace\big]\big\|_1 \nonumber\\
&\leq \sum\limits_{n=1}^\infty n^\alpha \ang{n-1}^{-3} \sum\limits_{k=2}^\infty k|a_k| \big[C_3 \|\sigma_t\|_{12}\big]^k\nonumber\\
&\lesssim \sum\limits_{k=2}^\infty k|a_k|\big[C_3 \|\sigma_t\|_{12}\big]^k<\infty.
\end{align*}
Hence, in view of Lemma \ref{lem:sigma} we have that $(t\mapsto \|B_\alpha(t)\|_1)\in L^1(\mathbb{R})\cap L^\infty(\mathbb{R})$. In particular, the right-hand sides of \eqref{eq:a1nuEasWienerHopf} and \eqref{eq:a0nuEasWienerHopf} are well-defined under our assumptions on $h$ and $\sigma$.

Introduce the unitary (identification) map
\begin{align*}
J:\mathsf{L}^2(\mathbb{R}^2)\to\mathsf{L}^2\big(\mathbb{R},\mathsf{L}^2(\mathbb{R})\big),\ (Jf)(t):=f(t,\,\cdot\,).
\end{align*}
Moreover, define the partial Fourier transforms $\mathcal{F}_1$, $\mathcal{F}_2$ on $\mathsf{L}^2(\mathbb{R}^2)$ that only act on the first and second variable, respectively.
To obtain the identities \eqref{eq:a1nuEasWienerHopf} and \eqref{eq:a0nuEasWienerHopf}, we first prove that
\begin{align}\label{eq:unitaryeqivdirectintegral}
M(x_2^\alpha) \big[h(A_H)-\chi_H h(A)\chi_H\big]=\mathcal{F}_1^\ast J^\ast B_\alpha J \mathcal{F}_1,
\end{align}
where $B_\alpha:=\int\limits_{\mathbb{R}}^\oplus dt\, B_\alpha(t)$ acts on $\mathsf{L}^2\big(\mathbb{R},\mathsf{L}^2(\mathbb{R})\big)$. For an introduction to direct integral operators see for example \cite{ReedSimon4}. To verify \eqref{eq:unitaryeqivdirectintegral}, notice that
\begin{align*}
\mathcal{F}_1\chi_H =\chi_H\mathcal{F}_1,
\end{align*} 
hence
\begin{align*}
A_H=\chi_H\mathcal{F}_1^\ast \mathcal{F}_2^\ast \sigma \mathcal{F}_2\mathcal{F}_1\chi_H=\mathcal{F}_1^\ast \chi_H\mathcal{F}_2^\ast \sigma \mathcal{F}_2\chi_H\mathcal{F}_1.
\end{align*}
Moreover, the definition of $J$ yields that
\begin{align*}
\chi_H\mathcal{F}_2^\ast\sigma \mathcal{F}_2\chi_H =J^\ast \int\limits_\mathbb{R}^\oplus dt\, W(\sigma_t)\, J,
\end{align*}
implying that
\begin{align}\label{eq:hAHfiberintegral}
h(A_H)&=
\mathcal{F}_1^\ast J^\ast h\big(\int\limits_\mathbb{R}^\oplus dt\, W(\sigma_t)\big)J\mathcal{F}_1 =\mathcal{F}_1^\ast J^\ast \int\limits_\mathbb{R}^\oplus dt\, h\lbrace W(\sigma_t)\rbrace J\mathcal{F}_1.
\end{align}
Similarly, one gets that
\begin{align}\label{eq:chiHhAchiHfiber}
\chi_H h(A)\chi_H = A_H(h\circ \sigma)=\mathcal{F}_1^\ast J^\ast \int\limits_\mathbb{R}^\oplus dt\, W(h\circ \sigma_t)\,J\mathcal{F}_1.
\end{align}
Thus, combining \eqref{eq:hAHfiberintegral} and \eqref{eq:chiHhAchiHfiber} gives
\begin{align*}
M(x_2^\alpha)\chi_H\big[h(A_H)-h(A)\big]\chi_H&=M(x_2^\alpha) \mathcal{F}_1^\ast J^\ast \int\limits_\mathbb{R}^\oplus dt\, B_0(t) J\mathcal{F}_1 =\mathcal{F}_1^\ast J^\ast \int\limits_\mathbb{R}^\oplus dt\, B_\alpha(t)\, J\mathcal{F}_1,
\end{align*}
which proves \eqref{eq:unitaryeqivdirectintegral}.

As a consequence of \eqref{eq:unitaryeqivdirectintegral}, the coefficients $a_1(\nu_E)$ and $a_0(\nu_E)$ are given by the traces of the operators $\chi_S \tilde{B}_\alpha\chi_S$, $\alpha=0,1$, where
\begin{align*}
\tilde{B}_\alpha:=\mathcal{F}_1^\ast J^\ast B_\alpha J\mathcal{F}_1.
\end{align*}
In order to calculate these traces, we evaluate the quadratic form of $\tilde{B}_\alpha$ on product states. Namely, for $\phi,\psi\in\mathsf{L}^2(\mathbb{R})$, we have that
\begin{align}\label{eq:quadraticformbalpha}
\big\langle \phi \otimes \psi,\tilde{B}_\alpha(\phi\otimes \psi)\big\rangle_{\mathsf{L}^2(\mathbb{R}^2)}&=\big\langle J((\mathcal{F}\phi)\otimes \psi), B_\alpha J ((\mathcal{F}\phi)\otimes \psi)\big\rangle_{\mathsf{L}^2(\mathbb{R},\mathsf{L}^2(\mathbb{R}))}\nonumber\\
&=\int\limits_\mathbb{R}dt\, \big\langle (\mathcal{F}\phi)(t)\psi, (\mathcal{F}\phi)(t)B_\alpha(t)\psi\big\rangle_{\mathsf{L}^2(\mathbb{R})} \nonumber\\
&=\int\limits_{\mathbb{R}}dt\, |(\mathcal{F}\phi)(t)|^2\langle \psi, B_\alpha(t)\psi \rangle_{\mathsf{L}^2(\mathbb{R})}.
\end{align}
Choose now an orthonormal basis $\lbrace \psi_n\rbrace_{n\in\mathbb{N}}$ of $\mathsf{L}^2(\mathbb{R})$, such that $\lbrace \psi_n\otimes\psi_m \rbrace_{n,m\in\mathbb{N}}$ is an orthonormal basis of $\mathsf{L}^2(\mathbb{R}^2)$. Then \eqref{eq:quadraticformbalpha} implies that
\begin{align*}
\tr\big(\chi_S \tilde{B}_\alpha \chi_S\big)&=\sum\limits_{n,m\in\mathbb{N}} \langle \psi_n\otimes \psi_m,\chi_S\tilde{B}_\alpha \chi_S \psi_n\otimes\psi_m\rangle_{\mathsf{L}^2(\mathbb{R}^2)}\nonumber\\
&=\sum\limits_{n,m\in\mathbb{N}}\int\limits_\mathbb{R}dt\, |\mathcal{F}(\chi_{[0,1]}\psi_n)(t)|^2\langle \psi_m, B_\alpha(t)\psi_m\rangle_{\mathsf{L}^2(\mathbb{R})}.
\end{align*}
As we have the estimate
\begin{align*}
\sum\limits_{m\in\mathbb{N}}\big|\langle \psi_m, B(t)\psi_m\rangle_{\mathsf{L}^2(\mathbb{R})}\big|\leq \|B_\alpha(t)\|_1\in \mathsf{L}^\infty(\mathbb{R}),
\end{align*}
we may apply Fubini's theorem to get that
\begin{align*}
\tr(\chi_S\tilde{B}_\alpha\chi_S)&=\sum\limits_{n\in\mathbb{N}}\int\limits_\mathbb{R}dt\, |\mathcal{F}(\chi_{[0,1]}\psi_n)(t)|^2 \tr B_\alpha(t) \nonumber\\
&=\sum\limits_{n\in\mathbb{N}}\langle \psi_n,\chi_{[0,1]}\mathcal{F}^\ast \tr B_\alpha(\,\cdot\,)\mathcal{F}\chi_{[0,1]}\psi_n\rangle_{\mathsf{L}^2(\mathbb{R})}.
\end{align*}
Hence, employing the fact that $\tr B_\alpha(\,\cdot\,)\in\mathsf{L}^1(\mathbb{R})$, we arrive at
\begin{align*}
\tr(\chi_S\tilde{B}_\alpha\chi_S)&=\tr\big(\chi_{[0,1]}\mathcal{F}^\ast \tr B_\alpha(\,\cdot\,)\mathcal{F}\chi_{[0,1]}\big)=(\tr B_\alpha\widecheck{)\,}\!(0)=\frac{1}{2\pi}\int\limits_\mathbb{R}dt\,\tr B_\alpha(t).
\end{align*}
This finishes the proof of Theorem \ref{thm:coefficients}.

\section{Radially symmetric symbols -- Proof of Theorem \ref{thm:rotsymmasymptsquare}}
\label{sec:rotsymmsymbols}

As in the statement of Theorem~\ref{thm:rotsymmasymptsquare} assume that the symbol $\sigma$ is radially symmetric and the test function $h$ is a quadratic polynomial, i.e.~$h(z)=z^2+bz$ for some $b\in\mathbb{C}$. The coefficient $c_2=c_2(P,h,\sigma)$ is easily computed from Theorem \ref{thm:abstractasympt}. Recall also that the linear part of $h$ does not contribute to the coefficients $c_1$ and $c_0$, so we may assume in the following that $h(z)=z^2$. To compute $c_1$ and $a_0(\nu_E)$, $E\in\mathcal{E}(P)$, we apply Theorem~\ref{thm:coefficients}. This is done in the next lemma.
\begin{lemma}\label{lem:halfspacesquared}
Let $h(z)=z^2$ and assume that $\sigma\in\mathsf{W}^{\infty,1}(\mathbb{R}^2)$ is radially symmetric. Then the coefficients $c_1$, $a_0(\nu_E)$ in Theorem \ref{thm:abstractasympt} satisfy the equations
\begin{align*}
c_1&=-2\left\vert{\partial P}\right\vert\int\limits_0^\infty dr\,r^2\check{\sigma}(r)^2, \\
\sum\limits_{E\in\mathcal{E}(P)}F(E)\, a_0(\nu_E)&=\sum\limits_{X\in\Xi(P)}\frac{\pi}{2}\cot(\gamma_X)\int\limits_0^\infty dr\, r^3 \check{\sigma}(r)^2.
\end{align*}
\end{lemma}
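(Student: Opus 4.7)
The strategy is to apply Theorem \ref{thm:coefficients} to write $a_1(\nu_E)$ and $a_0(\nu_E)$ as integrals of traces of $1$-dimensional Wiener--Hopf remainders, exploit the explicit factorisation for $h(z)=z^2$, and then reduce the resulting double integrals to radial integrals using the radial symmetry of $\sigma$. The reduced symbols $\sigma_{E,t}(\xi)=\sigma(\sqrt{t^2+\xi^2})$ are even in $\xi$, and $\check\sigma$ is radial, so all directional information disappears; in particular I expect $a_1(\nu_E)$ and $a_0(\nu_E)$ to be independent of $E$.

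The first step is the classical Wiener--Hopf identity
\begin{align*}
h\{W(\sigma_{E,t})\}-W(h\circ\sigma_{E,t})=W(\sigma_{E,t})^2-W(\sigma_{E,t}^2)=-P_+A(\sigma_{E,t})P_-A(\sigma_{E,t})P_+,
\end{align*}
where $P_\pm$ are the projections onto $L^2(\mathbb R_\pm)$. Writing out the kernel $A(\sigma_{E,t})(x,y)=\check\sigma_{E,t}(x-y)$, using evenness $\check\sigma_{E,t}(-u)=\check\sigma_{E,t}(u)$, and carrying out the elementary substitution $w=x-y$ yields
\begin{align*}
\tr\bigl[W(\sigma_{E,t})^2-W(\sigma_{E,t}^2)\bigr]&=-\int_0^\infty w\,\check\sigma_{E,t}(w)^2\,dw,\\
\tr\bigl[M(x)\bigl(W(\sigma_{E,t})^2-W(\sigma_{E,t}^2)\bigr)\bigr]&=-\tfrac12\int_0^\infty w^2\,\check\sigma_{E,t}(w)^2\,dw.
\end{align*}

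The main step is the integration over $t$. Here I would use that $(t,w)\mapsto\check\sigma_{E,t}(w)$ is the partial inverse Fourier transform of $\sigma$ in the second variable, so Parseval applied in the $t$-variable, together with the evenness $\check\sigma(-s,u)=\check\sigma(s,u)$ coming from radial symmetry, gives the identity
\begin{align*}
\int_{\mathbb R}\check\sigma_{E,t}(u)^2\,dt=2\pi\int_{\mathbb R}\check\sigma(\sqrt{s^2+u^2})^2\,ds=4\pi\int_{|u|}^\infty\check\sigma(r)^2\,\frac{r}{\sqrt{r^2-u^2}}\,dr,
\end{align*}
where the last equality is the substitution $r=\sqrt{s^2+u^2}$. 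Fubini then converts the double integrals above to
\begin{align*}
\int_{\mathbb R}dt\int_0^\infty u\,\check\sigma_{E,t}(u)^2\,du&=4\pi\int_0^\infty r\,\check\sigma(r)^2\Bigl(\int_0^r\frac{u}{\sqrt{r^2-u^2}}du\Bigr)dr=4\pi\int_0^\infty r^2\check\sigma(r)^2\,dr,\\
\int_{\mathbb R}dt\int_0^\infty u^2\,\check\sigma_{E,t}(u)^2\,du&=4\pi\int_0^\infty r\,\check\sigma(r)^2\Bigl(\int_0^r\frac{u^2}{\sqrt{r^2-u^2}}du\Bigr)dr=\pi^2\int_0^\infty r^3\check\sigma(r)^2\,dr,
\end{align*}
using the trigonometric integrals $\int_0^r u/\sqrt{r^2-u^2}\,du=r$ and $\int_0^r u^2/\sqrt{r^2-u^2}\,du=\pi r^2/4$. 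Substituting back into the formulas from Theorem~\ref{thm:coefficients} produces
\begin{align*}
a_1(\nu_E)=-2\int_0^\infty r^2\check\sigma(r)^2\,dr,\qquad a_0(\nu_E)=-\tfrac{\pi}{4}\int_0^\infty r^3\check\sigma(r)^2\,dr,
\end{align*}
both independent of $E$.

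The final combinatorial step is straightforward: $c_1=\sum_E|E|\,a_1(\nu_E)=-2|\partial P|\int_0^\infty r^2\check\sigma(r)^2\,dr$ since $\sum_E|E|=|\partial P|$, while for the angle sum each vertex $X\in\Xi(P)$ contributes $-\cot(\gamma_X)$ to $F$ on each of its two adjacent edges, so $\sum_{E}F(E)=-2\sum_{X\in\Xi(P)}\cot(\gamma_X)$, giving
\begin{align*}
\sum_{E\in\mathcal E(P)}F(E)\,a_0(\nu_E)=\tfrac\pi2\sum_{X\in\Xi(P)}\cot(\gamma_X)\int_0^\infty r^3\check\sigma(r)^2\,dr.
\end{align*}
The main obstacle to keep rigorous is the $t$-integration step: verifying integrability (so that Fubini applies) and handling Parseval for complex-valued $\sigma$ without a modulus sign requires care, but the radiality (which forces both $\sigma$ and $\check\sigma$ to be even in each Cartesian coordinate) is precisely what makes the identity go through with $\check\sigma(r)^2$ in place of $|\check\sigma(r)|^2$.
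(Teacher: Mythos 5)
Your proposal is correct and follows essentially the same route as the paper: both start from the formulas of Theorem~\ref{thm:coefficients}, use the Wiener--Hopf identity $W(\sigma_t)^2-W(\sigma_t^2)=-P_+A(\sigma_t)P_-A(\sigma_t)P_+$, apply Parseval in the $t$-variable, and exploit radial symmetry. The only organizational difference is in the final integration: the paper keeps the result as a double integral over $(y_1,y_2)$ of $|y_2|^{\alpha+1}\check\sigma(y_1,y_2)^2$ and passes to two-dimensional polar coordinates, whereas you do the $t$-integral first via the Abel-type substitution $r=\sqrt{s^2+u^2}$, then Fubini with the one-dimensional trigonometric integrals $\int_0^r u/\sqrt{r^2-u^2}\,du=r$ and $\int_0^r u^2/\sqrt{r^2-u^2}\,du=\pi r^2/4$. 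Both routes produce $a_1(\nu_E)=-2\int_0^\infty r^2\check\sigma(r)^2\,dr$ and $a_0(\nu_E)=-\tfrac\pi4\int_0^\infty r^3\check\sigma(r)^2\,dr$, and the closing combinatorics $\sum_E|E|=|\partial P|$ and $\sum_E F(E)=-2\sum_X\cot\gamma_X$ are identical. One small clarification on your final caveat: the Parseval-type identity you invoke is the unconjugated version $\int fg=\int\hat f(s)\hat g(-s)\,ds$, which is valid without any reality assumption; the role of radial symmetry is to identify the resulting product $\check\sigma(-s,u)\check\sigma(s,-u)$ with $\check\sigma(\sqrt{s^2+u^2})^2$, not to remove a modulus sign.
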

\begin{proof}
We first notice that the radial symmetry of the symbol implies that $\sigma_{E,t}(\xi)=\sigma(t,\xi)=\sigma_t(\xi)$ for all $E\in\mathcal{E}(P)$, and $t,\xi\in\mathbb{R}$. Furthermore, we make use of the formulas \eqref{eq:a1nuEasWienerHopf} and \eqref{eq:a0nuEasWienerHopf} in Theorem~\ref{thm:coefficients}. Similarly as in \cite{Widom1982}, one calculates that, for $\alpha\in\lbrace 0, 1\rbrace$, $t\in\mathbb{R}$,
\begin{align*}
-\tr\big(M(x^\alpha)\big[W(\sigma_t)^2-W(\sigma_t^2)\big]\big)&=\int\limits_0^\infty dx\,x^\alpha\!\int\limits_{-\infty}^0 dy\,\check{\sigma}_t(x-y)\check{\sigma}_t(y-x)\nonumber\\
&=\int\limits_0^\infty dx\, x^\alpha\! \int\limits_x^\infty dy\,\check{\sigma}_t(y)\check{\sigma}_t(-y)\nonumber\\
&=\int\limits_0^\infty dy\,\check{\sigma}_t(y) \check{\sigma}_t(-y)\int\limits_0^y dx\,x^\alpha\nonumber\\
&=\frac{1}{2}\int\limits_{-\infty}^\infty dy\, \frac{|y|^{\alpha+1}}{\alpha+1}\check{\sigma}_t(y)\check{\sigma}_t(-y).
\end{align*} 
Parseval's identity in the $t$-variable and the radial symmetry of $\check{\sigma}$ imply that
\begin{align*}
-\frac{1}{2\pi}\int\limits_{\mathbb{R}}dt\,\tr\big(M(x^\alpha)\big[W(\sigma_t)^2-W(\sigma_t^2)\big]\big)&=\frac{1}{4\pi}\int\limits_{\mathbb{R}}dt\,\int\limits_{\mathbb{R}}dy\,\frac{|y|^{\alpha+1}}{\alpha+1}\check{\sigma}_t(y)\check{\sigma}_t(-y)\nonumber\\
&=\frac{1}{2}\int\limits_{\mathbb{R}}dy_1\int\limits_{\mathbb{R}}dy_2 \frac{|y_2|^{\alpha+1}}{\alpha+1}\check{\sigma}(-y_1,y_2)\check{\sigma}(y_1,-y_2)\nonumber\\
&=\frac{1}{2}\int\limits_0^\infty dr\,r\int\limits_0^{2\pi} d\theta \frac{|r\sin(\theta)|^{\alpha+1}}{\alpha+1}\check{\sigma}(r)^2\nonumber\\
&=\begin{cases}
2\int\limits_0^\infty dr\, r^2\check{\sigma}(r)^2,& \ \alpha=0,\\
\frac{\pi}{4}\int\limits_0^\infty dr\, r^3\check{\sigma}(r)^2,&\ \alpha=1.
\end{cases}
\end{align*}
Hence, the claim follows from Theorem~\ref{thm:coefficients} and the definition of $F(E)$, see \eqref{eq:defF}.
\end{proof}
It remains to compute the coefficients $b_0(X)$, $X\in\Xi(P)$, from formulas \eqref{eq:b0Xconvex} and \eqref{eq:b0Xconcave}. This calculation is performed in the next lemma.
\begin{lemma}\label{lem:cornersquare}
Let $h(z)=z^2$ and assume that the symbol $\sigma\in \mathsf{W}^{\infty,1}(\mathbb{R}^2)$ is radially symmetric.
\\
Then for every $X\in\Xi(P)$ the formula
\begin{align}\label{eq:b0Xexplicitconvcorners}
b_0(X)=\frac{1-\gamma_X\cot(\gamma_X)}{2}\int\limits_0^\infty dr\, r^3\check{\sigma}(r)^2
\end{align}
holds.
\end{lemma}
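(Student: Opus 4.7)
My plan is to exploit the quadratic form of $h$ to reduce the traces in \eqref{eq:b0Xconvex} and \eqref{eq:b0Xconcave} to elementary double integrals, which can then be evaluated using the radial symmetry of $\check{\sigma}$ via a law-of-sines substitution. Since $h(z) = z^2$ satisfies $h'(0) = 0$, we have $h_1 = h$, and the diagonal kernel of $A_\Omega^2$ is
\[
A_\Omega^2(x,x) = \int_\Omega \check\sigma(x-w)\,\check\sigma(w-x)\, dw = \int_\Omega \check\sigma(x-w)^2\, dw,
\]
using that radial symmetry of $\sigma$ implies $\check\sigma(-y) = \check\sigma(y)$. Denoting the four ``quadrants''
\[
C_0 := H^{(1)}(X) \cap H^{(2)}(X),\ C_j := H^{(j)}(X)\setminus H^{(3-j)}(X)\ (j=1,2),\ C_\infty := \mathbb{R}^2\setminus (H^{(1)}(X)\cup H^{(2)}(X)),
\]
inclusion--exclusion gives $\chi_{C_0} - \chi_{H^{(1)}(X)} - \chi_{H^{(2)}(X)} + 1 = \chi_{C_\infty}$, so in the convex case $C(X) = C_0$ and \eqref{eq:b0Xconvex} becomes $b_0(X) = \int_{C_0}\!\int_{C_\infty} \check\sigma(x-w)^2\, dw\, dx$. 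In the concave case $C(X) = C_0 \cup C_1 \cup C_2$; expanding \eqref{eq:b0Xconcave} analogously (using $C(X) \setminus H^{(j)}(X) = C_{3-j}$ and $\mathbb{R}^2 \setminus C(X) = C_\infty$) gives $b_0(X) = -\int_{C_0}\!\int_{C_\infty} \check\sigma(x-w)^2\, dw\, dx + 2\int_{C_1}\!\int_{C_2} \check\sigma(x-w)^2\, dw\, dx$. In each case the integration domain is a pair of opposite sectors $D, -D$ whose common angle lies in $(0,\pi)$: either $\gamma_X$ (convex), or one of the angles $2\pi - \gamma_X$ and $\gamma_X - \pi$ (concave).

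It therefore suffices to evaluate, for a sector $D$ of angle $\alpha \in (0,\pi)$ with apex at the origin,
\[
J(\alpha) := \int_D \int_{-D} \check\sigma(x-w)^2\, dw\, dx.
\]
Parameterising $D$ by $x = re^{i\phi}$, $\phi \in [0,\alpha]$, $r\geq 0$ and $-D$ by $w = -se^{i\psi}$, $\psi \in [0,\alpha]$, $s\geq 0$, one has $|x-w|^2 = r^2 + s^2 + 2rs\cos(\phi - \psi)$; setting $\theta := \phi - \psi$ and integrating out $\phi + \psi$ produces the weight $\alpha - |\theta|$ on $[-\alpha,\alpha]$. For fixed $\theta \in (0,\alpha)$, I would apply the law-of-sines substitution: view $r,s$ as two sides of a triangle with included angle $\pi - \theta$ and third side $\rho = |x-w|$, parameterised by $(\rho, \psi')$ with $\psi' \in (0,\theta)$ the angle opposite $s$. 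The Jacobian is $\rho/\sin\theta$ and $rs = \rho^2\sin\psi'\sin(\theta - \psi')/\sin^2\theta$. The elementary identity $\int_0^\theta \sin\psi'\sin(\theta - \psi')\,d\psi' = (\sin\theta - \theta\cos\theta)/2$ then yields
\[
J(\alpha) = \left(\int_0^\infty r^3\,\check\sigma(r)^2\,dr\right)\int_0^\alpha (\alpha - \theta)\,\frac{\sin\theta - \theta\cos\theta}{\sin^3\theta}\,d\theta.
\]

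The remaining $\theta$-integral is evaluated using the antiderivative $F(\theta) := -\tfrac12\cot\theta + \tfrac{\theta}{2\sin^2\theta}$ of $(\sin\theta - \theta\cos\theta)/\sin^3\theta$, which extends continuously to $\theta = 0$ with $F(0) = 0$ since the two $1/\theta$-singularities cancel. Integrating by parts (both boundary terms vanish) and then applying a further standard integration by parts to $\int \theta\csc^2\theta\,d\theta$ gives $\int_0^\alpha (\alpha - \theta)F'(\theta)\,d\theta = (1 - \alpha\cot\alpha)/2$, so $J(\alpha) = \tfrac12(1 - \alpha\cot\alpha)\,I$ with $I := \int_0^\infty r^3\check\sigma(r)^2\,dr$. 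The convex case of the lemma is then immediate, $b_0(X) = J(\gamma_X)$. For the concave case one has $b_0(X) = -J(2\pi - \gamma_X) + 2J(\gamma_X - \pi)$, and a short algebraic simplification using $\cot(2\pi - \gamma) = -\cot\gamma$ and $\cot(\gamma - \pi) = \cot\gamma$ collapses this to the same expression $\tfrac12(1 - \gamma_X\cot\gamma_X)\,I$. The only mildly delicate step is the cancellation of the two apparent singularities of $F$ at $\theta = 0$; everything else is mechanical trigonometry.
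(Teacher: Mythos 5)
Your proposal is correct, and the overall strategy agrees with the paper's: in both cases the coefficient $b_0(X)$ is reduced to a (signed) combination of double integrals of $\check\sigma(x-w)^2$ over pairs of opposite convex sectors $D,-D$, and the concave case is handled by the very same algebraic reduction (the three terms of \eqref{eq:b0Xconcave} produce exactly $-\!\int_{C_0}\!\int_{-C_0} + 2\!\int_{C_1}\!\int_{-C_1}$, where $C_0, C_1$ are convex sectors of angles $2\pi-\gamma_X$ and $\gamma_X-\pi$, and the $\cot$ identities collapse the answer). Where you diverge is in the evaluation of
\[
J(\alpha)=\int_D\int_{-D}\check\sigma(x-w)^2\,dw\,dx .
\]
The paper first applies Fubini once with the substitution $y=x-w$ to obtain $J(\alpha)=\int_D\check\sigma(y)^2\,\bigl|(y-D)\cap D\bigr|\,dy$, computes the parallelogram area $\lvert (y-D)\cap D\rvert = y_1y_2-\cot(\alpha)\,y_2^2$ (after rotating $D$ to $\{\theta\in[0,\alpha]\}$), and then finishes with a single elementary polar integral. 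You instead parameterize \emph{both} $x$ and $w$ in polar coordinates, pass to the difference angle $\theta=\phi-\psi$, and then perform the law-of-sines change of variables $(r,s)\mapsto(\rho,\psi')$ to factor out $\int_0^\infty r^3\check\sigma(r)^2\,dr$, leaving the trigonometric integral $\int_0^\alpha(\alpha-\theta)\frac{\sin\theta-\theta\cos\theta}{\sin^3\theta}\,d\theta$, which you evaluate by two integrations by parts. I checked all steps — the Jacobian $\rho/\sin\theta$, the identity $\int_0^\theta\sin\psi'\sin(\theta-\psi')\,d\psi' = \tfrac12(\sin\theta-\theta\cos\theta)$, the antiderivative $F$, and the cancellation of the $1/\theta$ singularities at $0$ — and they are correct. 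The paper's single-Fubini approach is shorter and avoids the double polar parameterization, but your version makes the radial symmetry and the appearance of the angular weight $\alpha-\lvert\theta\rvert$ more transparent; both routes ultimately rest on the same $\tfrac12(1-\alpha\cot\alpha)$ identity. One remark: both your write-up and the paper's silently identify $\tr$ with integration of the continuous diagonal kernel; this is justified by the trace-class statements from Section 3 and continuity of the kernel (Lemma \ref{lem:kernelestimates} together with \cite[Thm.~3.5]{Brislawn1988}), which the paper invokes for the analogous step in Lemma \ref{lem:notrectangularpart}.
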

\begin{proof}
Fix $X\in\Xi(P)$ and omit the subscript or argument ``$(X)$'' for the duration of the proof. As usual, we treat the cases of convex and concave corners separately.

First, let $X\in\Xi_<(P)$. Then, due to the radial symmetry of $\sigma$, we may assume that
\begin{align}\label{eq:Cstartsfromy2axis}
C=\lbrace (r\cos(\theta),r\sin(\theta)):\ r\geq 0,\  \theta\in [0,\gamma]\rbrace,
\end{align}
with $\gamma\in (0,\pi)$. From \eqref{eq:b0Xconvex} one gets that
\begin{align}\label{eq:squarecomputedconvexcorner}
b_0&=\tr\big[\chi_{C} \big([A_{C}]^2-[A_{H^{(1)}}]^2-[A_{H^{(2)}}]^2+A^2\big)\big] =\tr\big(\chi_{C}A\chi_{-C}A\chi_C\big),
\end{align}
and evaluating the trace gives
\begin{align*}
b_0&=\int\limits_{C}dx \int\limits_{-C}dy\,\check{\sigma}(x-y)^2=\int\limits_C dx \!\int\limits_{x+C}\! dy\,\check{\sigma}(y)^2=\int\limits_C dy\, \check{\sigma}(y)^2 \left\vert{(y-C)\cap C}\right\vert.
\end{align*}
For the last equality we have used the fact that $x\in C$ and $y\in x+C$ is equivalent to $y\in C$ and $x\in (y-C)\cap C$. Applying \eqref{eq:Cstartsfromy2axis} and the assumption that $\gamma\in(0,\pi)$, one easily computes that, for $y\in C$,
\begin{align*}
\left\vert{(y-C)\cap C}\right\vert&=y_1y_2-\cot(\gamma)y_2^2.
\end{align*}
Hence, the radial symmetry of $\sigma$ yields
\begin{align*}
\int\limits_C dy\, \check{\sigma}(y)^2 \left\vert{(y-C)\cap C}\right\vert&=\int\limits_0^\infty dr\, r^3 \check{\sigma}(r)^2\int\limits_0^\gamma d\theta\, \cos(\theta)\sin(\theta)-\cot(\gamma)\sin^2(\theta) \nonumber\\
&=\frac{1-\gamma\cot(\gamma)}{2}\int\limits_0^\infty dr\, r^3 \check{\sigma}(r)^2,
\end{align*}
and the claim follows for $X\in\Xi_<(P)$.

Secondly, let $X\in\Xi_>(P)$. Then we get from \eqref{eq:b0Xconcave} that
\begin{align*}
b_0&=\tr\big[\chi_{H^{(1)}\cap H^{(2)}}\big([A_{C}]^2-A^2\big)\big]+\tr\big[\chi_{C\setminus H^{(1)}}\big([A_{C}]^2-[A_{H^{(2)}}]^2\big)\big]\nonumber\\[2ex]
&\ \ \ +\tr\big[\chi_{C\setminus H^{(2)}}\big([A_{C}]^2-[A_{H^{(1)}}]^2\big)\big]\nonumber\\
&=-\tr\big(\chi_{H^{(1)}\cap H^{(2)}}A\chi_{-H^{(1)}\cap H^{(2)}}A\chi_{H^{(1)}\cap H^{(2)}}\big)+\sum\limits_{j=1}^2\tr\big(\chi_{C\setminus H^{(j)}}A\chi_{-C\setminus H^{(j)}}A\chi_{C\setminus H^{(j)}}\big).
\end{align*}
Note that $H^{(1)}\cap H^{(2)}$ and $C\setminus H^{(j)}$, $j=1,2$, are convex sectors with interior angles $2\pi-\gamma$ and $\gamma-\pi$, respectively. Thus, the formulas \eqref{eq:squarecomputedconvexcorner} and \eqref{eq:b0Xexplicitconvcorners} for $X\in\Xi_<(P)$ yield
\begin{align*}
b_0&=\Big[-\frac{1-(2\pi-\gamma)\cot(2\pi-\gamma)}{2}+1-(\gamma-\pi)\cot(\gamma-\pi)\Big]\int\limits_0^\infty dr\,r^3 \check{\sigma}(r)^2\nonumber\\
&=\frac{1-\gamma\cot(\gamma)}{2}\int\limits_0^\infty dr\, r^3\check{\sigma}(r)^2.
\end{align*}
This finishes the proof of the lemma.
\end{proof}
Theorem \ref{thm:rotsymmasymptsquare} follows now from combining Lemmas~\ref{lem:halfspacesquared} and \ref{lem:cornersquare}. 
\begin{appendix}
\section*{Appendix}\label{sec:appendix}
\renewcommand{\theequation}{A.\arabic{equation}}
\renewcommand{\thetheorem}{A.\arabic{theorem}}
\setcounter{equation}{0}
\setcounter{theorem}{0}
The purpose of this appendix is to provide a proof of the following result.
\begin{lemma}\label{lem:appendix}
Suppose that $d\geq 2$ and let $\Omega\subset\mathbb{R}^d$ be a bounded set with smooth boundary. Moreover, assume that $\sigma\in\mathsf{W}^{\infty,1}(\mathbb{R}^d)$ and let $h(z)=z^2+bz$ for some $b\in\mathbb{C}$. Then the coefficient $\mathcal{B}_{d-2}=\mathcal{B}_{d-2}(\Omega,h,\sigma)$ in \eqref{eq:trasymptsmoothboundary} vanishes:
\begin{align*}
\mathcal{B}_{d-2}(\Omega,h,\sigma)=0.
\end{align*}
\end{lemma}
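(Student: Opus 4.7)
\textbf{Proof plan for Lemma~\ref{lem:appendix}.}
Since $\operatorname{tr} A_{\Omega_L}=L^d|\Omega|\check\sigma(0)$ is purely of order $L^d$, the linear piece $bz$ of $h$ only affects $\mathcal{B}_d$, so it suffices to treat $h(z)=z^2$. Setting $K(u):=\check\sigma(u)\check\sigma(-u)$ (which decays super-polynomially by Lemma~\ref{lem:sigma}), the kernel $A(x,y)=\check\sigma(x-y)$ gives
\[
\operatorname{tr}(A_{\Omega_L})^2=\int_{\Omega_L}\!\int_{\Omega_L}K(x-y)\,dx\,dy=L^d|\Omega|\!\int_{\mathbb{R}^d}K\,dz\;-\;\int_{\mathbb{R}^d}K(u)\,F_L(u)\,du,
\]
with $F_L(u):=|\Omega_L\setminus(\Omega_L+u)|=L^dF(u/L)$ and $F(v):=|\Omega\setminus(\Omega+v)|$. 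The first summand equals $L^d\mathcal{B}_d$, so my task is to extract the $L^{d-2}$ coefficient of the second.

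Using tubular coordinates $p=y-sn(y)$, $y\in\partial\Omega$, $s\geq 0$ (outward unit normal $n$), the condition $p-v\notin\Omega$ becomes $s\leq -v\cdot n(y)+\tfrac{1}{2}h_y(v_\tau,v_\tau)+O(|v_\tau|^3)$, where $v=v_\tau+(v\cdot n)n$ and $h_y$ is the second fundamental form. Combining with the Weyl Jacobian $\det(I-sS(y))=1-sH(y)+O(s^2)$ ($H=\operatorname{tr}h$, mean curvature) and separately bounding the boundary-layer region $\{y:|v\cdot n(y)|\lesssim|v|^2\}$ (whose contribution is $O(|v|^3)$), I would establish
\[
F(v)=G_1(v)+G_2(v)+O(|v|^3),\qquad G_k \text{ positively homogeneous of degree } k,
\]
with
\[
G_2(v)=\tfrac{1}{2}\int_{\{y\in\partial\Omega:\,v\cdot n(y)<0\}}\bigl[\,h_y(v_\tau,v_\tau)-(v\cdot n(y))^2\,H(y)\bigr]\,d\sigma(y).
\]
The super-polynomial decay of $K$ justifies integrating term by term: $\int KF_L\,du=L^{d-1}\!\int KG_1\,du+L^{d-2}\!\int KG_2\,du+O(L^{d-3})$, so in particular $\mathcal{B}_{d-2}=-\int_{\mathbb{R}^d}K(u)G_2(u)\,du$.

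It remains to prove $G_2\equiv 0$. The covariogram identity $|\Omega\cap(\Omega+v)|=|\Omega\cap(\Omega-v)|$ (from $x\mapsto x+v/2$) makes $F$ even in $v$, whence $G_2(v)=G_2(-v)$. On the other hand, the explicit formula gives
\[
G_2(v)+G_2(-v)=\tfrac{1}{2}\int_{\partial\Omega}\bigl[h(v_\tau,v_\tau)-(v\cdot n)^2H\bigr]\,d\sigma,
\]
which vanishes for every $v\in\mathbb{R}^d$ by a Minkowski-type identity. Indeed, applying the surface divergence theorem on the closed hypersurface $\partial\Omega$ to the tangent vector field $W:=(v\cdot n)v_\tau$, the identities $\operatorname{div}_{\partial\Omega}v_\tau=-(v\cdot n)H$ (a constant ambient field has vanishing surface divergence) and $\nabla^{\partial\Omega}(v\cdot n)=S(v_\tau)$ (Weingarten, with $S$ the shape operator) combine to $\operatorname{div}_{\partial\Omega}W=h(v_\tau,v_\tau)-(v\cdot n)^2H$, and the integral vanishes by Stokes. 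Combining evenness with this vanishing forces $2G_2(v)=0$, so $\mathcal{B}_{d-2}=0$. The principal technical obstacle is the rigorous derivation of the formula for $G_2$, specifically the control of the boundary-layer region where $v\cdot n$ changes sign and the leading-order sliver-width expansion degenerates; once that is in place, the geometric cancellation is a clean consequence of the classical Minkowski identity above.
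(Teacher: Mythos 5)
Your approach is correct but genuinely different from the paper's. The paper does not re-derive the geometric coefficient: it quotes Roccaforte's known formula \eqref{eq:Bd-2} for $\mathcal{B}_{d-2}$ (valid for general entire $h$), observes that for $h(z)=z^2+bz$ the second divided difference $\sum_{k=1}^3 h(\sigma_k)/\prod_{j\neq k}(\sigma_k-\sigma_j)$ is identically $1$ so that the symbol drops out of the triple $\xi$-integral, and then uses the unitarity (hence isometry) of the Hilbert transform to collapse the double principal-value integral into a single $\zeta$-integral, arriving at the expression \eqref{eq:Bd-2simplified}. It finishes by citing an identity from Roccaforte \cite[Eq. (4.16)]{Roccaforte1984}, namely $\int_{\partial\Omega}\bigl(L_x[w_{T_x},w_{T_x}]-H(x)(\nu_x\cdot w)^2\bigr)d\Sigma(x)=0$ for every fixed $w\in\mathbb{R}^d$. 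Your route instead computes $\tr(A_{\Omega_L})^2$ from scratch as $-\int K(u)F_L(u)\,du$ plus the bulk term, Taylor-expands the covariogram $F(v)=|\Omega\setminus(\Omega+v)|$ to second order in $v$ via tubular coordinates, identifies $\mathcal{B}_{d-2}=-\int K\,G_2$, and proves the vanishing of the relevant surface integral directly via the surface divergence theorem applied to $W=(v\cdot n)v_\tau$, which is precisely a self-contained proof of Roccaforte's Eq.~(4.16). Your version is more elementary and avoids invoking the general-$h$ formula \eqref{eq:Bd-2} entirely, at the cost of carrying out the second-order covariogram expansion rigorously -- which, as you correctly flag, is the nontrivial part (boundary-layer control where $v\cdot n$ is small, uniformity of the $O(|v|^3)$ remainder, and the splitting of the $u$-integral between $|u|\lesssim L$ where the Taylor bound applies and $|u|\gtrsim L$ where the super-polynomial decay of $K$ must be invoked). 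The paper sidesteps all of that by outsourcing it to \cite{Roccaforte1984}. Both proofs hinge on the same underlying Minkowski-type identity on $\partial\Omega$; you supply its short proof, the paper cites it.
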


For $\sigma$ and $\Omega$ as in the lemma and (general) entire test functions $h$ with $h(0)=0$, a formula for $\mathcal{B}_{d-2}$ is contained, for instance, in \cite{Roccaforte1984}. In order to write it down, we need to fix some notation. Let $d\Sigma$ denote the surface measure on $\partial\Omega$ and write $\nu_x$ for the inwards pointing unit normal vector at $x\in\partial\Omega$. Consider the canonical volume element $dX=d\Sigma d\overline{\xi}$ on $T^{\ast}(\partial\Omega)$ where $d\overline{\xi}$ is the Lebesgue measure on $\lbrace\nu_x\rbrace^{\perp}$. Moreover, let $L$ denote the second fundamental form on $\partial\Omega$ with respect to the unit normal $\nu$ and write $H$ for $(d-1)$ times the mean curvature on $\partial\Omega$. Finally, introduce for a vector $w\in\mathbb{R}^d$ its orthogonal projection $w_{T_x}=w_{T_x(\partial\Omega)}$ onto $T_x(\partial\Omega)=\lbrace \nu_x \rbrace^{\perp}$.

In view of \cite[Thm. 1.1]{Roccaforte1984} the coefficient $\mathcal{B}_{d-2}=\mathcal{B}_{d-2}(\Omega,h,\sigma)$ is given by
\begin{align}\label{eq:Bd-2}
&\mathcal{B}_{d-2}=-\frac{1}{2(2\pi)^{d+2}}\!\!\int\limits_{T^\ast(\partial\Omega)}\!\!\!\!dX\int\limits_{\mathbb{R}}d\xi_1\int\limits_{\mathbb{R}}\frac{d\xi_2}{\xi_1-\xi_2}\int\limits_{\mathbb{R}}\frac{d\xi_3}{\xi_1-\xi_3} \Bigg\lbrace\sum\limits_{k=1}^3\frac{h(\sigma(\overline{\xi}+\xi_k\nu))}{\prod\limits_{j\neq k}[\sigma(\overline{\xi}+\xi_k\nu)-\sigma(\overline{\xi}+\xi_j\nu)]}\Bigg\rbrace\nonumber\\
&\quad\times\Big\lbrace L\Big[(\nabla\sigma)_T(\overline{\xi}+\xi_2\nu),(\nabla\sigma)_T(\overline{\xi}+\xi_3\nu)\Big]-H\big[\nu\cdot(\nabla\sigma)(\overline{\xi}+\xi_2\nu)\big]\big[\nu\cdot(\nabla\sigma)(\overline{\xi}+\xi_3\nu)\big]\Big\rbrace,
\end{align}
where the integrals over $\xi_2$ and $\xi_3$ are interpreted as Cauchy principal values. Equipped with this formula, we are ready to prove the lemma.
\begin{proof}[Proof of Lemma \ref{lem:appendix}]
Note that, for the given function $h$, one has that
\begin{align*}
\sum\limits_{k=1}^3\frac{h(\sigma(\overline{\xi}+\xi_k\nu))}{\prod\limits_{j\neq k}[\sigma(\overline{\xi}+\xi_k\nu)-\sigma(\overline{\xi}+\xi_j\nu)]}=1,
\end{align*}
for all $\overline{\xi},\nu\in\mathbb{R}^d$ and $\xi_1,\xi_2,\xi_3\in\mathbb{R}$. Thus, as the Hilbert transform 
\begin{align*}
\mathsf{C}^{\infty}(\mathbb{R})\cap\mathsf{L}^2(\mathbb{R})\ni f\mapsto \tilde{f};\quad
\tilde{f}(t):=\frac{1}{\pi}\lim\limits_{\epsilon \searrow0}\int\limits_{|s-t|>\epsilon}\!\!\!\!ds \,\frac{f(s)}{t-s},
\end{align*}
extends to a unitary operator on $\mathsf{L}^2(\mathbb{R})$, the formula \eqref{eq:Bd-2} for $\mathcal{B}_{d-2}$ simplifies to 
\begin{align}\label{eq:Bd-2simplified}
-\frac{1}{8(2\pi)^d}\!\!\!\!\int\limits_{T^\ast(\partial\Omega)}\!\!\!\!\! dX\int\limits_{\mathbb{R}} d\zeta\, \Big\lbrace L\Big[(\nabla\sigma)_T(\overline{\xi}+\zeta\nu),(\nabla\sigma)_T(\overline{\xi}+\zeta\nu)\Big]-H\big[\nu\cdot(\nabla\sigma)(\overline{\xi}+\zeta\nu)\big]^2\Big\rbrace.
\end{align}
To see that this expression vanishes identically we repeat an argument from \cite[p. 600]{Roccaforte1984}. Writing out the volume element $dX=d\Sigma d\overline{\xi}$ and combining the $\overline{\xi}$- and $\zeta$-integration in \eqref{eq:Bd-2simplified}, one arrives at
\begin{align*}
\mathcal{B}_{d-2}=-\frac{1}{8(2\pi)^d}\!\!\int\limits_{\partial\Omega} d\Sigma(x)\int\limits_{\mathbb{R}^d} d\xi\,\Big\lbrace L_x\big[(\nabla\sigma)_T(\xi),(\nabla\sigma)_T(\xi)\Big]-H_x\big[\nu_x\cdot(\nabla\sigma)(\xi)\big]^2\Big\rbrace.
\end{align*}
Hence, the lemma follows from Fubini's theorem and the identity
\begin{align*}
\int\limits_{\partial\Omega}d\Sigma(x) \Big(L_x\big[w_{T_x},w_{T_x}\big]-H(x)[\nu_x\cdot w]^2 \Big)=0,
\end{align*}
which holds for any $w\in\mathbb{R}^d$, see \cite[Eq. (4.16)]{Roccaforte1984}.
\end{proof}
\end{appendix}

\Addresses
\end{document}